\newtheorem{thm}{Theorem}[section]
\newtheorem{lem}[thm]{Lemma}
\newtheorem{prop}[thm]{Proposition}
\newtheorem{cor}[thm]{Corollary}
\theoremstyle{definition}
\newtheorem{defn}[thm]{Definition}
\newtheorem*{defn*}{Definition}
\newtheorem*{claim}{Claim}
\theoremstyle{remark}
\newtheorem{rem}[thm]{Remark}
\newtheorem*{rem*}{Remark}
\numberwithin{equation}{section} 
\numberwithin{figure}{section}
\numberwithin{table}{section}
\renewcommand\subsubsection{\@startsection{subsubsection}{3}%
  \z@{.5\linespacing\@plus.7\linespacing}{-.5em}%
  {\normalfont\bfseries}}
\newcommand{\Vol}{\mathrm{Vol}}
\newcommand{\V}{\mathsf{V}}
\newcommand{\D}{\mathsf{D}}
\newcommand{\supp}{\mathop{\mathrm{supp}}}
\newcommand{\proj}{\bm{\mathsf{P}}}
\begin{document}

\title[Log-Brunn-Minkowski and Zonoids]{The local logarithmic Brunn-Minkowski 
inequality for zonoids}

\author{Ramon van Handel}
\address{Fine Hall 207, Princeton University, Princeton, NJ 
08544, USA}

\begin{abstract}
The aim of this note is to show that the local form of the 
logarithmic Brunn-Minkowski conjecture holds for zonoids. The proof uses
a variant of the Bochner method due to Shenfeld and the author.
\end{abstract}

\subjclass[2010]{52A39; 
                 52A40} 

\keywords{Logarithmic Brunn-Minkowski inequality; zonoids; 
mixed volumes; Bochner method}

\maketitle

\thispagestyle{empty}

\section{Introduction}
\label{sec:intro}

\subsection{}

The classical Brunn-Minkowski inequality states that
\begin{equation}
\label{eq:bm}
	\Vol((1-t)K+tL)^{1/n} \ge
	(1-t)\,\Vol(K)^{1/n}+t\,\Vol(L)^{1/n}
\end{equation}
for all $t\in[0,1]$ and convex bodies $K,L$ in $\mathbb{R}^n$, where
$$
	aK+bL:=\{ax+by:x\in K,y\in L\}
$$
denotes Minkowski addition. Its importance, both to convexity and to other 
areas of mathematics, can hardly be overstated; cf.\ \cite{Gar02}.
As is well known, \eqref{eq:bm} is equivalent to the apparently weaker 
inequality
\begin{equation}
\label{eq:bmgm}
	\Vol((1-t)K+tL) \ge
	\Vol(K)^{1-t}\,\Vol(L)^{t}
\end{equation}
where the arithmetic mean on the right-hand side has been replaced by the 
geometric mean. Clearly \eqref{eq:bm} implies \eqref{eq:bmgm}, as 
the geometric mean is smaller than the arithmetic mean; the converse 
implication follows by rescaling $K,L$ \cite[\S 4]{Gar02}.

As part of their study of the Minkowski problem for cone volume measures, 
B\"or\"oczky, Lutwak, Yang and Zhang \cite{BLYZ12} asked whether one could
replace also the ``arithmetic mean'' $(1-t)K+tL$ on the left-hand side of 
the Brunn-Minkowski inequality by a certain kind of ``geometric mean'': 
that is, whether
\begin{equation}
\label{eq:logbm}
	\Vol(K^{1-t}L^t)\mathop{\stackrel{?}{\ge}} 
	\Vol(K)^{1-t}\,\Vol(L)^t,
\end{equation}
where the meaning of $K^{1-t}L^t$ must be carefully defined (see
\eqref{eq:logplus} below). As the 
geometric mean is smaller than the arithmetic mean, this would yield an 
improvement of the classical Brunn-Minkowski inequality.
While such an improved inequality 
turns out to be false for general convex bodies, it was conjectured in 
\cite{BLYZ12} that such an improved inequality holds whenever $K,L$ are 
symmetric convex bodies (that is, $K=-K$ and $L=-L$), which they proved to 
be true in dimension 2. In higher dimensions, this \emph{logarithmic 
Brunn-Minkowski conjecture} remains open.

\subsection{}

It is readily seen that the Brunn-Minkowski inequality \eqref{eq:bm}
and the logarithmic Brunn-Minkowski conjecture \eqref{eq:logbm} 
are equivalent to concavity of the functions
$$
	\varphi: t\mapsto \Vol((1-t)K+tL)^{1/n}\qquad
	\mbox{and}\qquad
	\psi:t\mapsto \log \Vol(K^{1-t}L^t)
$$
for all convex bodies $K,L$ and symmetric convex bodies $K,L$
in $\mathbb{R}^n$, respectively. We can therefore obtain equivalent 
formulations of \eqref{eq:bm} and \eqref{eq:logbm} by considering the 
first- and second-order conditions for concavity of $\varphi$ and $\psi$.

In order to formulate the resulting inequalities, we must first recall 
some additional notions (we refer to \cite{Sch14} for a 
detailed treatment). It was shown by Minkowski 
that the volume of convex bodies is a polynomial in the sense that for any
convex bodies $K_1,\ldots,K_m$ in $\mathbb{R}^n$ and 
$\lambda_1,\ldots,\lambda_m>0$, we have
$$
        \Vol(\lambda_1K_1+\cdots+\lambda_m K_m)
        = \sum_{i_1,\ldots,i_n=1}^m \V(K_{i_1},\ldots,K_{i_n})\,
        \lambda_{i_1}\cdots\lambda_{i_n}.
$$
The coefficients $\V(K_1,\ldots,K_n)$, called \emph{mixed volumes}, are 
nonnegative, symmetric in their arguments, and homogeneous and
additive in each argument under Minkowski addition.
Moreover, mixed volumes admit the integral representation
\begin{equation}
\label{eq:mixvolarea}
	\V(K_1,\ldots,K_n) = \frac{1}{n}\int h_{K_1}
	dS_{K_2,\ldots,K_n},
\end{equation}
where the \emph{mixed area measure} $S_{K_2,\ldots,K_n}$ is a finite 
measure on $S^{n-1}$ and $h_K(x):=\sup_{z\in K}\langle z,x\rangle$ denotes 
the support function of a convex body $K$.

In view of the above definitions, it is now straightforward to obtain 
equivalent formulations of the Brunn-Minkowski inequality in terms of 
mixed volumes; see, e.g., \cite[pp.\ 381--382 and 406]{Sch14}. In the 
sequel, we denote by $\mathcal{K}^n$ ($\mathcal{K}_s^n$) the family of all 
(symmetric) convex bodies in $\mathbb{R}^n$ with nonempty interior.

\begin{lem}[Minkowski]
\label{lem:bmequiv}
The following are equivalent:
\begin{enumerate}[1.]
\itemsep\abovedisplayskip
\item For all $K,L\in\mathcal{K}^n$ and $t\in[0,1]$, the 
Brunn-Minkowski inequality \eqref{eq:bm} holds.
\item For all $K\in\mathcal{K}^n$, we have
\begin{equation}
\label{eq:minkfirst}
	\V(L,K,\ldots,K) \ge \Vol(L)^{1/n}\,\Vol(K)^{1-1/n}\quad
	\forall\,L\in\mathcal{K}^n.
\end{equation}
\item For all $K\in\mathcal{K}^n$, we have
\begin{equation}
\label{eq:minksecond}
	\V(L,K,\ldots,K)^2 \ge \V(L,L,K,\ldots,K)\,\Vol(K) \quad
	\forall\,L\in\mathcal{K}^n.
\end{equation}
\end{enumerate}
\end{lem}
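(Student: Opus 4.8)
The plan is to derive all three statements from the concavity of the one‑variable function
$\varphi_{K,L}(t):=\Vol((1-t)K+tL)^{1/n}$ on $[0,1]$, whose derivatives at the endpoints are expressible in mixed volumes via Minkowski's polynomiality. First I would record that $P_{K,L}(t):=\Vol((1-t)K+tL)$ is, by Minkowski's theorem, a polynomial in $t$ whose expansion involves the mixed volumes $\V(K,\dots,K,L,\dots,L)$; since $(1-t)K+tL\in\mathcal K^n$ for $t\in[0,1]$ we have $P_{K,L}>0$ on $[0,1]$, so $\varphi_{K,L}=P_{K,L}^{1/n}$ is smooth on a neighbourhood of $[0,1]$. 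Note that \eqref{eq:bm} for a \emph{fixed} pair $K,L$ and all $t$ says precisely that the chord of $\varphi_{K,L}$ joining $t=0$ to $t=1$ lies below the graph.

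Next I would invoke the elementary reparametrization identity: for $0\le a<b\le1$, the restriction of $\varphi_{K,L}$ to $[a,b]$ equals, after the increasing affine substitution $t\mapsto(t-a)/(b-a)$, the function $\varphi_{K_0,L_0}$ on $[0,1]$ with $K_0:=(1-a)K+aL$ and $L_0:=(1-b)K+bL$ (both in $\mathcal K^n$); indeed $(1-u)K_0+uL_0=(1-c)K+cL$ with $c=(1-u)a+ub$. This has two consequences. First, statement (1) \emph{for all} $K,L\in\mathcal K^n$ is equivalent to concavity of $\varphi_{K,L}$ on $[0,1]$ for all $K,L$: the ``chord below graph'' condition on an arbitrary subinterval of $[0,1]$ is exactly the endpoint condition for the corresponding pair $K_0,L_0$. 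Second, a pointwise condition at $t=0$ quantified over all bodies propagates to the whole interval: taking $b=1$ (so $L_0=L$) and $a=s$ gives $\varphi_{K,L}''(s)=(1-s)^{-2}\varphi_{K_0,L}''(0)$, so ``$\varphi_{K,L}''(0)\le0$ for all $K,L$'' implies ``$\varphi_{K,L}''\le0$ on $[0,1]$ for all $K,L$'', i.e.\ concavity for all $K,L$; likewise the first‑order condition ``$\varphi_{K,L}(1)\le\varphi_{K,L}(0)+\varphi_{K,L}'(0)$ for all $K,L$'' propagates, via the subinterval $[s,b]$, to ``every tangent line of $\varphi_{K,L}$ lies above the graph to its right, for all $K,L$'', which by a second‑order Taylor expansion again forces $\varphi_{K,L}''\le0$. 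Hence (1) is equivalent to each of these two pointwise‑at‑$0$ conditions, once quantified over all $K,L$.

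It then remains to carry out the derivative computations. Differentiating the polynomial $P_{K,L}$ and keeping the surviving terms gives $P_{K,L}(0)=\Vol(K)$, $P_{K,L}'(0)=n\bigl(\V(L,K,\dots,K)-\Vol(K)\bigr)$, and $P_{K,L}''(0)=n(n-1)\bigl(\Vol(K)-2\,\V(L,K,\dots,K)+\V(L,L,K,\dots,K)\bigr)$. Since $\varphi_{K,L}'(0)=\tfrac1n\Vol(K)^{1/n-1}P_{K,L}'(0)$ and $\varphi_{K,L}(0)=\Vol(K)^{1/n}$, $\varphi_{K,L}(1)=\Vol(L)^{1/n}$, the first‑order condition reads $\Vol(L)^{1/n}\le\Vol(K)^{1/n-1}\,\V(L,K,\dots,K)$, which is exactly \eqref{eq:minkfirst}; and since $\varphi_{K,L}''=\tfrac1{n^2}P_{K,L}^{1/n-2}\bigl(nP_{K,L}P_{K,L}''-(n-1)(P_{K,L}')^2\bigr)$, the inequality $\varphi_{K,L}''(0)\le0$ simplifies (cancelling and dividing by $n^2(n-1)$) to $\V(L,K,\dots,K)^2\ge\Vol(K)\,\V(L,L,K,\dots,K)$, which is exactly \eqref{eq:minksecond}. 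Assembling the chain of equivalences yields $(1)\Leftrightarrow(2)\Leftrightarrow(3)$. I expect the only points requiring care to be the bookkeeping in computing $P_{K,L}''(0)$ and the observation — genuinely needed here — that the per‑pair versions of \eqref{eq:minkfirst} and \eqref{eq:minksecond} are strictly weaker than concavity, so the quantifier over all $K,L$ together with the reparametrization is what closes the loop; the case $n=1$, where $\mathcal K^1$ consists of intervals and \eqref{eq:minksecond} degenerates, should be set aside as trivial.
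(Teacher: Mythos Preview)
Your proposal is correct and follows essentially the same approach as the paper: the paper's proof simply observes that substituting $K\leftarrow(1-s)K+sL$, $L\leftarrow(1-r)K+rL$ turns \eqref{eq:minkfirst} into the first-order concavity condition $\varphi(r)\le\varphi(s)+\varphi'(s)(r-s)$ and \eqref{eq:minksecond} into $\varphi''(s)\le0$, which is precisely your reparametrization argument. You have filled in the derivative computations and the logical bookkeeping that the paper leaves as ``a simple computation.''
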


\begin{proof}
If we apply 
\eqref{eq:minkfirst}--\eqref{eq:minksecond} with $K\leftarrow (1-s)K+sL$ 
and $L\leftarrow (1-r)K+rL$, then a simple computation shows that 
Minkowski's first inequality 
\eqref{eq:minkfirst} is nothing other than the first-order concavity 
condition $\varphi(r)\le \varphi(s)+\varphi'(s)(r-s)$, while Minkowski's 
second inequality \eqref{eq:minksecond} is the second-order condition 
$\varphi''(s)\le 0$.
\end{proof}

Before we state an analogous reformulation of \eqref{eq:logbm}, we must 
first  give a precise definition of $K^{1-t}L^t$. To motivate this 
definition, recall that the 
arithmetic mean of convex bodies is characterized by its support function
$h_{(1-t)K+tL}=(1-t)h_K+th_L$. We may therefore attempt to 
define $K^{1-t}L^t$ as the convex body whose support function is 
the geometric mean $h_K^{1-t}h_L^t$. However, the latter 
need not be the support function of any convex body. We therefore 
define $K^{1-t}L^t$ in general as the largest convex body whose support 
function is dominated by $h_K^{1-t}h_L^t$, that is,
\begin{equation}
\label{eq:logplus}
	K^{1-t}L^t := \{ z\in \mathbb{R}^n:
	\langle z,x\rangle \le h_K(x)^{1-t}h_L(x)^t\mbox{ for all }
	x\in\mathbb{R}^n \}.
\end{equation}
We can now formulate the following analogue of Lemma \ref{lem:bmequiv}.

\begin{thm}[\cite{BLYZ12,CLM17,KM17,CHLL20,Put21,Mil21}]
\label{thm:logequiv}
The following are equivalent:
\begin{enumerate}[1.]
\itemsep\abovedisplayskip
\item For all $K,L\in\mathcal{K}^n_s$ and $t\in[0,1]$, the 
log-Brunn-Minkowski inequality \eqref{eq:logbm} holds.
\item For all $K\in\mathcal{K}^n_s$, we have
\begin{equation}
\label{eq:logmink}
	\int h_K\log\bigg(\frac{h_L}{h_K}\bigg) dS_{K,\ldots,K}
	\ge 
	\Vol(K)\log\bigg(\frac{\Vol(L)}{\Vol(K)}\bigg)
	\quad
	\forall\, L\in\mathcal{K}^n_s.
\end{equation}
\item For all $K\in\mathcal{K}^n_s$, we have
\begin{equation}
\label{eq:locallogbm}
	\frac{\V(L,K,\ldots,K)^2}{\Vol(K)} \ge 
	\frac{n-1}{n}\,
	\V(L,L,K,\ldots,K)
	+
	\frac{1}{n^2} \int \frac{h_L^2}{h_K}\,dS_{K,\ldots,K}
	\quad
	\forall\,L\in\mathcal{K}^n_s.
\end{equation}
\end{enumerate}
\end{thm}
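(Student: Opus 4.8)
The plan is to establish Theorem \ref{thm:logequiv} by the same first-order/second-order concavity argument that proves Lemma \ref{lem:bmequiv}, applied now to the function $\psi(t) = \log\Vol(K^{1-t}L^t)$ rather than $\varphi$. The cycle of implications (1)$\Rightarrow$(2)$\Rightarrow$(3)$\Rightarrow$(1) is the natural route: (1) says $\psi$ is concave on $[0,1]$ for all symmetric $K,L$; (2) is the first-order condition $\psi(0) \le \psi(1) - \psi'(0^+)$ (equivalently $\psi(t) \le \psi(s) + \psi'(s)(t-s)$ after the substitution $K \leftarrow K^{1-s}L^s$, using that geometric interpolation is itself "geodesic" so that iterating the construction is consistent); and (3) is the second-order condition $\psi''(0) \le 0$, again with $K$ replaced by an arbitrary interpolant to get the statement for all $K$.

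**The analytic core** is to compute $\psi'(0^+)$ and $\psi''(0^+)$ explicitly. First I would record that for small $t$ one has $h_{K^{1-t}L^t} = h_K^{1-t}h_L^t = h_K \exp(t\log(h_L/h_K))$ at least $S_{K,\ldots,K}$-a.e.\ on $S^{n-1}$ — this is the delicate point, since \eqref{eq:logplus} only guarantees $h_{K^{1-t}L^t} \le h_K^{1-t}h_L^t$ with possible strict inequality on a set of directions, but that set does not carry mixed-area mass in the relevant first- and second-order expansions (this is exactly the kind of regularity lemma assembled from \cite{CLM17,KM17,CHLL20,Put21,Mil21}). Granting this, I expand $\Vol(K^{1-t}L^t)$ using Minkowski's polynomial expansion of volume together with the multilinearity of mixed volumes and the integral representation \eqref{eq:mixvolarea}. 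Writing $f = \log(h_L/h_K)$, the first variation of $h$ is $h_K f$ and the pointwise second variation is $h_K f^2$; feeding these through $\Vol$ gives
\begin{equation}
\label{eq:psider}
	\psi'(0) = \frac{n\,\V(h_K f, K,\ldots,K)}{\Vol(K)},
	\qquad
	\psi''(0) = \frac{n\,\V(h_K f^2,K,\ldots,K) + n(n-1)\V(h_Kf,h_Kf,K,\ldots,K)}{\Vol(K)} - \psi'(0)^2,
\end{equation}
where $\V(g,K,\ldots,K) := \tfrac1n\int g\,dS_{K,\ldots,K}$ denotes the (formal) mixed volume with a general function in the first slot, extended by linearity. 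The concavity condition $\psi''(0)\le 0$ then reads
\[
	n\,\V(h_K f^2,K,\ldots,K) + n(n-1)\V(h_Kf,h_Kf,K,\ldots,K) \le \frac{n^2\,\V(h_Kf,K,\ldots,K)^2}{\Vol(K)}.
\]

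**The final bookkeeping step** is to substitute $f = \log(h_L/h_K)$ and $h_K f = $ (the relevant first-variation field) back in, and recognize that the left-hand and right-hand sides are precisely the two sides of \eqref{eq:logmink} and \eqref{eq:locallogbm} after clearing the factor $1/\Vol(K)$ and reorganizing. For the equivalence of the first-order condition with \eqref{eq:logmink} one uses that $n\,\V(h_K f,K,\ldots,K) = \int h_K \log(h_L/h_K)\,dS_{K,\ldots,K}$ on the left and $\psi(1)-\psi(0) = \log(\Vol(L)/\Vol(K))$ on the right. For the second-order condition, the term $n\,\V(h_Kf^2,K,\ldots,K) = \int h_L^2/h_K \cdot (\text{something})$ requires a short manipulation: one writes $h_Kf^2$ in terms of $h_L$ by a Taylor/algebraic identity and absorbs the cross term into $\V(L,L,K,\ldots,K)$, which is where the coefficients $\frac{n-1}{n}$ and $\frac{1}{n^2}$ in \eqref{eq:locallogbm} come from. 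To close the loop, (1)$\Rightarrow$(2) is the statement that a concave function lies below its tangent line; (2)$\Rightarrow$(3) is immediate since a localizable first-order condition holding at every basepoint forces the second derivative to be nonpositive; and (3)$\Rightarrow$(1) follows because $\psi''(s)\le 0$ at every $s\in(0,1)$ (obtained by applying (3) with $K\leftarrow K^{1-s}L^s$, which is again symmetric) gives concavity of $\psi$.

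**The main obstacle** I anticipate is not the combinatorial expansion \eqref{eq:psider} — that is routine multilinear algebra of mixed volumes — but rather the regularity issue flagged above: justifying that $h_{K^{1-t}L^t}$ agrees with $h_K^{1-t}h_L^t$ to second order in $t$ in the sense that matters for differentiating $\Vol(K^{1-t}L^t)$, and that the resulting derivatives can legitimately be written as mixed volumes / integrals against $S_{K,\ldots,K}$. This is precisely why the theorem is attributed to a list of papers rather than a one-line computation; the plan is to invoke those results (the "Aleksandrov-type" variational formulas for volume along the logarithmic interpolation, together with the density/approximation arguments needed to pass between smooth bodies and general symmetric convex bodies) as the technical input, and then the equivalences fall out by the concavity dictionary above.
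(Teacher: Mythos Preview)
Your plan matches the paper's own treatment closely. The paper does not actually prove Theorem~\ref{thm:logequiv}: it cites the result from \cite{BLYZ12,CLM17,KM17,CHLL20,Put21,Mil21}, and the surrounding text says exactly what you say --- that the heuristic is the first- and second-order concavity conditions for $\psi(t)=\log\Vol(K^{1-t}L^t)$, and that the genuine difficulty is the possible failure of $h_{K^{1-t}L^t}=h_K^{1-t}h_L^t$, which is handled by the cited papers. So both of you defer the hard analytic step to the same literature.

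Two small points of sloppiness in your write-up are worth flagging. First, the direction of your first-order condition is reversed: concavity gives $\psi(1)\le\psi(0)+\psi'(0)$, i.e.\ $\psi'(0)\ge\psi(1)-\psi(0)$, which is \eqref{eq:logmink}; what you wrote is the opposite inequality. Second, your ``algebraic identity'' step converting $\psi''(0)\le 0$ into \eqref{eq:locallogbm} is not an identity between the two inequalities for a fixed $L$. The correct picture is cleaner: writing $\tilde g=h_Kf$, the relation $h_Kf^2=\tilde g^2/h_K$ turns $\psi''(0)\le 0$ into
\[
\frac{\V(\tilde g,K,\ldots,K)^2}{\Vol(K)}\ \ge\ \frac{n-1}{n}\,\V(\tilde g,\tilde g,K,\ldots,K)+\frac{1}{n^2}\int\frac{\tilde g^2}{h_K}\,dS_{K,\ldots,K},
\]
which is \eqref{eq:locallogbm} with $h_L$ replaced by $\tilde g=h_K\log(h_L/h_K)$. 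These are different inequalities for a given $L$, but the underlying quadratic form is identical, so quantified over all $L\in\mathcal{K}_s^n$ (equivalently over all even test functions) they coincide. There is no ``cross term to absorb''. Finally, note that the geodesic property you invoke for $(2)\Rightarrow(3)$ and $(3)\Rightarrow(1)$ --- that $(K^{1-s}L^s)^{1-\tau}(K^{1-r}L^r)^\tau$ again lies on the curve $K^{1-\cdot}L^\cdot$ --- is itself part of the regularity obstacle when the Wulff-shape definition \eqref{eq:logplus} is in force, not a free formal fact.
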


The difficulty in the proof of Theorem \ref{thm:logequiv} is that the map 
$t\mapsto K^{1-t}L^t$ can be nonsmooth: if it were the case that 
$h_{K^{1-t}L^t}=h_K^{1-t}h_L^t$ for all $t\in[0,1]$, the result would 
follow easily from the first- and second-order conditions for concavity of 
$\psi$. That the conclusion remains valid using the correct definition 
\eqref{eq:logplus} is a nontrivial fact that has been established through 
the combined efforts of several groups.

\begin{rem}
The notation \eqref{eq:logplus} is nonstandard: $K^{1-t}L^t$ is 
often denoted in the literature as $(1-t)K+_0 tL$, as it coincides with
the $q\to 0$ limit of $L^q$-Minkowski addition. As the latter notation is 
somewhat confusing (the geometric mean is not defined by the rescaled 
bodies $(1-t)K$ and $tL$), and as only geometric means are used in this 
paper, we have chosen a nonstandard but more suggestive notation.
\end{rem}

\subsection{}   

It was shown in \cite{BLYZ12} that the logarithmic Brunn-Minkowski 
conjecture holds in dimension $n=2$. In dimensions $n\ge 3$, however, the 
conjecture has been proved to date only under special symmetry 
assumptions: when $K,L$ are complex \cite{Rot14} or unconditional 
\cite{Sar15} bodies (see also \cite{BK20} for a generalization). In both 
cases the conjecture is established by replacing the geometric mean 
\eqref{eq:logplus} by a smaller set whose construction requires the 
special symmetries, which yields strictly stronger inequalities than are 
conjectured for general bodies.

Even for a fixed reference body $K$, the validity of the inequalities 
\eqref{eq:logmink} and \eqref{eq:locallogbm} for \emph{all} 
$L\in\mathcal{K}_s^n$ (i.e., in the absence of additional symmetries) 
appears to be unknown except in one very special family of 
examples: it follows from \cite{KM17,Mil21} that \eqref{eq:logmink} and 
\eqref{eq:locallogbm} hold when $K$ is the $\ell_p^n$-ball with $2\le 
p<\infty$ and sufficiently large $n$, as well as for affine images and 
sufficiently small perturbations of these bodies. Note, however, 
that the analysis of these examples shows that they satisfy even stronger 
inequalities that cannot hold for general bodies (local 
$L^q$-Brunn-Minkowski inequalities with 
$q=-\frac{1}{4}$ \cite[Theorem 10.4]{KM17}), so that they do not 
approach the extreme cases of the logarithmic Brunn-Minkowski 
conjecture.\footnote{%
	For one extreme case, the $\ell_\infty^n$-ball, the validity 
	of \eqref{eq:locallogbm} may be verified by an explicit 
	computation, see, e.g., \cite[Theorem 10.2]{KM17}. This does not 
	follow as a limiting case of the general result 
	\cite[Theorem 10.4]{KM17} on $\ell_p^n$-balls, however, as the 
	latter only holds for $n\ge n_0(p)\to\infty$ as 
	$p\to\infty$.
}

The aim of this note is to contribute some further evidence toward the 
validity of the logarithmic Brunn-Minkowski conjecture. Recall that a 
convex body $K\in\mathcal{K}_s^n$ is called a \emph{zonoid} if it is the 
limit of Minkowski sums of segments. The first main 
result of this note is the following theorem.

\begin{thm}
\label{thm:main}
Let $K\in\mathcal{K}_s^n$ be a zonoid. Then the local logarithmic 
Brunn-Minkowski inequality \eqref{eq:locallogbm}
holds for all $L\in\mathcal{K}_s^n$.
\end{thm}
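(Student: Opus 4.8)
The plan is to reduce the inequality \eqref{eq:locallogbm} to a weighted Poincar\'e-type inequality for the second-order elliptic operator on $S^{n-1}$ attached to $K$, and then to prove that inequality by a Bochner-type argument exploiting the representation of a zonoid as a superposition of segments. First I would pass to a regular situation: the two sides of \eqref{eq:locallogbm} depend continuously on $K$ (with $L$ fixed) and on $L$, and every zonoid is a Hausdorff limit of zonoids whose generating measure has a smooth positive density (obtained by mollifying the generating measure on $S^{n-1}$); such zonoids have $C^\infty$ support functions of positive Gauss curvature, so it suffices to treat that case. Next I would complete the square: setting $c:=\V(L,K,\ldots,K)/\Vol(K)$ and $f:=h_L-c\,h_K$, so that $\int f\,dS_{K,\ldots,K}=0$, a direct computation using $\int h_K\,dS_{K,\ldots,K}=n\Vol(K)$, the multilinearity of mixed volumes, and $\V(f,K,\ldots,K)=0$ shows that \eqref{eq:locallogbm} is equivalent to
\begin{equation}
\label{eq:star}
	-\V(f,f,K,\ldots,K)\ \ge\ \frac{1}{n(n-1)}\int\frac{f^2}{h_K}\,dS_{K,\ldots,K}.
\end{equation}
For any smooth even $f$ with $\int f\,dS_{K,\ldots,K}=0$, the function $c\,h_K+f$ is, for $c$ large, the support function of a symmetric smooth body $L$ with $\V(L,K,\ldots,K)/\Vol(K)=c$, so it is enough to prove \eqref{eq:star} for all smooth \emph{even} $f$ on $S^{n-1}$ with $\int f\,dS_{K,\ldots,K}=0$. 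The parity of $f$ is essential and is exactly where the symmetry hypothesis is used: for $f(x)=\langle a,x\rangle$ one has $\V(f,f,K,\ldots,K)=0$ while the right-hand side of \eqref{eq:star} is strictly positive.

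I would then pass to operator form. For smooth $K$ the mixed area measure $S_{L,K,\ldots,K}$ (with $L$ once and $K$ repeated $n-2$ times) has density $\tfrac1{n-1}\,\mathrm{tr}\big(C_K(\nabla^2 h_L+h_L\,\mathrm{Id})\big)$ with respect to spherical Lebesgue measure $\sigma$, where $\nabla^2$ is the covariant Hessian on $S^{n-1}$, $C_K$ is the symmetric positive-definite divergence-free cofactor matrix field of $A_K:=\nabla^2 h_K+h_K\,\mathrm{Id}$, and $dS_{K,\ldots,K}=\det A_K\,d\sigma$. Hence $(f,g)\mapsto -\V(f,g,K,\ldots,K)$ is, up to the factor $\tfrac1{n(n-1)}$, the Dirichlet form of the self-adjoint operator $f\mapsto-\mathrm{tr}\big(C_K(\nabla^2 f+f\,\mathrm{Id})\big)$, and an integration by parts (using $\mathrm{div}\,C_K=0$) recasts \eqref{eq:star} as
\begin{equation}
\label{eq:poincare}
	\int\langle C_K\nabla f,\nabla f\rangle\,d\sigma\ \ge\ \int f^2\,\mathrm{tr}\,C_K\,d\sigma\ +\ \int\frac{f^2}{h_K}\,dS_{K,\ldots,K}
\end{equation}
for all smooth even $f$ with $\int f\,dS_{K,\ldots,K}=0$. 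Minkowski's second inequality \eqref{eq:minksecond}, a special case of the Alexandrov-Fenchel inequality, is exactly \eqref{eq:poincare} with the last term deleted, and equality holds there precisely for linear $f$, which parity excludes. So the task is to produce the sharp additional lower bound $\int f^2\,dS_{K,\ldots,K}/h_K$ for the Alexandrov-Fenchel deficit on the subspace of even functions.

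Here the zonoid structure enters. Writing $h_K(x)=\int_{S^{n-1}}|\langle x,v\rangle|\,d\mu(v)$ with $\mu$ even, the matrix field $A_K$ (and, through the multilinearity of mixed area measures, also the form $\V(f,g,K,\ldots,K)$ and the curvature term appearing in the Bochner identity below) decompose as superpositions over the generating directions: fixing $n-2$ of them, say $v_1,\dots,v_{n-2}$, reduces matters on the great circle orthogonal to $\mathrm{span}(v_1,\dots,v_{n-2})$ to a two-dimensional problem involving the planar projection of $L$. Following the variant of the Bochner method of Shenfeld and the author, I would establish a Bochner-Weitzenb\"ock-type identity for the operator $\mathrm{tr}\big(C_K(\nabla^2\cdot+\cdot\,\mathrm{Id})\big)$ that expresses the Alexandrov-Fenchel deficit, on the relevant functions, as the sum of a nonnegative squared-gradient (covariant-derivative) term and a curvature term built from $C_K$. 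For a general reference body the curvature term is only known to be nonnegative, which recovers \eqref{eq:minksecond}; but for a zonoid the above superposition exhibits it as an average, over the generating directions, of manifestly favorable two-dimensional contributions, for which the planar logarithmic Brunn-Minkowski inequality of \cite{BLYZ12} is the sharp model, and a mixed-discriminant identity then yields, on even functions, precisely the extra weighted lower bound $\tfrac1{n(n-1)}\int f^2\,dS_{K,\ldots,K}/h_K$ demanded in \eqref{eq:star}. Parity of $f$ guarantees that the translational modes, on which all of these contributions vanish, do not occur.

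The hard part is this last step: isolating the correct Bochner identity for the operator attached to a zonoid, and above all extracting from the segment superposition the \emph{sharp} constant, that is, the coefficient $\tfrac{n-1}{n}$ in \eqref{eq:locallogbm} rather than merely some positive improvement over \eqref{eq:minksecond}. A general symmetric reference body affords no such improvement (that statement is the open logarithmic Brunn-Minkowski conjecture), so the argument must genuinely use that the curvature term of a zonoid is an average of favorable pieces; tracking the normalization constants through the multilinear decomposition and verifying the requisite determinant inequality with the precise weight $1/h_K$ is where the real work lies.
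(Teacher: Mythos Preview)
Your reduction to smooth zonoids and to the inequality $-\V(f,f,K,\ldots,K)\ge\tfrac1{n(n-1)}\int f^2/h_K\,dS_{K,\ldots,K}$ for even $f$ with $\int f\,dS_{K,\ldots,K}=0$ is correct and matches the paper's setup (this is Lemma~\ref{lem:km}). The gap is in your last two paragraphs: you describe the use of the zonoid structure only in outline, explicitly flag the sharp constant as ``where the real work lies,'' and the mechanism you propose---fixing $n-2$ generating directions at once to reduce to a planar problem, with the ``curvature term'' of a Bochner identity decomposing as an average over those directions---is not what the paper does and would be hard to execute. The terms in \eqref{eq:locallogbm} involve a square, a quotient, and the weight $1/h_K$, none of which are multilinear in $K$, so one cannot simply expand $n-2$ copies of $K$ as segment superpositions and sum; nor does the integrand in the Bochner identity \eqref{eq:bochner} decompose linearly in $D^2h_K$.

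The paper's actual argument is an \emph{induction on the dimension} that peels off a \emph{single} segment per step. It proves the Lichnerowicz-type bound
\[
\langle \mathscr{A}_Kf,\mathscr{A}_Kf\rangle\ \ge\ \tfrac{n-2}{n-1}\,\langle f,\mathscr{A}_Kf\rangle+\tfrac{1}{n-1}\,\langle f,f\rangle
\]
for even $f$ (which forces $\lambda\le-\tfrac1{n-1}$ and hence your displayed inequality) as follows. Write $\langle \mathscr{A}_Kf,\mathscr{A}_Kf\rangle=\tfrac1n\int h_K\,\D(D^2f,D^2h_K,\ldots)^2/\D(D^2h_K,\ldots)\,d\omega$, replace the \emph{one explicit} factor $h_K$ by $\int h_{[-u,u]}\,\eta(du)$, and apply Cauchy--Schwarz in the inner integral to obtain
\[
\langle \mathscr{A}_Kf,\mathscr{A}_Kf\rangle\ \ge\ \int \frac{\V([-u,u],f,K,\ldots,K)^2}{\V([-u,u],K,\ldots,K)}\,\eta(du).
\]
By the projection formula (Lemma~\ref{lem:proj}) each integrand is precisely the left side of \eqref{eq:locallogbm} in dimension $n-1$ for the zonoid $\proj_{u^\perp}K$; the induction hypothesis supplies the lower bound with the constants $\tfrac{n-2}{n-1}$ and $\tfrac1{n-1}$, and integrating against $\eta$ reassembles the right-hand side of the display. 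The base case $n=2$ is handled by the same Cauchy--Schwarz step together with the planar identity $\V([-u^\dagger,u^\dagger],f)=2f(u)$, so the paper does not need to quote \cite{BLYZ12}. The single idea your proposal is missing is this Cauchy--Schwarz step after splitting off \emph{one} copy of $h_K$: it is what converts the nonlinear quantity $\langle \mathscr{A}_Kf,\mathscr{A}_Kf\rangle$ into something the $(n{-}1)$-dimensional inequality controls, and it delivers the sharp constant automatically---the $\tfrac1{n-1}$ comes from the lower-dimensional \eqref{eq:locallogbm}, not from any delicate bookkeeping.
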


Our second main result settles the equality cases of Theorem 
\ref{thm:main}.

\begin{defn*}
A vector $u\in S^{n-1}$ is called an \emph{$r$-extreme normal vector}
of a convex body $K$ if there do not exist linearly 
independent normal 
vectors $u_1,\ldots,u_{r+2}$ at a boundary point of $K$ such that 
$u=u_1+\cdots+u_{r+2}$.
\end{defn*}

\begin{thm}
\label{thm:equality}
Let $K\in\mathcal{K}_s^n$ be a zonoid.
Then equality holds in \eqref{eq:locallogbm} if and only if
\begin{enumerate}[1.]
\itemsep\abovedisplayskip
\item $K=C_1+\cdots+C_m$ for some $1\le m\le n$ and zonoids 
$C_1,\ldots,C_m$ such that $\dim(C_1)+\cdots+\dim(C_m)=n$; and
\item there exist $a_1,\ldots,a_m\ge 0$ such that
$L$ and $a_1C_1+\cdots+a_mC_m$ have the same supporting hyperplanes
in all $1$-extreme normal directions of $K$.
\end{enumerate}
\end{thm}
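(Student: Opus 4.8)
The plan is to derive the equality characterization from the proof of Theorem \ref{thm:main} rather than from the inequality \eqref{eq:locallogbm} in isolation. Since the argument proceeds via a Bochner-type identity in the spirit of Shenfeld--van Handel, the inequality \eqref{eq:locallogbm} should emerge as the nonnegativity of a quadratic form (a Hessian of some functional on support functions, regularized to a smooth positive-definite reference body $K$), and equality will hold precisely when a certain test field lies in the kernel of that form. Concretely, writing $f = h_L$ and $g = h_K$, the inequality \eqref{eq:locallogbm} rearranges into the statement that a symmetric bilinear form $Q_K(f,f)\ge 0$, where $Q_K$ is built from the mixed Hessian operator associated to $K$ acting on functions on $S^{n-1}$; the first step is to identify $Q_K$ explicitly and to show that $Q_K(f,f)=0$ forces $f$ to solve a linear second-order PDE on the support of $S_{K,\dots,K}$, of the form "$f$ is an eigenfunction of the $K$-Laplacian with the bottom eigenvalue." For zonoids this operator and its spectrum can be analyzed by decomposing $K$ into its indecomposable summands.

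The second step is the structural reduction. A zonoid $K$ decomposes (up to translation, which we may ignore by symmetry) as a Minkowski sum of lower-dimensional zonoids $C_1,\dots,C_m$ spanning complementary subspaces, and in the extreme case this is exactly condition (1); the point is that the quadratic form $Q_K$ block-diagonalizes along this decomposition, so its kernel is the direct sum of the kernels of the $Q_{C_j}$ acting in the respective subspaces, and each block contributes precisely the supporting hyperplanes of a dilate $a_j C_j$. The technical heart here is to show that the kernel of $Q_K$ consists exactly of support functions of sets of the form $a_1 C_1 + \cdots + a_m C_m$ — i.e., that the "eigenfunctions at the bottom" are spanned by $h_{C_1},\dots,h_{C_m}$ restricted appropriately. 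This is where the $r$-extreme normal vector language enters: the form $Q_K$ only "sees" the behavior of $f$ in directions where the mixed area measure and its derivatives are supported, and a clean way to package "$f$ agrees with $\sum a_j h_{C_j}$ on the relevant part of $S^{n-1}$" is the statement that $L$ and $\sum a_j C_j$ have the same supporting hyperplanes in all $1$-extreme normal directions of $K$, which is exactly condition (2). One must check that "$1$-extreme" is the right threshold: it corresponds to the fact that \eqref{eq:locallogbm} is a second-order (Hessian) condition, so two linearly independent normals combine, whereas for the first-order inequality \eqref{eq:minkfirst} one would see $0$-extreme directions.

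The main obstacle I anticipate is the forward direction combined with the regularization: the proof of Theorem \ref{thm:main} presumably first establishes \eqref{eq:locallogbm} for smooth, strictly convex $K$ (or for $K$ a sum of segments in general position) and then passes to the limit over all zonoids, but equality cases are not automatically preserved under such limits. So one must run a careful approximation argument in which the characterization of the kernel is stable — e.g., by showing that if $K_\varepsilon \to K$ are zonoids with near-equality, the corresponding near-kernel fields $f_\varepsilon$ converge (after normalization) to a genuine kernel element of $Q_K$, using compactness of the relevant Sobolev-type ball and lower semicontinuity of $Q$. The other delicate point is verifying that conditions (1)--(2) are not merely necessary but \emph{sufficient}: here one plugs $L = \sum a_j C_j$ directly into \eqref{eq:locallogbm}, uses multilinearity of mixed volumes together with the fact that $\V(C_i, C_j, K, \dots, K) = 0$ whenever the spans of $C_i$ and $C_j$ are not in "general enough position" relative to the remaining copies of $K$ (a dimension count, since $\dim C_1 + \cdots + \dim C_m = n$), and checks that the resulting identity reduces to the Cauchy--Schwarz equality case; the subtlety is that $L$ only needs to match $\sum a_j C_j$ in the $1$-extreme directions, so one needs that the integrals in \eqref{eq:locallogbm} depend on $h_L$ only through those directions, which is precisely the support property of the mixed area measures of $K$ used in the first step.
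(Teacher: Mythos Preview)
Your proposal has the right overall shape---equality in \eqref{eq:locallogbm} is indeed equivalent to an eigenfunction condition, and sufficiency does reduce to a direct multilinear computation along the lines you sketch (this is essentially Lemma~\ref{lem:eqbasic} in the paper). But there are two genuine gaps in the necessity direction.

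First, you treat the decomposition $K=C_1+\cdots+C_m$ with $\sum\dim C_i=n$ as structural data that is available \emph{a priori} and along which $Q_K$ block-diagonalizes. It is not: a generic zonoid (for instance any $C^2_+$ zonoid) admits no such nontrivial decomposition, so there is nothing to block-diagonalize against. The decomposition has to be \emph{produced} from the equality condition itself. In the paper this is done by a graph argument on $\supp\eta$ (Proposition~\ref{prop:eqnecess}): equality in the Cauchy--Schwarz step of the Bochner identity forces $h_L(x)=c(u)h_K(x)$ whenever $x\in\supp S_{K,\ldots,K}$ and $\langle u,x\rangle\ne 0$, and the connected components of the resulting graph on $\supp\eta$ are shown to span linearly independent subspaces, which defines the $C_i$. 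Your outline contains no mechanism that plays this role.

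Second, your approximation strategy cannot work as stated. For smooth $K_\varepsilon$ the only equality case is $L=aK_\varepsilon$ (Remark~\ref{rem:protoequality}), so the kernel of $Q_{K_\varepsilon}$ is one-dimensional and there are no nontrivial near-kernel fields that could converge to anything interesting; lower semicontinuity and compactness do not help. The paper avoids approximation entirely for the equality analysis: it reformulates the eigenfunction condition $\mathscr{A}_Kf=-\frac{1}{n-1}f$ as an identity of \emph{measures}, $h_K\,dS_{f,K,\ldots,K}=-\frac{1}{n-1}f\,dS_{K,\ldots,K}$ (Lemma~\ref{lem:alex}), which makes sense for arbitrary zonoids $K$, and then reruns the Bochner chain of inequalities with $-\frac{1}{n-1}f$ substituted for $\mathscr{A}_Kf$ (Lemma~\ref{lem:eqbochner}). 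Finally, upgrading ``$h_L=h_{L'}$ on $\supp S_{K,\ldots,K}$'' to ``same supporting hyperplanes in all $1$-extreme normal directions of $K$'' is not a matter of counting orders of differentiation; it requires the equality characterization of Minkowski's quadratic inequality from \cite{SvH22}, which your outline does not invoke.
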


Theorem \ref{thm:main} does not suffice to conclude that the logarithmic 
Brunn-Minkowski inequality \eqref{eq:logbm} holds when $K,L$ are zonoids, 
as $K^{1-t}L^t$ is generally not a zonoid. Nonetheless, by combining 
Theorems \ref{thm:main}--\ref{thm:equality} with 
\cite[Theorem~2.1]{Mil21} we can deduce validity of the logarithmic 
Minkowski inequality \eqref{eq:logmink}, albeit without its equality 
cases. Some further implications will be given in section 
\ref{sec:impl}.

\begin{cor}
\label{cor:main}
Let $K\in\mathcal{K}_s^n$ be a zonoid. Then the logarithmic Minkowski 
inequality \eqref{eq:logmink} holds for all $L\in\mathcal{K}_s^n$.
\end{cor}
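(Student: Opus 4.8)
The plan is to obtain Corollary~\ref{cor:main} by feeding Theorems~\ref{thm:main} and~\ref{thm:equality} into the local-to-global principle of \cite[Theorem~2.1]{Mil21}. Fix a zonoid $K\in\mathcal{K}_s^n$. A naive approach — running a continuity argument along the path $t\mapsto K^{1-t}L^t$ and invoking Theorem~\ref{thm:main} at each intermediate body — fails at once, since $K^{1-t}L^t$ is in general not a zonoid and Theorem~\ref{thm:main} is unavailable there. The content of \cite[Theorem~2.1]{Mil21} is precisely that the local inequality at the intermediate bodies is \emph{not} needed: in the form relevant here, it asserts that if a \emph{fixed} $K\in\mathcal{K}_s^n$ satisfies the local logarithmic Brunn--Minkowski inequality \eqref{eq:locallogbm} for all $L\in\mathcal{K}_s^n$, together with a suitable description of its equality cases, then $K$ satisfies the global logarithmic Minkowski inequality \eqref{eq:logmink} for all $L\in\mathcal{K}_s^n$. (This goes beyond Theorem~\ref{thm:logequiv}, whose equivalences are quantified over all reference bodies.) Thus Theorem~\ref{thm:main} furnishes the hypothesis \eqref{eq:locallogbm}, and it remains only to check that the equality characterization required as input by \cite[Theorem~2.1]{Mil21} is the one provided by Theorem~\ref{thm:equality}.

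The equality analysis enters through the boundary case of the connectedness argument underlying \cite[Theorem~2.1]{Mil21}: one tracks the largest parameter up to which $\psi(s):=\log\Vol(K^{1-s}L^s)$ behaves as expected, and the delicate point is the value at which \eqref{eq:locallogbm} (evaluated at the fixed body $K$) is saturated. By Theorem~\ref{thm:equality}, equality in \eqref{eq:locallogbm} forces $K=C_1+\cdots+C_m$ with $\dim C_1+\cdots+\dim C_m=n$ and $L$ to have the same supporting hyperplanes as some $a_1C_1+\cdots+a_mC_m$ in all $1$-extreme normal directions of $K$; this is exactly the degenerate configuration along which $\psi$ is affine, so it produces no obstruction to concavity. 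It therefore suffices to verify that the conclusion of Theorem~\ref{thm:equality} matches (or implies) the equality hypothesis demanded by \cite[Theorem~2.1]{Mil21}, which is routine. This yields \eqref{eq:logmink} for $K$ and all $L\in\mathcal{K}_s^n$, and hence Corollary~\ref{cor:main}.

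The main obstacle I anticipate is precisely this final bookkeeping: reconciling the shape of the equality set in Theorem~\ref{thm:equality} — a Minkowski decomposition of $K$ together with agreement of $L$ with a dilate of the summands on $1$-extreme directions — with the precise hypotheses of \cite[Theorem~2.1]{Mil21}. A mild preliminary reduction (e.g.\ approximating $K$ and $L$ to meet any regularity assumptions in \cite{Mil21}, using that the class of zonoids is closed under Hausdorff limits and that \eqref{eq:logmink} passes to such limits) may also be required. Finally, I note that the argument delivers \eqref{eq:logmink} but not its equality cases: the passage from \eqref{eq:locallogbm} to \eqref{eq:logmink} is one-directional, and the connectedness argument does not retain enough rigidity to determine when global equality holds.
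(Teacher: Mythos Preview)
Your reading of \cite[Theorem~2.1]{Mil21} is not accurate, and the proposal as written does not go through. That result does not assert that the local inequality \eqref{eq:locallogbm} at a single fixed $K$, together with a description of its equality cases, yields \eqref{eq:logmink} at $K$. Rather, the implication used here requires a local spectral condition---with a uniform quantitative gap, i.e.\ some $p<0$---to hold along an entire one-parameter family connecting $K$ to the Euclidean ball. Your claim that ``the local inequality at the intermediate bodies is not needed'' is therefore a misreading: the intermediate bodies along the geometric-mean path $K^{1-t}L^t$ are indeed irrelevant, but a \emph{different} family of intermediate bodies is still required, and the local condition must hold throughout it. The ``bookkeeping'' you anticipate (matching the decomposition in Theorem~\ref{thm:equality} to a hypothesis in \cite{Mil21}) is not the obstacle; the obstacle is that no such hypothesis exists.

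The paper's argument is structured quite differently and does not invoke Theorem~\ref{thm:equality} in the form you propose. One first approximates $K$ by a zonoid of class $C^\infty_+$, then takes the family $\mathcal{F}=\{(1-t)K+tB:t\in[0,1]\}$, each of whose members is again a smooth zonoid. The key input is the observation in Remark~\ref{rem:protoequality}: for a zonoid of class $C^2_+$, every even eigenfunction of $\mathscr{A}_{K'}$ orthogonal to $h_{K'}$ has eigenvalue \emph{strictly} below $-\tfrac{1}{n-1}$. Continuity of the spectrum over the compact family $\mathcal{F}$ then upgrades this to a uniform gap $\varepsilon>0$, hence to some $p<0$ for which condition~(4) of \cite[Theorem~2.1]{Mil21} holds for all $K'\in\mathcal{F}$; the implication (4)$\Rightarrow$(3b) delivers \eqref{eq:logmink}. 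Thus what is actually used from the equality analysis is not the structural classification of Theorem~\ref{thm:equality}, but only the much softer fact that smooth zonoids admit no nontrivial equality cases at all.
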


It appears somewhat unlikely that our results make major progress 
in themselves toward the full resolution of the logarithmic Brunn-Minkowski 
conjecture; as is the case for other well-known conjectures in convex 
geometry (see, e.g., \cite{GMR88}), zonoids form a very special class of 
convex bodies that provide only modest insight into the behavior of 
general convex bodies. Nonetheless, let us highlight several interesting 
features of the main results of this note:
\begin{enumerate}[$\bullet$] 
\itemsep\abovedisplayskip 
\item Theorem \ref{thm:main} possesses many nontrivial equality cases; 
therefore, in contrast to the setting of previous results in dimensions 
$n\ge 3$ for general $L\in\mathcal{K}_s^n$, the class of zonoids includes 
many extreme cases of the logarithmic Brunn-Minkowski conjecture.
(Theorem \ref{thm:equality} supports the conjectured equality cases
in \cite{BK20}.) 
\item Unlike in dimensions $n\ge 3$, every planar symmetric convex body is 
a zonoid. The $n=2$ case of the logarithmic Brunn-Minkowski conjecture 
that was settled in \cite{BLYZ12} may therefore be viewed in a new light 
as a special case of our results (modulo the nontrivial Theorem 
\ref{thm:logequiv}). In fact, the proof of Theorem \ref{thm:main} will 
work in a completely analogous manner for $n=2$ and $n\ge 3$. 
\item The $\ell_p^n$-ball is a zonoid for every $n$ and $2\le 
p\le\infty$ \cite[Theorem 6.6]{Bol69}.
Our results therefore capture as special cases all explicit 
examples of convex bodies $K$ for which \eqref{eq:logmink} and 
\eqref{eq:locallogbm} were previously known to hold.\footnote{%
	However, the methods of \cite{KM17,Mil21} provide complementary
	information that
	does not follow from our results.
	For example, the estimates of \cite{KM17} imply that for any
	$2<p<\infty$ and $n\ge n_0(p)$, all
	$K\in\mathcal{K}_s^n$ that are sufficiently close to the 
	$\ell_p^n$-ball in a quantitative sense satisfy the
	$L^q$-Minkowski inequality with $q=-\frac{1}{4}$.
	More generally, it is shown in \cite{Mil21} that for
        any $K\in\mathcal{K}_s^n$, there exists $K'\in\mathcal{K}_s^n$
        with $K\subseteq K'\subseteq 8K$ so that $K'$ satisfies the
        $L^q$-Minkowski inequality with $q=-\frac{1}{4}$.
}
\end{enumerate}
Before we proceed, let us briefly sketch some key ideas behind the proofs.

\subsection{}
\label{sec:sketch}

It was a fundamental insight of Hilbert \cite[Chapter XIX]{Hil12} that
mixed volumes of sufficiently smooth convex bodies admit a spectral 
interpretation. To this end, given any sufficiently smooth convex body $K
\in\mathcal{K}^n$, Hilbert constructs an elliptic differential 
operator $\mathscr{A}_K$ (see section \ref{sec:hilbert} for
a precise definition) and a measure $d\mu_K := 
\frac{1}{nh_K}dS_{K,\ldots,K}$ on $S^{n-1}$ with the 
following properties:
\begin{enumerate}[$\bullet$]
\itemsep\abovedisplayskip
\item
$\mathscr{A}_K$ defines a self-adjoint operator on $L^2(\mu_K)$ with
discrete spectrum.
\item
$\mathscr{A}_Kh_K=h_K$, that is,
$h_K$ is an eigenfunction with eigenvalue $1$.
\item 
$\V(L,M,K,\ldots,K) = \langle h_L,\mathscr{A}_K h_M\rangle$ for all
$L,M\in\mathcal{K}^n$.
\end{enumerate}
Using these properties, it is readily verified that \eqref{eq:minksecond} 
is equivalent to
\begin{equation}
\label{eq:hilbert}
	\langle f,\mathscr{A}_Kf\rangle\le 0
	\quad\mbox{for}\quad
	f = h_L - \frac{\langle 
	h_L,h_K\rangle}{\|h_K\|^2}
	h_K,
\end{equation}
where we denote by $\langle\cdot,\cdot\rangle$ and
$\|\cdot\|$ the inner product and norm of $L^2(\mu_K)$. As 
$f$ in \eqref{eq:hilbert} is the projection of $h_L$ on $\{h_K\}^\perp$, 
we obtain:

\begin{lem}[Hilbert]
\label{lem:hilbert}
Items 1--3 of Lemma \ref{lem:bmequiv} are equivalent to:
\begin{enumerate}[1.]
\setcounter{enumi}{3}
\item
For every sufficiently smooth convex body $K\in\mathcal{K}^n$,
any eigenfunction 
$\mathscr{A}_Kf=\lambda f$ with $\langle f, h_K\rangle=0$ has eigenvalue 
$\lambda\le 0$.
\end{enumerate}
\end{lem}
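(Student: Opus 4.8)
The plan is as follows. Items 1--3 of Lemma~\ref{lem:bmequiv} are already known to be equivalent, so it suffices to prove that item~3 --- the validity of \eqref{eq:minksecond} for all $K,L\in\mathcal{K}^n$ --- is equivalent to item~4. I will fix a sufficiently smooth $K\in\mathcal{K}^n$ and show that \eqref{eq:minksecond} holds for all $L\in\mathcal{K}^n$ if and only if $\mathscr{A}_K$ is negative semidefinite on the hyperplane $\{h_K\}^\perp\subset L^2(\mu_K)$, and then read off from the spectral theorem that the latter is precisely the eigenvalue condition in item~4. A routine approximation argument will bridge the gap between the ``sufficiently smooth'' $K$ appearing in item~4 and the arbitrary bodies in items 1--3.

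First I would rewrite \eqref{eq:minksecond} in terms of the quadratic form of $\mathscr{A}_K$. Hilbert's properties 2--3 give $\V(L,K,\ldots,K)=\langle h_L,h_K\rangle$, $\V(L,L,K,\ldots,K)=\langle h_L,\mathscr{A}_Kh_L\rangle$, and $\Vol(K)=\V(K,\ldots,K)=\|h_K\|^2$. Writing the $L^2(\mu_K)$-orthogonal decomposition $h_L=f+t\,h_K$ with $f\in\{h_K\}^\perp$ and $t=\langle h_L,h_K\rangle/\|h_K\|^2$, self-adjointness of $\mathscr{A}_K$ together with $\mathscr{A}_Kh_K=h_K$ and $\langle f,h_K\rangle=0$ yields $\langle h_L,\mathscr{A}_Kh_L\rangle=\langle f,\mathscr{A}_Kf\rangle+t^2\|h_K\|^2$. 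Substituting these identities into \eqref{eq:minksecond} and cancelling the common term $t^2\|h_K\|^4=\langle h_L,h_K\rangle^2$, the inequality reduces to $\langle f,\mathscr{A}_Kf\rangle\le 0$, which is exactly \eqref{eq:hilbert} since $f$ is the orthogonal projection of $h_L$ onto $\{h_K\}^\perp$. Thus \eqref{eq:minksecond} for all $L$ is equivalent to $\langle f,\mathscr{A}_Kf\rangle\le 0$ for every $f$ obtained as such a projection.

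Next I would pass from this cone of projected support functions to the entire hyperplane $\{h_K\}^\perp$. One direction is trivial. For the converse, let $g\in\{h_K\}^\perp$ be smooth; since $K$ is smooth with positive curvature, the $1$-homogeneous extension of $h_K+\varepsilon g$ to $\mathbb{R}^n$ is convex for every sufficiently small $\varepsilon>0$, so $h_K+\varepsilon g$ is a support function, and projecting onto $\{h_K\}^\perp$ shows that $\varepsilon g$ is a projected support function. Applying \eqref{eq:hilbert} to it and using homogeneity of the quadratic form gives $\langle g,\mathscr{A}_Kg\rangle\le 0$ for every smooth $g\in\{h_K\}^\perp$, hence (by density) for every $g$ in the form domain of $\mathscr{A}_K$ lying in $\{h_K\}^\perp$. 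Since $h_K$ is an eigenfunction of the self-adjoint operator $\mathscr{A}_K$, the hyperplane $\{h_K\}^\perp$ is invariant under $\mathscr{A}_K$; and a self-adjoint operator with discrete spectrum is negative semidefinite on an invariant subspace if and only if every eigenfunction of $\mathscr{A}_K$ lying in that subspace has eigenvalue $\le 0$ --- which, applied to $\{h_K\}^\perp$, is item~4. To close the circle: if item~4 holds then the above gives \eqref{eq:minksecond} for all $L$ and all smooth $K$, and since smooth bodies with positive curvature are dense in $\mathcal{K}^n$ in the Hausdorff metric while mixed volumes are continuous, \eqref{eq:minksecond} extends to all $K,L\in\mathcal{K}^n$; the reverse implication is immediate from the equivalences established above.

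I expect the only genuine work to lie in the functional-analytic bookkeeping of the third paragraph: identifying the form domain of $\mathscr{A}_K$ (so that the merely Lipschitz support functions may legitimately be paired against $\mathscr{A}_K$ as in property~3 and fed into the spectral theorem), confirming continuity of the quadratic form in the topology in which the projected support functions exhaust a neighbourhood of the origin, and verifying uniformly on $S^{n-1}$ the convexity criterion for $h_K+\varepsilon g$. All of this is supplied by the explicit construction of $\mathscr{A}_K$ and $\mu_K$ recalled in section~\ref{sec:hilbert} together with standard elliptic regularity, so the lemma amounts to a reformulation rather than a new conceptual obstacle.
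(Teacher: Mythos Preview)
Your proposal is correct and follows essentially the same approach as the paper: the paper's argument, given in the discussion immediately preceding the lemma, rewrites \eqref{eq:minksecond} as \eqref{eq:hilbert} via the orthogonal decomposition $h_L=f+t\,h_K$ and then reads off the eigenvalue condition from the spectral theorem, exactly as you do. Your version is simply more explicit about the density of projected support functions in $\{h_K\}^\perp$ and the approximation step for non-smooth $K$, both of which the paper leaves implicit.
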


The condition of Lemma \ref{lem:hilbert} is optimal, as $\mathscr{A}_K$ 
always has eigenfunctions with eigenvalue $0$: any linear function 
$\ell(x)=h_{\{v\}}(x)=\langle v,x\rangle$ satisfies $\mathscr{A}_K\ell=0$. 
If we restrict attention to \emph{symmetric} convex bodies 
$K,L\in\mathcal{K}_s^n$, however, only \emph{even} functions $f(x)=f(-x)$ 
arise in \eqref{eq:hilbert}, and it is certainly possible that all even 
eigenfunctions of $\mathscr{A}_K$ have strictly negative eigenvalues. It 
was observed by Kolesnikov and Milman \cite{KM17} that the logarithmic 
Brunn-Minkowski conjecture may be viewed as a quantitative form of this 
phenomenon: as \eqref{eq:locallogbm} is equivalent to
$$
	\langle f,\mathscr{A}_Kf\rangle \le -\frac{1}{n-1}
	\|f\|^2
        \quad\mbox{for}\quad
        f = h_L - \frac{\langle
        h_L,h_K\rangle}{\|h_K\|^2}
        h_K,
$$
the following conclusion follows readily.

\begin{lem}[Kolesnikov-Milman]
\label{lem:km}
Items 1--3 of Theorem \ref{thm:logequiv} are equivalent to:
\begin{enumerate}[1.]
\setcounter{enumi}{3}
\item
For every sufficiently smooth symmetric convex body $K\in\mathcal{K}^n_s$,
any \emph{even} eigenfunction 
$\mathscr{A}_Kf=\lambda f$ with $\langle f, h_K\rangle=0$ has eigenvalue 
$\lambda\le -\frac{1}{n-1}$.
\end{enumerate}
\end{lem}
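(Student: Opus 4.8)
The plan is to follow, line for line, Hilbert's passage from Minkowski's second inequality \eqref{eq:minksecond} to the spectral condition of Lemma \ref{lem:hilbert}, with the zero on the right-hand side replaced by $-\tfrac{1}{n-1}$. Since Theorem \ref{thm:logequiv} already guarantees that Items 1--3 are mutually equivalent, it suffices to prove that Item 3 is equivalent to Item 4.

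First I would rewrite \eqref{eq:locallogbm} as an inequality for the quadratic form of $\mathscr{A}_K$ on $L^2(\mu_K)$. Using Hilbert's three properties: property 3 with $M=K$ together with $\mathscr{A}_Kh_K=h_K$ give $\V(L,K,\ldots,K)=\langle h_L,h_K\rangle$ and $\Vol(K)=\|h_K\|^2$; property 3 with $M=L$ gives $\V(L,L,K,\ldots,K)=\langle h_L,\mathscr{A}_Kh_L\rangle$; and the definition $d\mu_K=\tfrac{1}{nh_K}\,dS_{K,\ldots,K}$ gives $\int\tfrac{h_L^2}{h_K}\,dS_{K,\ldots,K}=n\|h_L\|^2$. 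Substituting these into \eqref{eq:locallogbm}, and using self-adjointness of $\mathscr{A}_K$ to expand $\langle f,\mathscr{A}_Kf\rangle$ and $\|f\|^2$ for the projection $f=h_L-\tfrac{\langle h_L,h_K\rangle}{\|h_K\|^2}h_K$ of $h_L$ onto $\{h_K\}^\perp$, a short computation identical in form to the one behind \eqref{eq:hilbert} identifies \eqref{eq:locallogbm} for a given $L$ with the bound $\langle f,\mathscr{A}_Kf\rangle\le-\tfrac{1}{n-1}\|f\|^2$. Thus Item 3 asserts precisely that this bound holds whenever $f$ is the projection onto $\{h_K\}^\perp$ of the support function of some $L\in\mathcal{K}_s^n$; such $f$ is even because $h_K$ and $h_L$ are.

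It then remains to match this with the eigenvalue statement, in both directions. For Item 4 $\Rightarrow$ Item 3: since $\mathscr{A}_K$ is self-adjoint with discrete spectrum and commutes with the reflection $f(x)\mapsto f(-x)$, the even part of $L^2(\mu_K)$ has an orthonormal basis of even eigenfunctions, which we may choose so that $h_K/\|h_K\|$ is one of them (it is an even eigenfunction of eigenvalue $1$, $K$ being symmetric). Every other basis element is orthogonal to $h_K$ --- automatically if it lies in a different eigenspace, and by the choice of basis inside the eigenvalue-$1$ eigenspace --- hence has eigenvalue $\le-\tfrac{1}{n-1}$ by Item 4; expanding an even $f\perp h_K$ in this basis and applying Parseval gives $\langle f,\mathscr{A}_Kf\rangle\le-\tfrac{1}{n-1}\|f\|^2$. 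For Item 3 $\Rightarrow$ Item 4: given an even eigenfunction $g$ with eigenvalue $\lambda$ and $\langle g,h_K\rangle=0$, elliptic regularity makes $g$ smooth, so for $t>0$ large enough $g+th_K$ is the support function of a smooth, strictly convex body $L$, which is symmetric since $g+th_K$ is even; moreover the projection of $h_L=g+th_K$ onto $\{h_K\}^\perp$ equals $g$, using $\langle g,h_K\rangle=0$. Feeding this $L$ into Item 3 yields $\lambda\|g\|^2=\langle g,\mathscr{A}_Kg\rangle\le-\tfrac{1}{n-1}\|g\|^2$, that is, $\lambda\le-\tfrac{1}{n-1}$.

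I do not expect a genuine obstacle here: all the substance sits in Theorem \ref{thm:logequiv} (quoted) and in Hilbert's spectral machinery (set up in section \ref{sec:hilbert}). The only points needing a word of care are the two standard inputs used in the last step --- that eigenfunctions of the elliptic operator $\mathscr{A}_K$ are smooth, and that adding a large multiple of $h_K$ turns any even smooth function into a support function, because $h_K$ is smooth with everywhere positive curvature once $K$ is sufficiently smooth --- together with the routine verification that the algebra above really does convert \eqref{eq:locallogbm} into the stated quadratic-form bound.
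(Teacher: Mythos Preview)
Your proposal is correct and follows essentially the same route as the paper, which merely sketches the argument in section~\ref{sec:sketch}: identify \eqref{eq:locallogbm} with the quadratic-form bound $\langle f,\mathscr{A}_Kf\rangle\le-\tfrac{1}{n-1}\|f\|^2$ for the projection $f$ of $h_L$ onto $\{h_K\}^\perp$, and then pass between this and the eigenvalue statement via the even spectral decomposition. Your write-up is in fact more detailed than the paper's own treatment; the only point you leave implicit (as does the paper) is that once Item~4 yields \eqref{eq:locallogbm} for smooth $K$, a routine approximation extends it to all $K\in\mathcal{K}_s^n$, and your caveat about the eigenvalue-$1$ eigenspace is unnecessary since section~\ref{sec:hilbert} records that $\lambda_1=1$ is simple.
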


It is verified in \cite[Theorem 10.4]{KM17} that when $K$ is the 
$\ell_p^n$-ball for $2\le p<\infty$, any even eigenfunction of 
$\mathscr{A}_K$ orthogonal to $h_K$ has eigenvalue $\lambda$ with 
$n\lambda\to-\infty$ as $n\to\infty$. This shows that such $K$ satisfy the 
condition of Lemma \ref{lem:km} for large $n$, but does not explain the 
significance of the threshold $-\frac{1}{n-1}$.

A new approach to the study of the spectral properties of $\mathscr{A}_K$ 
was discovered by Shenfeld and the author in \cite{SvH18}. This approach, 
called the \emph{Bochner method} in view of its analogy to the classical 
Bochner method in differential geometry, has found several surprising 
applications both inside and outside convex geometry. The Bochner method 
was already used in \cite{SvH18} to provide new proofs of the 
Alexandrov-Fenchel inequality, a much deeper result of which 
\eqref{eq:minksecond} is a special case, and of the Alexandrov mixed 
discriminant inequality. Subsequent applications outside convexity include 
the proof of certain properties of Lorentzian polynomials in 
\cite{CKMS19,BL21} and the striking results of \cite{CP21}, where the 
method is used to prove numerous combinatorial inequalities. The paper 
\cite{Mil21} contains another application to the study of isomorphic 
variants of the $L^q$-Minkowski problem.

The proofs of Theorems \ref{thm:main}--\ref{thm:equality} provide yet 
another illustration of the utility of the Bochner method. By using a 
variation on the method of \cite{SvH18}, we obtain a ``Bochner identity'' 
which relates the spectral condition of Lemma \ref{lem:km} in dimension 
$n$ to the inequality \eqref{eq:locallogbm} in dimension $n-1$. The 
conclusion then follows by induction on the dimension. One interesting 
feature of this proof is that it provides an explanation for the appearance 
of the mysterious value $-\frac{1}{n-1}$ in Lemma \ref{lem:km}.\footnote{
	As was pointed out in \cite{KM17}, the eigenvalue $-\frac{1}{n-1}$
	is attained when $K$ is the cube, so that Lemma \ref{lem:km} may
	be interpreted as stating that the second eigenvalue of 
	$\mathscr{A}_K$ is maximized by the cube. This interpretation
	does not explain, however, \emph{why} this should be the case.
	In any case, there are many maximizers other than cubes, as
	is already illustrated by Theorem \ref{thm:equality}.
}
While the specific formulas derived in this note rely on the zonoid 
assumption, our approach may provide some hope that other variations on 
the Bochner method could lead to further progress toward the logarithmic 
Brunn-Minkowski conjecture.

\subsection{}

The rest of this note is organized as follows. In section 
\ref{sec:prelim}, we briefly recall some background from convex geometry 
that will be needed in the proofs, and we recall the basic idea behind the 
Bochner method as developed in \cite{SvH18}. Theorem \ref{thm:main} is 
proved in section \ref{sec:main}, and Theorem \ref{thm:equality} is 
proved in section \ref{sec:eq}. Finally, section \ref{sec:impl} spells out 
some implications of Theorem \ref{thm:main}, including the proof of 
Corollary \ref{cor:main}.

\section{Preliminaries}
\label{sec:prelim}

Throughout this note, we will use without comment the standard properties 
of mixed volumes and mixed area measures: that they are nonnegative, 
symmetric and multilinear in their arguments, and continuous under 
Hausdorff convergence. We refer to the monograph \cite{Sch14} for a 
detailed treatment, or to \cite[\S 2]{SvH18}, \cite[\S 4]{SvH22} for a 
brief review of such basic properties. The aim of this section is to 
recall some further notions that will play a central role in the sequel: 
the behavior of mixed volumes under projections, the construction of the 
Hilbert operator $\mathscr{A}_K$ for sufficiently smooth convex bodies, 
and the Bochner method of \cite{SvH18}.

The following notation will often be used: if $f=h_K-h_L$ is a difference
of support functions of convex bodies, then we define \cite[\S 5.2]{Sch14}
\begin{align*}
	&\V(f,C_1,\ldots,C_{n-1}) :=
	\V(K,C_1,\ldots,C_{n-1})-\V(L,C_1,\ldots,C_{n-1}),\\
	&S_{f,C_1,\ldots,C_{n-2}} := S_{K,C_1,\ldots,C_{n-2}}-
	S_{L,C_1,\ldots,C_{n-2}}.
\end{align*}
We similarly define $\V(f,g,C_1,\ldots,C_{n-2})$ by linearity when $f,g$ 
are differences of support functions, etc. Mixed volumes and area measures 
of differences of support functions are still symmetric and multilinear,
but need not be nonnegative.

\subsection{Projections and zonoids}

Let $E\subseteq\mathbb{R}^n$ be a linear subspace of dimension $k$, and 
let $C_1,\ldots,C_k$ be convex bodies in $E$. Then we denote by 
$\V(C_1,\ldots,C_k)$ and $S_{C_1,\ldots,C_{k-1}}$ the mixed volume and 
mixed area measure computed in $E\simeq\mathbb{R}^k$. We will often view 
$S_{C_1,\ldots,C_{k-1}}$ as a measure on $\mathbb{R}^n$ that is supported 
in $E$. The projection of a convex body $C$ in $\mathbb{R}^n$ onto $E$ 
will be denoted as $\proj_EC$.

The following basic formulas relate mixed volumes and mixed area measures 
of convex bodies to those of their projections.

\begin{lem}
\label{lem:proj}
For any $u\in S^{n-1}$ and $C_1,\ldots,C_{n-1}\in\mathcal{K}^n$, we have
\begin{align*}
	\frac{n}{2}\,\V([-u,u],C_1,\ldots,C_{n-1}) &=
	\V(\proj_{u^\perp}C_1,\ldots,\proj_{u^\perp}C_{n-1}),
	\\
	\frac{n-1}{2}\,S_{[-u,u],C_1,\ldots,C_{n-2}} &=
	S_{\proj_{u^\perp}C_1,\ldots,\proj_{u^\perp}C_{n-2}}.
\end{align*}
\end{lem}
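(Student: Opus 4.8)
The statement to prove is Lemma~\ref{lem:proj}, relating mixed volumes and mixed area measures of bodies in $\mathbb{R}^n$ involving a segment $[-u,u]$ to projected quantities on $u^\perp$.

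My plan:

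\textbf{Overall approach.} The plan is to prove both formulas by reducing to the well-known decomposition of Minkowski sums with a segment, and then to pass from the volume identity to the area-measure identity by a polarization/differentiation argument (or by a direct geometric argument via the definition of mixed area measures through surface area measures of sums).

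\textbf{Step 1: the volume formula.} First I would recall the elementary fact that for a segment $S=[-u,u]$ with $|u|=1$ and any convex body $M\in\mathcal{K}^n$, one has the ``Cavalieri-type'' identity
\[
	\Vol(M+\lambda S) = \Vol(M) + 2\lambda\,\Vol_{n-1}(\proj_{u^\perp}M)
\]
for $\lambda\ge 0$, where $\Vol_{n-1}$ denotes $(n-1)$-dimensional volume in $u^\perp$; geometrically, translating $M$ along the segment sweeps out a prism over $\proj_{u^\perp}M$ of height $2\lambda$, and the overlap contributes exactly $\Vol(M)$. Now apply this with $M = \lambda_1 C_1 + \cdots + \lambda_{n-1}C_{n-1}$ and expand both sides as polynomials in $\lambda,\lambda_1,\dots,\lambda_{n-1}$ using Minkowski's polynomial expansion (on the left in $\mathbb{R}^n$, on the right in $u^\perp\simeq\mathbb{R}^{n-1}$, using that $\proj_{u^\perp}$ commutes with Minkowski sums and dilations). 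Matching the coefficient of $\lambda\,\lambda_1\cdots\lambda_{n-1}$ on both sides: on the left the coefficient of $\lambda$ in $\Vol(M+\lambda S)$ picks out $n\,\V(S,M,\dots,M)$ divided appropriately, and extracting $\lambda_1\cdots\lambda_{n-1}$ from $\V(S,M^{(n-1)})$ gives $n!\,\V([-u,u],C_1,\dots,C_{n-1})$ times a combinatorial factor; on the right $2\,\Vol_{n-1}(\proj_{u^\perp}M)$ expands with coefficient $2\cdot(n-1)!\,\V(\proj_{u^\perp}C_1,\dots,\proj_{u^\perp}C_{n-1})$. Careful bookkeeping of the factorials yields the stated constant $\tfrac{n}{2}$. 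This step is routine but the combinatorial constants must be tracked; I would actually prefer to verify the constant by testing on a single clean example (e.g.\ all $C_i$ equal to a cube aligned with $u$), which pins down the normalization with no risk of a factorial slip.

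\textbf{Step 2: the area-measure formula.} For the second identity I would use the characterization of mixed area measures via $\V(K,C_1,\dots,C_{n-1}) = \tfrac1n\int h_K\,dS_{C_1,\dots,C_{n-1}}$, valid for every $K\in\mathcal{K}^n$. Applying the (already proved) volume identity of Step~1 with $C_{n-1}\leftarrow K$ arbitrary and the remaining bodies fixed, the left side becomes $\tfrac{n}{2}\V([-u,u],C_1,\dots,C_{n-2},K) = \tfrac{1}{2}\int h_K\,dS_{[-u,u],C_1,\dots,C_{n-2}}$, while the right side is the $(n-1)$-dimensional mixed volume $\V(\proj_{u^\perp}C_1,\dots,\proj_{u^\perp}C_{n-2},\proj_{u^\perp}K)$, which by the same integral representation in $u^\perp$ equals $\tfrac{1}{n-1}\int_{S^{n-2}} h_{\proj_{u^\perp}K}\,dS_{\proj_{u^\perp}C_1,\dots,\proj_{u^\perp}C_{n-2}}$. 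Since $h_{\proj_{u^\perp}K}$ agrees with the restriction of $h_K$ to $u^\perp$, and the right-hand measure is supported on $S^{n-1}\cap u^\perp$, we get $\tfrac{1}{2}\int h_K\,dS_{[-u,u],C_1,\dots,C_{n-2}} = \tfrac{1}{n-1}\int h_K\,dS_{\proj_{u^\perp}C_1,\dots,\proj_{u^\perp}C_{n-2}}$ (viewing the latter as a measure on $\mathbb{R}^n$ supported in $u^\perp$). Because this holds for \emph{every} $K\in\mathcal{K}^n$, and support functions of convex bodies span a dense enough class to separate finite signed measures on $S^{n-1}$ (differences of support functions are dense in $C(S^{n-1})$, or one uses that a finite measure is determined by its integrals against all support functions), the two measures must coincide, giving $\tfrac{n-1}{2}S_{[-u,u],C_1,\dots,C_{n-2}} = S_{\proj_{u^\perp}C_1,\dots,\proj_{u^\perp}C_{n-2}}$.

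\textbf{Main obstacle.} The genuinely content-bearing step is Step~1 — the prism decomposition $\Vol(M+\lambda[-u,u]) = \Vol(M)+2\lambda\Vol_{n-1}(\proj_{u^\perp}M)$ — though this is classical (it is essentially Cavalieri's principle / the formula for the volume of a ``cylinder''). The rest is bookkeeping: the one place where an error is easy to make is the combinatorial constant, so I would nail that down with an explicit example rather than by abstract coefficient-matching. The density argument in Step~2 is standard and poses no real difficulty, since mixed area measures are by construction the objects against which support functions integrate to mixed volumes, so equality of all such integrals forces equality of the measures.
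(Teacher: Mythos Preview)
Your proposal is correct and follows essentially the same approach as the paper. The paper simply cites the first identity from Schneider \cite[(5.77)]{Sch14} rather than proving it (your prism/Cavalieri argument is in fact the standard proof behind that citation), and then derives the second identity exactly as you do in Step~2: rewrite the first identity via the integral representation \eqref{eq:mixvolarea}, use $h_{\proj_{u^\perp}K}=h_K$ on $u^\perp$, and conclude equality of measures from the density of differences of support functions (the paper invokes Lemma~\ref{lem:c2d2} for this).
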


\begin{proof}
The first identity is \cite[(5.77)]{Sch14}. To prove the second identity, 
note that the first identity may be rewritten using \eqref{eq:mixvolarea} 
as
$$
	\frac{1}{2}
	\int f \,
	dS_{[-u,u],C_1,\ldots,C_{n-2}} =
	\frac{1}{n-1}
	\int f \,
	dS_{\proj_{u^\perp}C_1,\ldots,\proj_{u^\perp}C_{n-2}}
$$
for $f=h_{C_{n-1}}$, where we used that $h_{\proj_EC}(u)=h_C(u)$ for $u\in E$. 
The identity extends by linearity to any difference of support functions
$f=h_K-h_L$. But as any $f\in C^2(S^{n-1})$ is of this form
(cf.\ Lemma \ref{lem:c2d2} below), the conclusion follows.
\end{proof}

A body $K\in\mathcal{K}^n_s$ is called a \emph{zonoid} if it is the 
limit of Minkowski sums of segments $[-u,u]$. The following equivalent
definition \cite[Theorem 3.5.3]{Sch14} is well known.

\begin{defn}
$K\in\mathcal{K}^n_s$ is called a \emph{zonoid} if
$$
	h_K(x) = \int h_{[-u,u]}(x)\,\eta(du)
$$
for some finite even measure $\eta$ on $S^{n-1}$, called the
\emph{generating measure} of $K$.
\end{defn}

The significance of zonoids for our purposes is that mixed volumes of 
zonoids can be expressed in terms of mixed volumes of projections by
Lemma \ref{lem:proj} and linearity. One simple 
illustration of this is the following fact.

\begin{lem}
\label{lem:planarzonoid}
In dimension $2$, any symmetric convex body $K\in\mathcal{K}^2_s$ is a 
zonoid with
$$
	h_K(x) = \frac{1}{4}\int 
	h_{[-u^\dagger,u^\dagger]}(x) \,S_K(du),
$$
where for $u\in S^1$ we denote by $u^\dagger\in S^1$ the 
clockwise rotation of $u$ by the angle $\frac{\pi}{2}$.
\end{lem}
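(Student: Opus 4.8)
The identity to be proven says precisely that $K=Z$, where $Z$ is the zonoid whose generating measure is the pushforward of $\frac14 S_K$ under the rotation $u\mapsto u^\dagger$; equivalently $h_Z(x)=\int_{S^1}h_{[-v,v]}(x)\,\eta(dv)$ for this measure $\eta$, which by $h_{[-v,v]}(x)=|\langle v,x\rangle|$ and a change of variables is the same as $\frac14\int_{S^1}h_{[-u^\dagger,u^\dagger]}(x)\,S_K(du)$. The first thing I would check is that $\eta$ is a legitimate generating measure: it is a finite measure of total mass $\frac14 S_K(S^1)$, and it is even because $S_K$ is even (as $K=-K$) and the rotation $\dagger$ commutes with the antipodal map. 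Hence $Z$ is a well-defined zonoid, and in particular $Z\in\mathcal{K}^2_s$.

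The plan is then to show that $S_Z=S_K$ and to conclude by Minkowski's uniqueness theorem. The key structural fact is that in dimension $2$ the area measure $S_{(\cdot)}$ of a single convex body is \emph{linear} in that body (it carries only one slot) and weakly continuous under Hausdorff convergence; approximating $Z$ by finite Minkowski sums of scaled segments $[-v,v]$ that discretize $\eta$ therefore gives $S_Z=\int_{S^1}S_{[-v,v]}\,\eta(dv)$. A direct computation---e.g.\ by approximating $[-v,v]$ by thin rectangles, or by specializing the second identity of Lemma~\ref{lem:proj} to $n=2$ with all $C_i=[-v,v]$---yields $S_{[-v,v]}=2(\delta_{v^\dagger}+\delta_{-v^\dagger})$ for $v\in S^1$. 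Substituting this, changing variables back through $\dagger$, and using that $\dagger$ applied twice is rotation by $\pi$, i.e.\ $u^{\dagger\dagger}=-u$, one gets $S_Z=\frac14\int_{S^1}2(\delta_{u^{\dagger\dagger}}+\delta_{-u^{\dagger\dagger}})\,S_K(du)=\frac12\int_{S^1}(\delta_{-u}+\delta_{u})\,S_K(du)=S_K$, where the last step again uses that $S_K$ is even.

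Finally, $S_K$ is the surface area measure of the body $K\in\mathcal{K}^2_s$, so it satisfies $\int_{S^1}u\,S_K(du)=0$ and is not concentrated on a pair of antipodal points; by Minkowski's existence and uniqueness theorem $K$ is, up to translation, the only convex body with nonempty interior having this area measure. Since $Z$ has the same area measure---and is non-degenerate, because a segment or a point would have area measure supported on two antipodal points, which $S_K$ is not---we conclude $Z=K$ up to translation, and the translation is trivial since both bodies are symmetric. This proves the displayed formula and, in particular, exhibits every $K\in\mathcal{K}^2_s$ as a zonoid.

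I do not expect a genuine obstacle here. The only slightly delicate points are keeping the bookkeeping with $\dagger$ straight---in particular $\dagger^2=-\mathrm{id}$, which is exactly what makes the constant $\frac14$ and the evenness of $S_K$ conspire correctly---and the appeal to Minkowski's uniqueness theorem. If a self-contained argument is preferred, one can instead reduce by Hausdorff approximation to smooth $K$ with positive curvature, parametrize $h_K$ by the polar angle and write $dS_K=\rho_K(\theta)\,d\theta$ with $\rho_K=h_K+h_K''$, and check that $\psi\mapsto\frac14\int_0^{2\pi}|\sin(\theta-\psi)|\,\rho_K(\theta)\,d\theta$ and $h_K$ solve the same equation $g+g''=\rho_K$---using the distributional identity $\frac{d^2}{dt^2}|\sin t|=-|\sin t|+2\sum_{k}\delta_{k\pi}$ together with $\rho_K(\psi)=\rho_K(\psi+\pi)$, valid because $K$ is symmetric---and have the same ($\pi$-periodic) symmetry, which forces them to coincide.
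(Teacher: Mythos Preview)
Your argument is correct, but it takes a different route from the paper. The paper proceeds by a direct computation of the right-hand side: using the symmetry $h_{[-u^\dagger,u^\dagger]}(x)=|\langle u^\dagger,x\rangle|=|\langle u,x^\dagger\rangle|=h_{[-x^\dagger,x^\dagger]}(u)$, one recognises $\tfrac12\int h_{[-u^\dagger,u^\dagger]}(x)\,S_K(du)$ as the mixed volume $\V([-x^\dagger,x^\dagger],K)$, which by Lemma~\ref{lem:proj} equals $\Vol(\proj_{\mathrm{span}\{x\}}K)=h_K(x)+h_K(-x)=2h_K(x)$. This is shorter and more self-contained, since it avoids the appeal to Minkowski's uniqueness theorem.

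Your approach instead identifies the candidate zonoid $Z$, computes its area measure via the linearity of $S_{(\cdot)}$ in dimension $2$ together with the explicit formula $S_{[-v,v]}=2(\delta_{v^\dagger}+\delta_{-v^\dagger})$, and then invokes Minkowski uniqueness. This is a perfectly valid alternative; it trades the paper's ``swap $u$ and $x$'' trick for a structural argument about area measures, at the cost of importing the uniqueness theorem. The alternative smooth computation you sketch at the end is also correct but unnecessary once either route is in hand.
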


\begin{proof}
Note first that $h_{[-u^\dagger,u^\dagger]}(x) =
|\langle u^\dagger,x\rangle| = |\langle u,x^\dagger\rangle| =
h_{[-x^\dagger,x^\dagger]}(u)$. Thus by Lemma \ref{lem:proj}, we can write 
for any $K\in \mathcal{K}^2$
$$
	\frac{1}{2}
	\int h_{[-u^\dagger,u^\dagger]}(x) \,S_K(du) =
	\V([-x^\dagger,x^\dagger],K) =
	\Vol(\proj_{\mathrm{span}\{x\}}K) =
	h_K(x)+h_K(-x).
$$
As $K\in\mathcal{K}^2_s$ is symmetric, we have
$h_K(x)+h_K(-x)=2h_K(x)$.
\end{proof}

\subsection{Smooth bodies and the Hilbert operator}
\label{sec:hilbert}

A support function $h_K$ may be viewed either as a function on 
$S^{n-1}$, or as a $1$-homogeneous function on $\mathbb{R}^n$. In 
particular, if $h_K$ is a $C^2$ function on $S^{n-1}$, then its gradient 
$\nabla h_K$ in $\mathbb{R}^n$ is $0$-homogeneous, and thus its Hessian 
$\nabla^2h_K(x)$ in $\mathbb{R}^n$ is a linear map from $x^\perp$ to 
itself. We denote the restriction of $\nabla^2h_K(x)$ to $x^\perp$ as 
$D^2h_K(x)$. For a general function $f\in C^2(S^{n-1})$, the restricted 
Hessian $D^2f$ is defined analogously by applying the above construction 
to the $1$-homogeneous extension of $f$.

We now recall the following basic facts \cite[\S 2.1]{SvH18}. Here we 
write $A\ge 0$ ($A>0$) to indicate that a symmetric matrix $A$ is 
positive semidefinite (positive definite).

\begin{lem}
\label{lem:c2d2}
Let $f\in C^2(S^{n-1})$. Then the following hold:
\begin{enumerate}[a.]
\itemsep\abovedisplayskip
\item $f=h_K$ for some convex body $K$ if and only if $D^2f\ge 0$.
\item For any convex body $L$ such that $h_L\in C^2(S^{n-1})$ and 
$D^2h_L>0$, there is a convex body $K$ and $a>0$ so that $f=a(h_K-h_L)$.
\end{enumerate}
\end{lem}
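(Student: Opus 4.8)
\textbf{Proof proposal for Lemma \ref{lem:c2d2}.}

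The plan is to reduce both statements to the classical description of support functions of convex bodies with $C^2$ boundary and positive curvature, by way of the $1$-homogeneous extension. First I would set up the correspondence carefully: given $f\in C^2(S^{n-1})$, extend it $1$-homogeneously to $\bar f$ on $\mathbb{R}^n\setminus\{0\}$, so that $\nabla\bar f$ is $0$-homogeneous and $\nabla^2\bar f(x)$ annihilates $x$ (by Euler's relation applied to $\nabla\bar f$) and maps $x^\perp$ into $x^\perp$. This is precisely the operator whose restriction is $D^2f(x)$. The key classical fact to invoke (see \cite[\S 1.7, \S 2.5]{Sch14}) is that a $1$-homogeneous function $\bar f\in C^2(\mathbb{R}^n\setminus\{0\})$ is the support function of a convex body if and only if $\nabla^2\bar f\ge 0$ everywhere, equivalently $D^2f\ge 0$ on $S^{n-1}$; convexity of $\bar f$ is equivalent to $\nabla^2\bar f\ge 0$, and a $1$-homogeneous convex function is automatically a support function of the (compact, since $f$ is bounded) convex body $K=\{z:\langle z,x\rangle\le f(x)\ \forall x\}$. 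This gives part (a).

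For part (b), the idea is a standard perturbation/openness argument. Given $L$ with $h_L\in C^2(S^{n-1})$ and $D^2h_L>0$, the quantity $\lambda(x):=\lambda_{\min}(D^2h_L(x))$ is a continuous positive function on the compact set $S^{n-1}$, hence bounded below by some $\varepsilon>0$. For the given $f\in C^2(S^{n-1})$, the restricted Hessian $D^2f$ is bounded in operator norm, say $\|D^2f(x)\|\le M$ for all $x$. Then for $a$ small enough that $aM<\varepsilon$ we have $D^2(h_L+af)(x)=D^2h_L(x)+aD^2f(x)\ge(\varepsilon-aM)I>0$ on $x^\perp$, so by part (a) the function $h_L+af$ is the support function of some convex body $K$. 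Rewriting, $f=\tfrac{1}{a}(h_K-h_L)$, which is the claimed form after relabeling $a\leftarrow\tfrac1a$; note $h_K=h_L+af\in C^2(S^{n-1})$ as well.

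The main obstacle is the careful bookkeeping in part (a): one must verify that $\nabla^2\bar f(x)$ really does leave $x^\perp$ invariant and vanish on $\mathbb{R}x$ (so that $D^2f$ is well-defined as a symmetric operator on $x^\perp$), and that positive semidefiniteness of this restricted operator at every point is genuinely equivalent to convexity of the full $1$-homogeneous extension $\bar f$ on $\mathbb{R}^n$ — the radial degeneracy of $\nabla^2\bar f$ is exactly what makes ``$D^2f\ge0$'' rather than ``$\nabla^2 f\ge 0$ on the sphere'' the right condition. Once that equivalence is in hand, identifying the resulting convex function with the support function of $K=\{z:\langle z,x\rangle\le f(x)\}$ is routine (the body is bounded because $f$ is bounded on $S^{n-1}$, and nonempty with interior when $D^2f>0$), and part (b) is then a soft continuity argument as above.
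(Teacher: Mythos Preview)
Your argument is correct and is the standard one. Note, however, that the paper does not actually give a proof of this lemma: it simply records it as a known fact with the citation \cite[\S 2.1]{SvH18}. So there is no ``paper's own proof'' to compare against; you have supplied the proof that the paper chose to omit. Both parts are handled correctly: part (a) via the equivalence between convexity of the $1$-homogeneous extension and positive semidefiniteness of the restricted Hessian, and part (b) via the elementary compactness/perturbation argument showing $h_L+af$ has positive definite $D^2$ for small $a>0$.
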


A particularly useful class of bodies is the following.

\begin{defn}
$K\in\mathcal{K}^n$ is of class $C^k_+$ ($k\ge 2$) if $h_K\in 
C^k(S^{n-1})$ and $D^2h_K>0$.
\end{defn}

For our purposes, the importance of $C^k_+$ bodies is that they admit 
certain explicit representations of mixed volumes and mixed area measures. 
To define these, let us first recall that the mixed discriminant 
$\D(A_1,\ldots,A_{n-1})$ of $(n-1)$-dimensional matrices 
$A_1,\ldots,A_{n-1}$ is defined by the formula
$$
	\det(\lambda_1A_1+\cdots+\lambda_mA_m)
	=\sum_{i_1,\ldots,i_{n-1}=1}^m
	\D(A_{i_1},\ldots,A_{i_{n-1}})\,\lambda_{i_1}\cdots
	\lambda_{i_{n-1}}
$$
in analogy with the definition of mixed volumes. Mixed 
discriminants are symmetric and multilinear in their arguments, and 
$\D(A_1,\ldots,A_{n-1})>0$
for $A_1,\ldots,A_{n-1}>0$.
Moreover, we have the Alexandrov mixed discriminant inequality
\begin{equation}
\label{eq:mixdisc}
	\D(A,B,M_1,\ldots,M_{n-3})^2 \ge
	\D(A,A,M_1,\ldots,M_{n-3})\,\D(B,B,M_1,\ldots,M_{n-3})
\end{equation}
whenever $B,M_1,\ldots,M_{n-3}\ge 0$ and $A$ is a symmetric matrix. For 
these and other facts about mixed discriminants, see
\cite[\S 2.3 and \S 4]{SvH18}.

With these definitions in place, we have the following
\cite[Lemma 4.7]{SvH22}.

\begin{lem}
\label{lem:mvc2p}
Let $C_1,\ldots,C_{n-1}\in\mathcal{K}^n$ be of class $C^2_+$. Then
\begin{align*}
	dS_{C_1,\ldots,C_{n-1}} &= \D(D^2h_{C_1},\ldots,
	D^2h_{C_{n-1}})\,d\omega,
	\\
	\V(K,C_1,\ldots,C_{n-1}) &=
	\frac{1}{n}\int h_K\,\D(D^2h_{C_1},\ldots,
        D^2h_{C_{n-1}})\,d\omega
\end{align*}
for any convex body $K$, where $\omega$ denotes the surface measure on 
$S^{n-1}$.
\end{lem}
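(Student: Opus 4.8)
The plan is to reduce both identities to the classical formula for the surface area measure of a single smooth body, and then propagate it by multilinearity.

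First I would record the base case: for a single body $C\in\mathcal{K}^n$ of class $C^2_+$,
$$
	dS_{C,\ldots,C} = \det(D^2h_C)\,d\omega .
$$
This is the standard differential-geometric description of the surface area measure of a smooth strictly convex body --- $S_{C,\ldots,C}$ is the image of the $(n-1)$-dimensional Hausdorff measure on $\partial C$ under the Gauss map, whose Jacobian is the product of the principal radii of curvature, that is, $\det(D^2h_C)$ --- and I would simply cite it (e.g.\ \cite[\S 2.5]{Sch14}), after observing that the matrix $D^2h_C$ defined above (the restriction to $x^\perp$ of the Hessian of the $1$-homogeneous extension of $h_C$) is exactly the reverse Weingarten map, i.e.\ the matrix of principal radii of curvature.

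Next I would upgrade this to the mixed statement by polarization. The key points are that the class $C^2_+$ is stable under Minkowski sums with positive coefficients (a sum of $C^2$ support functions is $C^2$, and a sum of positive definite matrices is positive definite), and that $h\mapsto D^2h$ is linear, so for $C=\lambda_1C_1+\cdots+\lambda_mC_m$ with $\lambda_i>0$ and $C_i\in C^2_+$ one has $h_C=\sum_i\lambda_ih_{C_i}$ and hence $D^2h_C=\sum_i\lambda_iD^2h_{C_i}$. Applying the base case to $C$ and expanding the determinant by the definition of mixed discriminants gives
$$
	dS_{C,\ldots,C}
	= \det\Bigl(\textstyle\sum_i\lambda_iD^2h_{C_i}\Bigr)\,d\omega
	= \sum_{i_1,\ldots,i_{n-1}=1}^m
	\D(D^2h_{C_{i_1}},\ldots,D^2h_{C_{i_{n-1}}})\,
	\lambda_{i_1}\cdots\lambda_{i_{n-1}}\,d\omega ,
$$
while multilinearity of mixed area measures gives
$$
	dS_{C,\ldots,C}
	= \sum_{i_1,\ldots,i_{n-1}=1}^m
	S_{C_{i_1},\ldots,C_{i_{n-1}}}\,
	\lambda_{i_1}\cdots\lambda_{i_{n-1}} .
$$
Both sides are polynomials in $(\lambda_1,\ldots,\lambda_m)$ whose coefficients are (signed) measures on $S^{n-1}$, symmetric in the indices; integrating against an arbitrary continuous test function and comparing coefficients of the resulting polynomial identities forces equality of the coefficients, hence $dS_{C_{i_1},\ldots,C_{i_{n-1}}} = \D(D^2h_{C_{i_1}},\ldots,D^2h_{C_{i_{n-1}}})\,d\omega$. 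Taking $m=n-1$ and the indices $1,\ldots,n-1$ yields the first identity.

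Finally, the second identity is immediate from the integral representation \eqref{eq:mixvolarea}: for any convex body $K$ (smooth or not),
$$
	\V(K,C_1,\ldots,C_{n-1})
	= \frac1n\int h_K\,dS_{C_1,\ldots,C_{n-1}}
	= \frac1n\int h_K\,\D(D^2h_{C_1},\ldots,D^2h_{C_{n-1}})\,d\omega .
$$
I expect the only genuine obstacle to be the base case: passing from the ``abstract'' measure $S_{C,\ldots,C}$ (defined via the volume polynomial) to its concrete density $\det(D^2h_C)$ relies on the identification of $S_{C,\ldots,C}$ with the pushforward of Hausdorff measure on $\partial C$ under the Gauss map, which is classical but not entirely trivial; everything after that is bookkeeping with multilinearity.
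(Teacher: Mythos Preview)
Your argument is correct. The paper does not actually give a proof of this lemma; it simply cites \cite[Lemma 4.7]{SvH22} for the result. Your polarization argument (establish the diagonal case $dS_{C,\ldots,C}=\det(D^2h_C)\,d\omega$ via the Gauss map, then compare coefficients of the polynomial in $\lambda_1,\ldots,\lambda_m$ using that both the mixed area measure and the mixed discriminant are symmetric and multilinear) is the standard route and is essentially how this is done in the cited references and in \cite{Sch14}. The only comment is that you do not really need to integrate against a test function to compare coefficients --- two polynomials with measure-valued coefficients that agree on an open set of parameters already have equal coefficients, since evaluation on a Borel set is linear --- but this is cosmetic.
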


We now introduce the spectral interpretation of mixed volumes due to 
Hilbert. For simplicity, we will only consider the special case that is 
needed in this note; the same construction applies to general mixed volumes 
(cf.\ \cite{SvH18}).

Fix a body $K\in\mathcal{K}^n$ of class $C^2_+$ with the origin in 
its interior (so that $h_K>0$). Then we define a measure $\mu_K$ on 
$S^{n-1}$ as
$$
	d\mu_K := \frac{1}{nh_K} dS_{K,\ldots,K} =
	\frac{1}{nh_K}
	\D(D^2h_K,\ldots,D^2h_K)\,d\omega,
$$
and define the second-order differential operator $\mathscr{A}_K$ on
$S^{n-1}$ as
$$
	\mathscr{A}_Kf := h_K\frac{\D(D^2f,D^2h_K,\ldots,D^2h_K)}{
	\D(D^2h_K,\ldots,D^2h_K)}
$$
for $f\in C^2(S^{n-1})$. The positivity of mixed discriminants of positive
definite matrices implies that $\mathscr{A}_K$ is elliptic. 
Standard facts of elliptic regularity theory therefore imply
the following, cf.\ \cite[\S 3]{SvH18} or \cite[Theorem 5.3]{KM17}:
\begin{enumerate}[$\bullet$]
\itemsep\abovedisplayskip
\item $\mathscr{A}_K$ extends to a self-adjoint operator 
on $L^2(\mu_K)$ with $\mathrm{Dom}(\mathscr{A}_K)=H^2(S^{n-1})$.
\item $\mathscr{A}_K$ has a discrete spectrum, that is, it
has a countable sequence of eigenvalues $\lambda_1>\lambda_2\ge
\lambda_3\ge
\cdots$ of tending to $-\infty$, and its eigenfunctions span $L^2(\mu_K)$.
\item $\lambda_1=1$ is a simple eigenvalue, 
whose eigenspace is spanned by $h_K$.
\end{enumerate}
These facts will be invoked in the sequel without further comment.

The point of this construction is that, by Lemmas \ref{lem:c2d2}
and \ref{lem:mvc2p}, we evidently have
$$
	\langle f,\mathscr{A}_Kg\rangle :=
	\int f\,\mathscr{A}_Kg\,d\mu_K
	=
	\V(f,g,K,\ldots,K)
$$
for any $f,g\in C^2(S^{n-1})$. Mixed volumes of this type may therefore be 
viewed as quadratic forms of the operator $\mathscr{A}_K$, which furnishes 
various geometric inequalities with a spectral interpretation as explained 
in section \ref{sec:sketch}.

When $K\in\mathcal{K}^n_s$ is symmetric, it is readily verified from the 
definitions that $\mu_K$ is an even measure, and that $\mathscr{A}_K$ 
leaves the spaces $L^2(\mu_K)_{\rm even}$ and $L^2(\mu_K)_{\rm odd}$ of 
even and odd functions on $S^{n-1}$ invariant. As 
$L^2(\mu_K)=L^2(\mu_K)_{\rm even}\oplus L^2(\mu_K)_{\rm odd}$, it follows 
that when $K\in\mathcal{K}^n_s$ any $f\in L^2(\mu_K)_{\rm even}$ can be 
expressed as a linear combination of the \emph{even} eigenfunctions of 
$\mathscr{A}_K$; cf.\ \cite[\S 5.1]{KM17}.

\subsection{The Bochner method}
\label{sec:bochnerold}

As was explained in section \ref{sec:sketch}, Minkowski's second inequality 
\eqref{eq:minksecond}, and thus the Brunn-Minkowski inequality, is 
equivalent to the statement that the second largest eigenvalue of 
$\mathscr{A}_K$ satisfies $\lambda_2\le 0$ (cf.\ Lemma~\ref{lem:hilbert}). 
This idea was exploited by Hilbert to give a spectral proof of the 
Brunn-Minkowski inequality by means of an eigenvalue continuity argument.

A new proof of the above spectral condition was discovered by 
Shenfeld and the author in \cite{SvH18}. This proof is based on the 
elementary fact that the condition $\lambda_2\le 0$ would follow directly 
from the Lichnerowicz condition 
\begin{equation} 
\label{eq:oldbochner}
	\langle \mathscr{A}_Kf,\mathscr{A}_Kf\rangle \ge
	\langle f,\mathscr{A}_Kf\rangle\quad\mbox{for all }f.
\end{equation}
Indeed, if $\mathscr{A}_Kf=\lambda f$, then \eqref{eq:oldbochner} yields 
$\lambda^2\ge\lambda$, i.e., $\lambda\ge 1$ or $\lambda\le 0$. As 
$1=\lambda_1>\lambda_2$ by elliptic regularity theory, 
the conclusion $\lambda_2\le 0$ follows.
While this is merely a reformulation of the problem, 
the beauty of \eqref{eq:oldbochner} is that it is an 
immediate consequence of the following identity that admits a one-line 
proof.

\begin{lem}[Bochner identity]
\label{lem:bochner}
Let $K\in\mathcal{K}^n$ be of class $C^2_+$ and let $f\in C^2$. Then
\begin{gather}
\label{eq:bochner}
\begin{aligned}
	&\langle \mathscr{A}_Kf,\mathscr{A}_Kf\rangle -
	\langle f,\mathscr{A}_Kf\rangle = \\
	&
	\int \frac{h_K}{n}\bigg\{
	\frac{\D(D^2f,D^2h_K,\ldots,D^2h_K)^2}{
	\D(D^2h_K,\ldots,D^2h_K)}-
	\D(D^2f,D^2f,D^2h_K,\ldots,D^2h_K)\bigg\} \,d\omega.
\end{aligned}
\end{gather}
\end{lem}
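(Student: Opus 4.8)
The plan is to expand the left-hand side of \eqref{eq:bochner} straight from the definitions of $\mu_K$ and $\mathscr{A}_K$, observe that the first resulting term coincides verbatim with the first term on the right, and reduce the remaining equality to the symmetry of mixed volumes. First I would write $\langle\mathscr{A}_Kf,\mathscr{A}_Kf\rangle=\int(\mathscr{A}_Kf)^2\,d\mu_K$ and substitute $\mathscr{A}_Kf=h_K\,\D(D^2f,D^2h_K,\ldots,D^2h_K)/\D(D^2h_K,\ldots,D^2h_K)$ together with $d\mu_K=\tfrac{1}{nh_K}\D(D^2h_K,\ldots,D^2h_K)\,d\omega$. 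One factor of $h_K$ and one of $\D(D^2h_K,\ldots,D^2h_K)$ cancel, giving
$$
	\langle\mathscr{A}_Kf,\mathscr{A}_Kf\rangle=\frac1n\int h_K\,\frac{\D(D^2f,D^2h_K,\ldots,D^2h_K)^2}{\D(D^2h_K,\ldots,D^2h_K)}\,d\omega ,
$$
which is exactly the first term inside the braces in \eqref{eq:bochner}. The same cancellation applied to $\langle f,\mathscr{A}_Kf\rangle=\int f\,\mathscr{A}_Kf\,d\mu_K$ yields $\langle f,\mathscr{A}_Kf\rangle=\frac1n\int f\,\D(D^2f,D^2h_K,\ldots,D^2h_K)\,d\omega$.

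It therefore remains to prove the identity
$$
	\frac1n\int f\,\D(D^2f,D^2h_K,\ldots,D^2h_K)\,d\omega=\frac1n\int h_K\,\D(D^2f,D^2f,D^2h_K,\ldots,D^2h_K)\,d\omega ,
$$
that is, that one may move the ``$f$'' out of the integrated slot into the discriminant at the cost of moving an $h_K$ the other way. To establish it I would first reduce to the case where $f$ is a support function: by Lemma \ref{lem:c2d2}(b) (applied with a fixed auxiliary body of class $C^2_+$, and with the scaling constant taken large enough that the other body is also of class $C^2_+$) we may write $f=a(h_M-h_L)$ with $M,L$ of class $C^2_+$, and since both sides depend on $f$ multilinearly through the two slots occupied by $D^2f$, it suffices to treat $f\in\{h_M,h_L\}$. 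For $f=h_C$ a support function of a $C^2_+$ body, Lemma \ref{lem:mvc2p} identifies the left-hand side with $\V(C,C,K,\ldots,K)$ and the right-hand side with $\V(K,C,C,K,\ldots,K)$, and these agree by the symmetry of mixed volumes. Combining the two displays then proves \eqref{eq:bochner}.

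I do not expect a genuine obstacle here — this is the promised ``one-line proof'' — but the one point needing a word of care is the passage from support functions to an arbitrary $f\in C^2(S^{n-1})$: this is exactly what the representation in Lemma \ref{lem:c2d2}(b) and the multilinearity of mixed discriminants are for, noting that $\D(D^2f,D^2f,D^2h_K,\ldots,D^2h_K)$ is well defined for any symmetric-matrix field $D^2f$, whether or not $f$ is a support function. Everything else is pure bookkeeping with the definitions of $\mu_K$ and $\mathscr{A}_K$.
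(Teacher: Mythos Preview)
Your proposal is correct and follows essentially the same route as the paper: expand $\langle\mathscr{A}_Kf,\mathscr{A}_Kf\rangle$ from the definitions, and for $\langle f,\mathscr{A}_Kf\rangle$ use symmetry of mixed volumes together with Lemma~\ref{lem:mvc2p} to swap the roles of $f$ and $h_K$. One small imprecision: when you expand $f=a(h_M-h_L)$ bilinearly you must also verify the cross terms (e.g.\ $\int h_M\,\D(D^2h_L,D^2h_K,\ldots)\,d\omega=\int h_K\,\D(D^2h_M,D^2h_L,D^2h_K,\ldots)\,d\omega$), not just the diagonal cases $f\in\{h_M,h_L\}$---but these follow from the same Lemma~\ref{lem:mvc2p} plus symmetry, so no new idea is needed.
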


\begin{proof}
The identity is immediate from the definitions of $\mathscr{A}_K$
and $\mu_K$, and as
$$
	\langle f,\mathscr{A}_K f\rangle =
	\V(K,f,f,K,\ldots,K) 
	= \frac{1}{n}\int
	h_K\,\D(D^2f,D^2f,D^2h_K,\ldots,D^2h_K)\,d\omega
$$
by Lemma \ref{lem:mvc2p} and as mixed volumes 
are symmetric in their arguments.
\end{proof}

To deduce \eqref{eq:oldbochner}, it remains to recall that the integrand 
in \eqref{eq:bochner} is nonnegative by the following special case of the 
mixed discriminant inequality \eqref{eq:mixdisc}:
\begin{equation}
\label{eq:mixdsimple}
	\D(A,B,\ldots,B)^2  \ge
	\D(A,A,B,\ldots,B)\,
	\D(B,\ldots,B).
\end{equation}
In other words, the Bochner method reduces Minkowski's 
second inequality \eqref{eq:minksecond} to its 
linear-algebraic counterpart \eqref{eq:mixdsimple}.
This interpretation of the Bochner method will form the starting point 
for the main results of this note.

\begin{rem}
Lemma \ref{lem:bochner} is a trivial reformulation of the proof of 
\cite[Lemma 3.1]{SvH18}, as is explained in \cite[\S 6.3]{SvH18}. Moreover, 
it is observed there that in the special case that $K$ is the Euclidean 
ball, \eqref{eq:bochner} is precisely the classical (integrated) Bochner 
formula on $S^{n-1}$. One may therefore naturally view the above approach 
as an analogue of the Bochner method of differential geometry.

The identity \eqref{eq:bochner} was recently rediscovered by Milman 
\cite{Mil21}. A new insight of \cite{Mil21} is that \eqref{eq:bochner} may 
in fact be viewed as a true Bochner formula in the sense of differential 
geometry for any body $K$ of class $C^2_+$, by introducing a special 
centro-affine connection on $\partial K$. This interpretation does not 
appear to extend, however, to more general situations: for example, neither 
the more general identity that was used in \cite{SvH18} to prove the 
Alexandrov-Fenchel inequality, nor the ``Bochner identities'' of this 
note, are true Bochner formulas in the strictly formal sense, but should 
rather be viewed as a loose analogues of such a formula. The merits of 
taking a more liberal view on the Bochner method are illustrated by its 
diverse applications not only in convexity, but also in algebra and 
combinatorics \cite{SvH18,CKMS19,BL21,CP21}.
\end{rem}

\begin{rem}
The Bochner method should not be confused with a different 
method to prove Brunn-Minkowski inequalities that was developed by Reilly 
\cite{Rei80} and considerably refined by Kolesnikov and Milman in 
\cite{KM18,KM17}. The basis for Reilly's method is an integrated form of 
the classical Bochner formula on $\mathbb{R}^n$ (or on a manifold), combined with the solution of a certain Neumann 
problem. This method appears to be unrelated to the Bochner method for the 
operator $\mathscr{A}_K$.
\end{rem}

\section{Proof of Theorem \ref{thm:main}}
\label{sec:main}

The main step in the proof of Theorem \ref{thm:main} is the following 
analogue of \eqref{eq:oldbochner}.

\begin{thm}
\label{thm:superlich}
Let $K\in\mathcal{K}^n_s$ be a zonoid of class $C^2_+$ and
$f\in C^2(S^{n-1})_{\rm even}$. Then
\begin{equation}
\label{eq:logbochner}
        \langle \mathscr{A}_Kf,\mathscr{A}_Kf\rangle \ge
	\frac{n-2}{n-1}
        \langle f,\mathscr{A}_Kf\rangle +
	\frac{1}{n-1}\langle f,f\rangle.
\end{equation}
\end{thm}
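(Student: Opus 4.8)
The plan is to establish \eqref{eq:logbochner} by induction on $n$, the inductive step coming from a disintegration of both sides over the generating measure of $K$ followed by a single Cauchy--Schwarz estimate. Fix a generating measure $\eta$, so that $h_K=\int h_{[-u,u]}\,\eta(du)$, and for $u\in S^{n-1}$ set $K_u:=\proj_{u^\perp}K$ and $f_u:=f|_{S^{n-1}\cap u^\perp}$. Since $h_{K_u}$ is the restriction of $h_K$ to $u^\perp$, the restricted Hessian $D^2h_{K_u}(x)$ is a principal submatrix of $D^2h_K(x)$ for every $x\in S^{n-1}\cap u^\perp$; hence $K_u$ is again a zonoid of class $C^2_+$ with the origin in its interior, and it carries a Hilbert operator $\mathscr{A}_{K_u}$ and measure $\mu_{K_u}$ on $S^{n-1}\cap u^\perp\simeq S^{n-2}$. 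Write $\langle\cdot,\cdot\rangle_n$ and $\langle\cdot,\cdot\rangle_{n-1}$ for the inner products of $L^2(\mu_K)$ and $L^2(\mu_{K_u})$. The induction hypothesis will be Theorem \ref{thm:main} in dimension $n-1$ (which in turn follows from Theorem \ref{thm:superlich} in dimension $n-1$ via Lemma \ref{lem:km}), so the logical scheme is Theorem \ref{thm:main}$_{n-1}\Rightarrow$ Theorem \ref{thm:superlich}$_{n}\Rightarrow$ Theorem \ref{thm:main}$_{n}$.

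The first ingredient is the pair of disintegration identities
\[
\langle f,\mathscr{A}_Kg\rangle_n=\frac2n\int_{S^{n-1}}\langle f_u,\mathscr{A}_{K_u}g_u\rangle_{n-1}\,\eta(du),
\qquad
\langle f,f\rangle_n=\frac2n\int_{S^{n-1}}\langle f_u,f_u\rangle_{n-1}\,\eta(du),
\]
valid for all $f,g\in C^2(S^{n-1})$: the first by expanding one copy of $K$ in $\V(f,g,K,\ldots,K)$ via $\eta$ and applying the first identity of Lemma \ref{lem:proj} (using that $\proj_{u^\perp}$ acts on support functions by restriction), the second by applying the second identity of Lemma \ref{lem:proj} to the measure $S_{K,\ldots,K}$. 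Specializing $g=h_K$ and using $\mathscr{A}_{K_u}h_{K_u}=h_{K_u}$ also gives $\langle h_{[-u,u]},\mathscr{A}_Kf\rangle_n=\tfrac2n\langle f_u,h_{K_u}\rangle_{n-1}$ and $\langle h_{[-u,u]},h_K\rangle_n=\tfrac2n\|h_{K_u}\|_{n-1}^2$, where for the non-smooth $h_{[-u,u]}$ one reads $\langle h_{[-u,u]},\mathscr{A}_Kf\rangle_n=\V(f,[-u,u],K,\ldots,K)$, extending the pairing by continuity.

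Now let $\Delta_{n-1}(K_u,\phi):=\|h_{K_u}\|_{n-1}^{-2}\langle\phi,h_{K_u}\rangle_{n-1}^{2}-\tfrac{n-2}{n-1}\langle\phi,\mathscr{A}_{K_u}\phi\rangle_{n-1}-\tfrac1{n-1}\langle\phi,\phi\rangle_{n-1}$ be the local log-Brunn--Minkowski deficit of $K_u$ in dimension $n-1$, i.e.\ the operator form of \eqref{eq:locallogbm} for $K_u$; it is invariant under $\phi\mapsto\phi+t\,h_{K_u}$, so by Lemma \ref{lem:c2d2} it is defined for all $\phi\in C^2$. Substituting the disintegration identities into $\tfrac2n\int\Delta_{n-1}(K_u,f_u)\,\eta(du)$ reassembles the $\langle f,\mathscr{A}_Kf\rangle_n$ and $\langle f,f\rangle_n$ contributions and leaves the identity
\[
\langle\mathscr{A}_Kf,\mathscr{A}_Kf\rangle_n-\tfrac{n-2}{n-1}\langle f,\mathscr{A}_Kf\rangle_n-\tfrac1{n-1}\langle f,f\rangle_n
=\frac2n\int_{S^{n-1}}\Delta_{n-1}(K_u,f_u)\,\eta(du)+\mathcal{R},
\]
with $\mathcal{R}:=\langle\mathscr{A}_Kf,\mathscr{A}_Kf\rangle_n-\frac2n\int_{S^{n-1}}\|h_{K_u}\|_{n-1}^{-2}\langle f_u,h_{K_u}\rangle_{n-1}^{2}\,\eta(du)$. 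By the induction hypothesis $\Delta_{n-1}(K_u,f_u)\ge0$ for every $u$, so it remains only to prove $\mathcal{R}\ge0$. Using the second paragraph, the subtracted integral equals $\int_{S^{n-1}}\langle h_{[-u,u]},h_K\rangle_n^{-1}\langle h_{[-u,u]},\mathscr{A}_Kf\rangle_n^{2}\,\eta(du)$, and Cauchy--Schwarz on the measure $\tfrac1n\,dS_{K,\ldots,K}$ applied to the nonnegative function $h_{[-u,u]}$ gives $\langle h_{[-u,u]},\mathscr{A}_Kf\rangle_n^{2}\le\langle h_{[-u,u]},h_K\rangle_n\,\big\langle h_{[-u,u]},(\mathscr{A}_Kf)^2/h_K\big\rangle_n$; dividing, integrating in $\eta$, and using $\int h_{[-u,u]}\,\eta(du)=h_K$ bounds this by $\langle h_K,(\mathscr{A}_Kf)^2/h_K\rangle_n=\langle\mathscr{A}_Kf,\mathscr{A}_Kf\rangle_n$. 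The base case $n=2$ requires no hypothesis: a direct one-dimensional computation shows $\Delta_1(K_u,f_u)=0$ for even $f_u$, so in dimension $2$ the identity collapses to $\langle\mathscr{A}_Kf,\mathscr{A}_Kf\rangle_2-\langle f,f\rangle_2=\mathcal{R}\ge0$, which is \eqref{eq:logbochner}.

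The single step with real content is choosing the right object to disintegrate against: comparing with the \emph{local} log-Brunn--Minkowski deficit of the projections $K_u$ — rather than with their Lichnerowicz-type deficit $\langle\mathscr{A}_{K_u}f_u,\mathscr{A}_{K_u}f_u\rangle_{n-1}-\cdots$, for which the analogous identity overshoots — is exactly what makes the leftover term $\mathcal{R}$ nonnegative for the elementary reason above, and it is the matching of the coefficients $\tfrac{n-2}{n-1},\tfrac1{n-1}$ across dimensions that accounts for the threshold $-\tfrac1{n-1}$ in Lemma \ref{lem:km}. The remaining work should be routine: justifying the disintegration identities for $C^2$ test functions, and handling the non-smooth $h_{[-u,u]}$ by Hausdorff approximation.
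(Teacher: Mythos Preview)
Your proposal is correct and is essentially the paper's own proof: the induction step is exactly Proposition~\ref{prop:induct} (Cauchy--Schwarz applied to the disintegration of $\langle\mathscr{A}_Kf,\mathscr{A}_Kf\rangle$ over $\eta$, followed by Lemma~\ref{lem:indlocal}, which is your $\Delta_{n-1}(K_u,f_u)\ge0$), merely repackaged by naming the deficit $\Delta_{n-1}$ and remainder $\mathcal{R}$ explicitly. The one cosmetic difference is that the paper handles the base case $n=2$ separately in Lemma~\ref{lem:planar} using the explicit planar generating measure of Lemma~\ref{lem:planarzonoid}, whereas you absorb it into the same scheme by observing that $\Delta_1\equiv0$ on even functions; both arguments reduce to the identical Cauchy--Schwarz step.
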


Before we proceed, let us complete the proof of Theorem \ref{thm:main}.

\begin{proof}[Proof of Theorem \ref{thm:main}]
Let $K\in\mathcal{K}^n_s$ be a zonoid of class $C^2_+$.
As $\mathscr{A}_K$ is essentially self-adjoint on $C^2(S^{n-1})$, 
\eqref{eq:logbochner} extends directly to any
even function $f\in\mathrm{Dom}(\mathscr{A}_K)$. Thus if
$f$ is any even eigenfunction of $\mathscr{A}_K$ with eigenvalue 
$\lambda$, Theorem \ref{thm:superlich} yields 
$$
	\lambda^2\ge \frac{n-2}{n-1}\,\lambda+\frac{1}{n-1},
$$
i.e., $\lambda\ge 1$ or $\lambda\le-\frac{1}{n-1}$. But recall that
the largest eigenvalue of $\mathscr{A}_K$ is $\lambda_1=1$ and its
eigenspace is spanned by $h_K$. Thus any even eigenfunction $f$ of 
$\mathscr{A}_K$ that is orthogonal to $h_K$ must have eigenvalue
$\lambda\le -\frac{1}{n-1}$. In particular, as any $f\in L^2(\mu_K)_{\rm 
even}$ is in the linear span of the even eigenfunctions
of $\mathscr{A}_K$, we obtain
$$
	\langle f,\mathscr{A}_Kf\rangle \le
	-\frac{1}{n-1}\langle f,f\rangle
	\quad
	\mbox{whenever }f\in C^2(S^{n-1})_{\rm even},~
	\langle f,h_K\rangle = 0.
$$
For any $L\in\mathcal{K}^n_s$ of class $C^2_+$, we may now choose
$f = h_L - \frac{\langle h_L,h_K\rangle}{\langle h_K,h_K\rangle}\,h_K$
and use 
\begin{gather*}
	\langle h_L,\mathscr{A}_Kh_L\rangle =\V(L,L,K,\ldots,K),
	\qquad
	\langle h_L,h_L\rangle = \frac{1}{n}\int 
	\frac{h_L^2}{h_K}\,dS_{K,\ldots,K},\\
	\langle h_L,\mathscr{A}_Kh_K\rangle=\langle h_L,h_K\rangle = 
	\V(L,K,\ldots,K)
\end{gather*}
to conclude the validity of \eqref{eq:locallogbm} when
$K,L$ are of class $C^2_+$.

To conclude the proof, it suffices to show that for any
$K,L\in\mathcal{K}^n_s$ such that $K$ is a zonoid, there exist
$K_n,L_n\in\mathcal{K}^n_s$ of class $C^2_+$ such that $K_n$ is a zonoid 
and $K_n\to K$, $L_n\to L$ in the Hausdorff metric; the validity
of \eqref{eq:locallogbm} then follows by the continuity of mixed 
volumes and area measures. Both statements are classical; an
approximation of $L$ by $C^2_+$ bodies is given in \cite[\S 3.4]{Sch14},
while the approximation of $K$ may be performed, for example, by choosing
$h_{K_n} = \int h_{E_{n,u}}\,\eta(du)$ where $\eta$ is the generating 
measure of $K$ and $E_{n,u}$ are ellipsoids such that $E_{n,u}\to[-u,u]$.
\end{proof}

The remainder of this section is devoted to the proof of Theorem 
\ref{thm:superlich}. In essence, the inequality \eqref{eq:logbochner} will 
follow from a ``Bochner identity'' in the spirit of \eqref{eq:bochner}. 
However, rather than reducing the validity of \eqref{eq:locallogbm} to a 
linear algebraic analogue as was done in section \ref{sec:bochnerold}, the 
Bochner method will be used here to reduce \eqref{eq:locallogbm} in 
dimension $n$ to its validity in dimension $n-1$. The conclusion then 
follows by induction. As will be explained below, the structure of the 
induction also provides an explanation for the appearance of the 
mysterious value $-\frac{1}{n-1}$ in Lemma \ref{lem:km}.

\subsection{The induction step}

We begin with the following observation.

\begin{lem}
\label{lem:indlocal}
Let $n\ge 3$, $K\in\mathcal{K}^n_s$ be a zonoid, and
$f=h_M-h_{M'}$ for $M,M'\in\mathcal{K}^n_s$.
Assume that Theorem \ref{thm:main} has been 
proved in dimension $n-1$. Then
\begin{multline*}
	\frac{\V([-u,u],f,K,\ldots,K)^2}{\V([-u,u],K,\ldots,K)}
	\ge\\ 
	\frac{n-2}{n-1}\,\V([-u,u],f,f,K,\ldots,K) +
	\frac{1}{n(n-1)}\int \frac{f^2}{h_K}\,dS_{[-u,u],K,\ldots,K}
\end{multline*}
for every $u\in S^{n-1}$.
\end{lem}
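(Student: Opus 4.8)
The plan is to reduce the inequality in dimension $n$ for the segment direction $u$ to the already-proved $(n-1)$-dimensional case of Theorem~\ref{thm:main} via the projection formulas of Lemma~\ref{lem:proj}. First I would rewrite all four terms appearing in the claimed inequality in terms of quantities living in the hyperplane $u^\perp \simeq \mathbb{R}^{n-1}$. By the first identity of Lemma~\ref{lem:proj}, $\frac{n}{2}\V([-u,u],C_1,\dots,C_{n-1}) = \V(\proj_{u^\perp}C_1,\dots,\proj_{u^\perp}C_{n-1})$, so with $\bar K := \proj_{u^\perp}K$ and $\bar f := h_M|_{u^\perp} - h_{M'}|_{u^\perp}$ (using $h_{\proj_EC}(x)=h_C(x)$ for $x\in E$, which makes $\bar f$ the difference of the support functions of $\proj_{u^\perp}M$ and $\proj_{u^\perp}M'$ as functions on $u^\perp$), the first three terms become, up to the common factor $\frac{2}{n}$, exactly $\bar\V(\bar f,\bar K,\dots,\bar K)$, $\bar\V(\bar K,\dots,\bar K)=\Vol_{n-1}(\bar K)$, and $\bar\V(\bar f,\bar f,\bar K,\dots,\bar K)$, the mixed volumes computed in $u^\perp$. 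For the last term I would use the second identity of Lemma~\ref{lem:proj}, $\frac{n-1}{2}S_{[-u,u],K,\dots,K} = S_{\bar K,\dots,\bar K}$ (the mixed area measure of $\bar K$ in $u^\perp$), together with the observation that $h_K$ restricted to $u^\perp$ equals $h_{\bar K}$; hence $\frac{1}{n(n-1)}\int \frac{f^2}{h_K}\,dS_{[-u,u],K,\dots,K} = \frac{2}{n(n-1)^2}\int \frac{\bar f^2}{h_{\bar K}}\,dS_{\bar K,\dots,\bar K}$, which up to the factor $\frac{2}{n}\cdot\frac{1}{n-1}$ is the last term in \eqref{eq:locallogbm} written in dimension $n-1$.

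Next, I would collect the bookkeeping: after dividing the whole desired inequality by $\frac{2}{n}$, the left-hand side is $\bar\V(\bar f,\bar K,\dots,\bar K)^2/\Vol_{n-1}(\bar K)$ (note that the square of a term carrying a factor $\frac{2}{n}$ divided by another such term still carries exactly one factor $\frac{2}{n}$, which is why the normalization works out cleanly), while the right-hand side is $\frac{n-2}{n-1}\bar\V(\bar f,\bar f,\bar K,\dots,\bar K) + \frac{1}{(n-1)^2}\int \frac{\bar f^2}{h_{\bar K}}\,dS_{\bar K,\dots,\bar K}$. This is precisely the content of \eqref{eq:locallogbm} in dimension $n-1$ with $n$ replaced by $n-1$ and with $h_L$ replaced by the (signed) function $\bar f$ — noting that $(n-1)-1 = n-2$ and $(n-1)^2$ match the coefficients $\frac{n-1}{n}$ and $\frac{1}{n^2}$ after the substitution $n\mapsto n-1$. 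So the remaining task is to invoke the $(n-1)$-dimensional case of Theorem~\ref{thm:main}.

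The main obstacle, and the point that needs care, is that $\bar f$ is only a difference of support functions of symmetric bodies in $u^\perp$, not itself a support function, whereas \eqref{eq:locallogbm} as stated quantifies over $L\in\mathcal{K}^{n-1}_s$. However, $\bar K = \proj_{u^\perp}K$ is itself a zonoid in $u^\perp$ (a projection of a zonoid is a zonoid — its generating measure is the pushforward of that of $K$ under $\proj_{u^\perp}$, restricted to $S^{n-1}\cap u^\perp$, and this is standard), so Theorem~\ref{thm:main} applies to $\bar K$. For a zonoid reference body the inequality \eqref{eq:locallogbm} is equivalent to the spectral statement $\langle g,\mathscr{A}_{\bar K}g\rangle \le -\frac{1}{n-2}\langle g,g\rangle$ for even $g$ orthogonal to $h_{\bar K}$ (Lemma~\ref{lem:km}), which is a genuinely quadratic inequality and therefore extends by bilinearity and continuity from $g=h_L$ to arbitrary even differences of support functions $g=\bar f$; equivalently, one argues directly that both sides of \eqref{eq:locallogbm} are continuous and quadratic in $h_L$ as an element of the dense subspace $C^2(S^{n-2})_{\rm even}$ of $L^2(\mu_{\bar K})_{\rm even}$ (using Lemma~\ref{lem:c2d2}(b) to approximate), so the inequality persists for the signed function $\bar f$. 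A minor additional point is that $\bar f$ need not be even on $u^\perp$ in general, but since $M,M'\in\mathcal{K}^n_s$ are symmetric their projections are symmetric in $u^\perp$, so $\bar f$ is indeed even; this is exactly where the symmetry hypothesis on $M,M'$ is used.

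Putting these pieces together: applying the (quadratic, hence extended to signed $\bar f$) inequality \eqref{eq:locallogbm} for the zonoid $\bar K$ in dimension $n-1$, then translating back through the two identities of Lemma~\ref{lem:proj} and multiplying by $\frac{2}{n}$, yields exactly the asserted inequality. I would present the argument by first stating the dictionary of four term-by-term identities, then the coefficient check, then the one-line invocation of the inductive hypothesis for $\bar K$; the only genuinely non-mechanical step is the passage from support functions to their differences in the $(n-1)$-dimensional inequality, which I would handle by the density/continuity remark above (or by citing that \eqref{eq:locallogbm} for zonoids is equivalent to a spectral gap and is therefore automatically valid for all of $C^2(S^{n-2})_{\rm even}$).
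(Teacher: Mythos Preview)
Your approach is essentially the paper's: project via Lemma~\ref{lem:proj} and invoke the inductive hypothesis for the zonoid $\proj_{u^\perp}K$. The one difference is in passing from support functions to their differences. The paper first reduces to $K$ of class $C^2_+$ and $f\in C^2$, writes $f=a(h_L-h_K)$ via Lemma~\ref{lem:c2d2}(b), and observes by expanding the squares that the inequality for $f$ is \emph{equivalent} to the same inequality with the genuine body $L$ in place of $f$ (this works because $h_K$ lies in the radical of the associated quadratic form); after projection one lands directly on Theorem~\ref{thm:main} in dimension $n-1$ for $\proj_{u^\perp}L$, and a final approximation removes the smoothness hypotheses. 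Your spectral route is also valid, but be careful: ``quadratic, hence extends by bilinearity from $g=h_L$ to differences'' is not a correct argument on its own (a quadratic form nonnegative on a cone need not be nonnegative on its span); the extension really rests on the radical property of $h_{\bar K}$, or equivalently on the spectral bound already quantifying over \emph{all} even $g\perp h_{\bar K}$. You also need $\bar K$ of class $C^2_+$ for $\mathscr{A}_{\bar K}$ to be defined, so the approximation step you omit is unavoidable in your version as well.
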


\begin{proof}
Assume first that $K$ is a zonoid of class $C^2_+$ and that
$f\in C^2(S^{n-1})_{\rm even}$. Then
by Lemma \ref{lem:c2d2}, there exists a convex body $L\in\mathcal{K}_s^n$ 
of class $C^2_+$ and $a>0$ such that $f= a(h_L-h_K)$. By expanding the 
squares, the inequality in the statement is readily seen to be 
equivalent to the inequality
\begin{multline*}
	\frac{\V([-u,u],L,K,\ldots,K)^2}{\V([-u,u],K,\ldots,K)}
	\ge\\ \frac{n-2}{n-1}\,\V([-u,u],L,L,K,\ldots,K) +
	\frac{1}{n(n-1)}\int \frac{h_L^2}{h_K}\,dS_{[-u,u],K,\ldots,K}.
\end{multline*}
By Lemma \ref{lem:proj}, this is further equivalent to
\begin{multline*}
	\frac{\V(\proj_{u^\perp}L,\proj_{u^\perp}K,\ldots,
	\proj_{u^\perp}K)^2}{
	\V(\proj_{u^\perp}K,\ldots,
        \proj_{u^\perp}K)} \ge \\
	\frac{n-2}{n-1}\,
	\V(\proj_{u^\perp}L,\proj_{u^\perp}L,
	\proj_{u^\perp}K,\ldots,\proj_{u^\perp}K) +
	\frac{1}{(n-1)^2}
	\int \frac{h_{\proj_{u^\perp}L}^2}{h_{\proj_{u^\perp}K}}
	\,dS_{\proj_{u^\perp}K,\ldots,\proj_{u^\perp}K},
\end{multline*}
where we used that $h_{\proj_{u^\perp}L}(x)=h_L(x)$ for $x\in u^\perp$.
But as $\proj_{u^\perp}K$ is a zonoid, the latter inequality follows 
immediately from Theorem \ref{thm:main} in dimension $n-1$.
It remains to extend the conclusion to general $K$ and $f=h_M-h_{M'}$ by 
approximating $K,M,M'$ by $C^2_+$ bodies as in the proof of Theorem 
\ref{thm:main}.
\end{proof}

We are now ready to perform the induction step
in the proof of Theorem \ref{thm:superlich}.

\begin{prop}
\label{prop:induct}
Let $n\ge 3$, and assume that Theorem \ref{thm:main} has been proved in 
dimension $n-1$. Then the conclusion of Theorem \ref{thm:superlich}
holds in dimension $n$.
\end{prop}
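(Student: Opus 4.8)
The plan is to prove the inequality \eqref{eq:logbochner} by establishing a pointwise "Bochner identity'' that expresses the left-hand side $\langle\mathscr{A}_Kf,\mathscr{A}_Kf\rangle$ in a form to which Lemma \ref{lem:indlocal} applies. The key structural observation is that for a zonoid $K$ with generating measure $\eta$, we can write $h_K = \int h_{[-u,u]}\,\eta(du)$, and hence by multilinearity of mixed volumes and mixed area measures, every mixed volume $\V(g_1,g_2,K,\ldots,K)$ and every integral $\int \frac{g^2}{h_K}\,dS_{K,\ldots,K}$ decomposes as an integral against $\eta$ (or a product measure $\eta^{\otimes k}$) of the corresponding quantities with some of the $K$'s replaced by segments $[-u,u]$. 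The idea is to use this to split one copy of $K$ in the definition of $\mathscr{A}_K$ off as a segment, apply the dimension-$(n-1)$ projected inequality from Lemma \ref{lem:indlocal} to the resulting fiber quantities, and then integrate back up against $\eta$.

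Concretely, first I would reduce to $K$ of class $C^2_+$ and $f\in C^2(S^{n-1})_{\rm even}$ (the general case follows by the same approximation argument as in the proof of Theorem \ref{thm:main}, using that mixed volumes and the integral $\int \frac{f^2}{h_K}dS$ are continuous under Hausdorff convergence). Next, using $d\mu_K = \frac{1}{nh_K}dS_{K,\ldots,K}$, rewrite all three terms in \eqref{eq:logbochner}: $\langle f,\mathscr{A}_Kf\rangle = \V(f,f,K,\ldots,K)$, $\langle f,f\rangle = \frac{1}{n}\int \frac{f^2}{h_K}dS_{K,\ldots,K}$, and $\langle\mathscr{A}_Kf,\mathscr{A}_Kf\rangle = \frac{1}{n}\int \frac{(\mathscr{A}_Kf)^2}{h_K}dS_{K,\ldots,K}$. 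Now I would peel off one copy of $K$ from the area measure in each term, writing $S_{K,\ldots,K} = \frac{2}{n-1}\int S_{[-u,u],K,\ldots,K}\,\eta(du)$ via the generating measure and the (multilinear extension of the) second identity in Lemma \ref{lem:proj}—more precisely, from $h_K=\int h_{[-u,u]}\eta(du)$ and multilinearity, $S_{K,\dots,K}$ (with $n-1$ copies) equals $\int S_{[-u,u],K,\dots,K}\,\eta(du)$ with $n-2$ copies of $K$. This converts \eqref{eq:logbochner} into a statement about $\eta$-averages of fiberwise quantities on $u^\perp$. The crucial point will be to match the operator $\mathscr{A}_Kf$ with the projected data: since $\mathscr{A}_Kf$ is itself (up to the normalizing density) the Radon--Nikodym-type object $h_K\,\D(D^2f,D^2h_K,\ldots)/\D(D^2h_K,\ldots)$, one must check that the "$L$'' playing the role in Lemma \ref{lem:indlocal} can be taken consistently across fibers—this is exactly where writing $f=a(h_L-h_K)$ as in Lemma \ref{lem:indlocal} and using the algebra of differences of support functions pays off.

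The main obstacle I expect is the bookkeeping needed to see that Lemma \ref{lem:indlocal}, applied fiberwise and integrated against $\eta$, reproduces \emph{exactly} the coefficients $\frac{n-2}{n-1}$ and $\frac{1}{n-1}$ in \eqref{eq:logbochner}, rather than something weaker. The dimensional shift is delicate: Lemma \ref{lem:indlocal} carries a factor $\frac{1}{n(n-1)}$ on the integral term and $\frac{n-2}{n-1}$ on the mixed-volume term (these being the dimension-$(n-1)$ incarnations of the coefficients in \eqref{eq:locallogbm}), and the normalizing denominator $\V([-u,u],K,\ldots,K)$ must combine correctly with the $\eta$-average and with the factor $h_K$ hidden in $\mathscr{A}_Kf$. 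One technical subtlety is that $\langle\mathscr{A}_Kf,\mathscr{A}_Kf\rangle$ is genuinely quadratic in $\mathscr{A}_Kf$, so after peeling off a segment it is not literally of the form $\V([-u,u],g,g,K,\ldots,K)$ for a support-function difference $g$ independent of $u$; resolving this—likely by a Cauchy--Schwarz or Jensen step in the $\eta$-integration, comparing $\left(\int \frac{\V([-u,u],f,K,\dots)}{\V([-u,u],K,\dots)}\,(\dots)\right)^2$ against $\int \frac{\V([-u,u],f,K,\dots)^2}{\V([-u,u],K,\dots)}\,(\dots)$—is the heart of the argument and the place where the zonoid hypothesis is truly used. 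Once the fiberwise inequality of Lemma \ref{lem:indlocal} is in hand and the averaging is carried out correctly, \eqref{eq:logbochner} should drop out, completing the induction step.
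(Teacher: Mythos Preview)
Your overall plan is the same as the paper's: disintegrate via the generating measure $\eta$, use Cauchy--Schwarz to produce the ratio $\V([-u,u],f,K,\ldots,K)^2/\V([-u,u],K,\ldots,K)$, apply Lemma~\ref{lem:indlocal} fiberwise, and reassemble. The right-hand side terms $\langle f,\mathscr{A}_Kf\rangle$ and $\langle f,f\rangle$ indeed work exactly as you describe. The gap is in your handling of $\langle\mathscr{A}_Kf,\mathscr{A}_Kf\rangle$: you propose peeling a segment out of the \emph{area measure} and then applying Cauchy--Schwarz or Jensen in the \emph{$\eta$-variable}. Neither works. If you peel from $S_{K,\ldots,K}$, Cauchy--Schwarz in the remaining sphere integral with weight $h_K\,dS_{[-u,u],K,\ldots,K}$ produces $\bigl(\tfrac{1}{n}\int \mathscr{A}_Kf\,dS_{[-u,u],K,\ldots,K}\bigr)^2$ in the numerator, and this is \emph{not} equal to $\V([-u,u],f,K,\ldots,K)^2$: self-adjointness of $\mathscr{A}_K$ holds only with respect to $\mu_K$, not with respect to the segment-weighted measure, so you cannot trade $\mathscr{A}_Kf$ for $f$ here. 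And the Jensen step you describe, comparing $\bigl(\int (\cdot)\,\eta(du)\bigr)^2$ with $\int (\cdot)^2\,\eta(du)$, points the wrong way.

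The fix is simple but essential: peel the \emph{explicit} $h_K$ factor, not the one inside the area measure. Using Lemma~\ref{lem:mvc2p}, write
\[
\langle\mathscr{A}_Kf,\mathscr{A}_Kf\rangle
=\frac{1}{n}\int h_K\,\frac{\D(D^2f,D^2h_K,\ldots,D^2h_K)^2}{\D(D^2h_K,\ldots,D^2h_K)}\,d\omega
=\int\biggl[\frac{1}{n}\int h_{[-u,u]}\,\frac{\D_f^2}{\D_K}\,d\omega\biggr]\eta(du),
\]
with the shorthand $\D_f=\D(D^2f,D^2h_K,\ldots)$, $\D_K=\D(D^2h_K,\ldots)$. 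Now apply Cauchy--Schwarz in the \emph{$\omega$-integral}, for each fixed $u$, with weight $h_{[-u,u]}\D_K\,d\omega$:
\[
\frac{1}{n}\int h_{[-u,u]}\,\frac{\D_f^2}{\D_K}\,d\omega
\ \ge\
\frac{\bigl(\tfrac{1}{n}\int h_{[-u,u]}\,\D_f\,d\omega\bigr)^2}{\tfrac{1}{n}\int h_{[-u,u]}\,\D_K\,d\omega}
=\frac{\V([-u,u],f,K,\ldots,K)^2}{\V([-u,u],K,\ldots,K)},
\]
the last identity again by Lemma~\ref{lem:mvc2p}. Integrating this against $\eta$ and invoking Lemma~\ref{lem:indlocal} then reproduces the coefficients $\tfrac{n-2}{n-1}$ and $\tfrac{1}{n-1}$ exactly as you anticipated, with no further bookkeeping. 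This is precisely the paper's argument.
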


\begin{proof}
Let $n\ge 3$, $f\in C^2(S^{n-1})_{\rm even}$, and
$K\in\mathcal{K}^n_s$ be a zonoid of class $C^2_+$ with generating measure 
$\eta$. We may write
$$
	\langle \mathscr{A}_K f,\mathscr{A}_K f\rangle =
	\frac{1}{n}
	\int 
	\int h_{[-u,u]}
	\frac{\D(D^2f,D^2h_K,\ldots,D^2h_K)^2}{
	\D(D^2h_K,\ldots,D^2h_K)}\,d\omega
	\,\eta(du)
$$
by the definitions of $\mathscr{A}_K,\mu_K$ and as
$h_K=\int h_{[-u,u]}\,\eta(du)$. Now note that
\begin{align*}
	&\frac{1}{n}\int h_{[-u,u]}
	\frac{\D(D^2f,D^2h_K,\ldots,D^2h_K)^2}{
	\D(D^2h_K,\ldots,D^2h_K)}\,d\omega \ge
	\\ &\qquad
	\frac{\big(\frac{1}{n}\int 
	h_{[-u,u]}\D(D^2f,D^2h_K,\ldots,D^2h_K)\,d\omega\big)^2}{
	\frac{1}{n}
	\int h_{[-u,u]}\D(D^2h_K,\ldots,D^2h_K)\,d\omega} 
	=
	\frac{\V([-u,u],f,K,\ldots,K)^2}{\V([-u,u],K,\ldots,K)}
\end{align*}
for any $u$
by Cauchy-Schwarz and Lemma \ref{lem:mvc2p}. We therefore obtain
\begin{align*}
	&\langle \mathscr{A}_K f,\mathscr{A}_K f\rangle \ge
	\int \frac{\V([-u,u],f,K,\ldots,K)^2}{\V([-u,u],K,\ldots,K)}
	\,\eta(du)
	\\
	&\quad\ge
	\int
	\bigg(
	\frac{n-2}{n-1}\,\V([-u,u],f,f,K,\ldots,K) +
	\frac{1}{n(n-1)}\int \frac{f^2}{h_K}\,dS_{[-u,u],K,\ldots,K}
	\bigg)\eta(du)
	\\ &\quad=
	\frac{n-2}{n-1}\langle f,\mathscr{A}_Kf\rangle +
	\frac{1}{n-1}\langle f,f\rangle
\end{align*}
using Lemma \ref{lem:indlocal} and $h_K=\int h_{[-u,u]}\,\eta(du)$.
\end{proof}

\begin{rem}
\label{rem:protoequality}
While we find it cleaner to formulate the proof of Proposition 
\ref{prop:induct} in terms of inequalities, one may in principle interpret 
this proof as arising from a Bochner \emph{identity} in 
the spirit of \eqref{eq:bochner}: indeed, combining the proofs of Lemma 
\ref{lem:indlocal} and Proposition \ref{prop:induct} yields
for $f=a(h_L-h_K)$
\begin{align*}
	&\langle \mathscr{A}_K f,\mathscr{A}_K f\rangle -
	\frac{n-2}{n-1}\langle f,\mathscr{A}_Kf\rangle -
	\frac{1}{n-1}\langle f,f\rangle =
	\\
	&
	\quad\int 
	\int \frac{h_{[-u,u]}}{h_K}
	\bigg(
	\mathscr{A}_Kf
	-
	\frac{\V([-u,u],f,K,\ldots,K)}{\V([-u,u],K,\ldots,K)}h_K
	\bigg)^2\,
	d\mu_K\,\eta(du) + \mbox{} \\
	&\quad\frac{2a^2}{n}\int\bigg(
	\frac{\V(\proj_{u^\perp}L,\proj_{u^\perp}K,\ldots,
	\proj_{u^\perp}K)^2}{
	\V(\proj_{u^\perp}K,\ldots,
        \proj_{u^\perp}K)} -
	\frac{n-2}{n-1}\,
	\V(\proj_{u^\perp}L,\proj_{u^\perp}L,
	\proj_{u^\perp}K,\ldots,\proj_{u^\perp}K) 
	\\ &\qquad\qquad\qquad -
	\frac{1}{(n-1)^2}
	\int \frac{h_{\proj_{u^\perp}L}^2}{h_{\proj_{u^\perp}K}}
	\,dS_{\proj_{u^\perp}K,\ldots,\proj_{u^\perp}K}	
	\bigg)\,
	\eta(du),
\end{align*}
where the two terms on the right-hand side are the deficits of
the two inequalities used in the proof (the Cauchy-Schwarz 
inequality and \eqref{eq:locallogbm} in dimension $n-1$, respectively).
While it would be difficult to recognize this identity as a Bochner 
formula in the sense of differential geometry, it 
plays precisely the same role in the present proof as the Bochner
identity \eqref{eq:bochner} in section \ref{sec:bochnerold}.

Let us further note that an even eigenfunction $\mathscr{A}_Kf=\lambda 
f$ yields equality in \eqref{eq:logbochner} if and only if $\lambda=1$ or 
$\lambda=-\frac{1}{n-1}$. When this is the case, the right-hand side of 
the above Bochner identity must vanish. It then follows from the first 
term on the right that $f$ must be proportional to $h_K$, so that 
$\lambda=1$. In other words, when the zonoid $K$ is of class $C^2_+$, any 
even eigenfunction that is orthogonal to $h_K$ has eigenvalue strictly 
less than $-\frac{1}{n-1}$, and thus no nontrivial equality cases can 
arise in \eqref{eq:locallogbm}. 
However, nontrivial equality cases can arise when $K$ is nonsmooth, which 
will be analyzed in section \ref{sec:eq} by a variation on the above 
argument. 
\end{rem}

\begin{rem}
At first sight, the formulation of the spectral condition of Lemma 
\ref{lem:km} is rather mysterious: what is the significance of the
special value $-\frac{1}{n-1}$? The present proof provides one explanation 
for the appearance of this value: the constants in
\eqref{eq:logbochner} in dimension $n$ are precisely the same as those 
that appear in \eqref{eq:locallogbm} in dimension $n-1$, so that
the preservation of the
sharp threshold $\lambda\le -\frac{1}{n-1}$ by induction on the dimension
$n$ is explained by the quadratic relation \eqref{eq:logbochner}.
\end{rem}

\subsection{The induction base}

By Proposition \ref{prop:induct} and induction on the dimension, the proof 
of Theorem \ref{thm:superlich} will be complete in any dimension $n\ge 3$ 
once we establish its validity in dimension $n=2$. The latter is already 
known, however, by the results of \cite{BLYZ12} and Theorem 
\ref{thm:logequiv}. On the other hand, as we 
will presently explain, the $n=2$ case may also be established directly by 
exactly the same method as was used in the proof of Proposition 
\ref{prop:induct}. This shows, in particular, that the Bochner method 
provides a unified explanation for the validity of Theorem \ref{thm:main} 
in every dimension.

\begin{lem}
\label{lem:planar}
The conclusion of Theorem \ref{thm:superlich} holds in dimension $n=2$.
\end{lem}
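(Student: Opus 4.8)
The plan is to run the argument in the proof of Proposition~\ref{prop:induct} in dimension $n=2$: here the ``induction step'' of that proof degenerates into a reduction to the $(n-1)=1$ dimensional case, which is trivial. (One could instead simply observe that Lemma~\ref{lem:planar} is the $n=2$ case of Lemma~\ref{lem:km}, proved in \cite{BLYZ12}; but the point of the direct argument is the uniform treatment of all dimensions.)

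The first step is to record the $n=2$ analogue of Lemma~\ref{lem:indlocal}: for every $K\in\mathcal{K}^2_s$ (automatically a zonoid, by Lemma~\ref{lem:planarzonoid}), every $u\in S^1$, and every difference of support functions $f=h_M-h_{M'}$ with $M,M'\in\mathcal{K}^2_s$, one has
\begin{equation*}
	\frac{\V([-u,u],f)^2}{\V([-u,u],K)}
	= \frac12\int\frac{f^2}{h_K}\,dS_{[-u,u]},
\end{equation*}
in fact with equality. Writing $u^\dagger$ for a unit normal of $u$, Lemma~\ref{lem:proj} gives $\V([-u,u],g)=h_g(u^\dagger)+h_g(-u^\dagger)$ (extended by linearity to $f$) and $S_{[-u,u]}=2\delta_{u^\dagger}+2\delta_{-u^\dagger}$; the identity then follows at once because $f$ and $h_K$ are even. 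Conceptually this is exactly the proof of Lemma~\ref{lem:indlocal}, with ``Theorem~\ref{thm:main} in dimension $n-1$'' replaced by the elementary fact that \eqref{eq:locallogbm} holds with equality in dimension $1$, as one checks instantly by taking $K=[-a,a]$ and $L=[-b,b]$.

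With this in hand, the computation in the proof of Proposition~\ref{prop:induct} goes through verbatim with $n=2$: for $K\in\mathcal{K}^2_s$ of class $C^2_+$ with generating measure $\eta$ and $f\in C^2(S^1)_{\rm even}$, one inserts $h_K=\int h_{[-u,u]}\,\eta(du)$ into the expression for $\langle\mathscr{A}_Kf,\mathscr{A}_Kf\rangle$ to obtain a double integral over $S^1\times S^1$, applies Cauchy--Schwarz in the inner integral together with Lemma~\ref{lem:mvc2p} to bound it below by $\int \V([-u,u],f)^2/\V([-u,u],K)\,\eta(du)$, substitutes the displayed identity, and collapses the $\eta$-integral using $\int S_{[-u,u]}\,\eta(du)=S_K$ (multilinearity of mixed area measures together with $h_K=\int h_{[-u,u]}\,\eta(du)$) to arrive at $\frac12\int\frac{f^2}{h_K}\,dS_K=\langle f,f\rangle$. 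Since $\frac{n-2}{n-1}=0$ and $\frac1{n-1}=1$ for $n=2$, this is precisely \eqref{eq:logbochner}.

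There is no real obstacle here; this is the easy base case, and the whole proof is a specialization of arguments already carried out. The only points worth a word of care are: (i) the $(n-1)=1$ dimensional input is the degenerate statement ``\eqref{eq:locallogbm} in $\mathbb{R}^1$'', a trivial identity for all symmetric bodies (the ``projected body'' $\proj_{u^\perp}K$ is a segment, hence not of class $C^2_+$, but nothing is lost since $h_K>0$ and one never divides by zero); and (ii) it is the evenness hypothesis $f\in C^2(S^1)_{\rm even}$ that turns the dimension-$1$ step into an equality rather than a reversed (and false) Cauchy--Schwarz inequality.
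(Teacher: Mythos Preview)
Your proposal is correct and follows essentially the same approach as the paper: both run the Cauchy--Schwarz step of Proposition~\ref{prop:induct} for $n=2$ and then reduce the integrand to $\frac{f^2}{h_K}$ pointwise using evenness. The only cosmetic difference is that the paper invokes the explicit planar generating measure from Lemma~\ref{lem:planarzonoid} and the identity $\V([-u^\dagger,u^\dagger],f)=2f(u)$ directly, whereas you keep the generating measure abstract and pass through $S_{[-u,u]}=2\delta_{u^\dagger}+2\delta_{-u^\dagger}$ before collapsing the $\eta$-integral; the two computations are interchangeable.
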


\begin{proof}
Let $f\in C^2(S^1)_{\rm even}$, and let $K\in\mathcal{K}^2_s$ be a zonoid
of class $C^2_+$.
Applying the Cauchy-Schwarz inequality as in the proof of
Proposition \ref{prop:induct} yields
$$
	\langle \mathscr{A}_Kf,\mathscr{A}_Kf\rangle
	\ge
	\frac{1}{4}
	\int \frac{\V([-u^\dagger,u^\dagger],f)^2}{
	\V([-u^\dagger,u^\dagger],K)}\,S_K(du),
$$
where we used Lemma \ref{lem:planarzonoid} to compute the generating 
measure of a planar zonoid. However, as was observed in the proof of
Lemma \ref{lem:planarzonoid}, we have
$$
	\V([-u^\dagger,u^\dagger],K) =
	2h_K(u),\qquad
	\V([-u^\dagger,u^\dagger],f) = 2f(u)
$$
(the latter follows as $f=a(h_L-h_K)$ for some $L\in\mathcal{K}^2_s$ by 
Lemma \ref{lem:c2d2}). Thus
$$
	\langle \mathscr{A}_Kf,\mathscr{A}_Kf\rangle \ge
	\frac{1}{2} \int
	\frac{f^2}{h_K}\,dS_K =
	\langle f,f\rangle,
$$
concluding the proof.
\end{proof}

\section{Proof of Theorem \ref{thm:equality}}
\label{sec:eq}

As was already noted in Remark \ref{rem:protoequality}, we may expect in 
principle that one may deduce the equality cases of \eqref{eq:locallogbm} 
by a careful analysis of the Bochner method. The immediate problem with 
this approach is that the most basic object that appears in the Bochner 
method---the Hilbert operator $\mathscr{A}_K$---is not even well defined
unless $K$ is of class $C^2_+$, and no nontrivial equality cases can arise 
in that setting. We will nonetheless pursue this strategy in the present 
section to settle the equality cases. This is possible, in essence, 
because it suffices for the purposes of characterizing equality to replace
$\mathscr{A}_Kf$ by $-\frac{1}{n-1}f$ in the Bochner identity, in which 
case the relevant formulas make sense also in nonsmooth situations.

We begin by making the latter idea precise in section \ref{sec:eqcond}. We 
subsequently show in section \ref{sec:bochnereq} what information on the 
equality cases may be extracted from the Bochner method. The proof of 
Theorem \ref{thm:equality} will be completed in section \ref{sec:pfeq}.

\subsection{The equality condition}
\label{sec:eqcond}

Before we proceed to the analysis of the equality cases, we state
a slight generalization of \eqref{eq:locallogbm} that will be needed in the 
sequel.

\begin{lem}
\label{lem:hedgehog}
Let $K\in\mathcal{K}^n_s$ be a zonoid. Then
$$
	\frac{\V(f,K,\ldots,K)^2}{\Vol(K)} \ge 
	\frac{n-1}{n}\,
	\V(f,f,K,\ldots,K)
	+
	\frac{1}{n^2} \int \frac{f^2}{h_K}\,dS_{K,\ldots,K}
$$
holds whenever $f=h_L-h_M$ for some $L,M\in\mathcal{K}^n_s$.
\end{lem}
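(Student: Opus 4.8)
The plan is to deduce Lemma \ref{lem:hedgehog} from Theorem \ref{thm:main} (the inequality \eqref{eq:locallogbm} for genuine convex bodies $L\in\mathcal{K}^n_s$) by a polarization-type argument, exploiting that both sides of \eqref{eq:locallogbm} are quadratic in the body $L$. Write $f = h_L - h_M$. The idea is that the inequality in the statement, after expanding all the squares and the quadratic terms, is a quadratic form identity in $f$: precisely, if we define
\[
	Q(f) := \frac{\V(f,K,\ldots,K)^2}{\Vol(K)} - \frac{n-1}{n}\,\V(f,f,K,\ldots,K) - \frac{1}{n^2}\int\frac{f^2}{h_K}\,dS_{K,\ldots,K},
\]
then $Q$ is the quadratic form associated to a symmetric bilinear form $B(f,g)$ on the space of differences of support functions, and \eqref{eq:locallogbm} asserts $Q(h_L)\ge 0$ for all $L\in\mathcal{K}^n_s$ while we must prove $Q(h_L - h_M)\ge 0$ for all $L,M\in\mathcal{K}^n_s$.

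First I would reduce to the smooth case: by the approximation argument already used in the proof of Theorem \ref{thm:main} (approximating $K$ by zonoids of class $C^2_+$ via its generating measure, and $L,M$ by $C^2_+$ bodies), together with continuity of mixed volumes and mixed area measures, it suffices to prove the inequality when $K$ is a zonoid of class $C^2_+$ and $h_L, h_M \in C^2(S^{n-1})$. In that setting, $f = h_L - h_M \in C^2(S^{n-1})_{\rm even}$, and by the spectral dictionary of section \ref{sec:hilbert} we have
\[
	Q(f) = -\Vol(K)\Big[\langle f,\mathscr{A}_K f\rangle - \tfrac{\langle f,h_K\rangle^2}{\|h_K\|^2} + \tfrac{1}{n-1}\|f\|^2\Big]
\]
up to rearrangement — more precisely, the proof of Theorem \ref{thm:main} shows that $\langle g,\mathscr{A}_K g\rangle \le -\tfrac{1}{n-1}\|g\|^2$ for every even $g\in C^2(S^{n-1})$ with $\langle g, h_K\rangle = 0$, and this already applies verbatim to $g$ equal to the projection of our $f = h_L - h_M$ onto $\{h_K\}^\perp$, since $f$ is even. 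Translating back through the identities $\langle f,\mathscr{A}_K f\rangle = \V(f,f,K,\ldots,K)$, $\langle f,h_K\rangle = \V(f,K,\ldots,K)$, and $\|f\|^2 = \tfrac1n\int \tfrac{f^2}{h_K}\,dS_{K,\ldots,K}$, one obtains exactly the claimed inequality for $f$. The point is that the spectral inequality from Theorem \ref{thm:superlich} was never specific to $f$ being a single support function; it holds for all even $C^2$ functions, hence for all differences $h_L - h_M$.

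The main obstacle — really the only subtlety — is bookkeeping: one must verify that after expanding $\V(h_L - h_M,\ldots)^2$ and the quadratic terms, the resulting expression is genuinely the quadratic form evaluation $Q(f)$ at $f = h_L - h_M$, with no cross terms left over, so that the spectral inequality applies directly. This is the same expansion-of-squares computation performed in the proof of Lemma \ref{lem:indlocal} (where the inequality with a segment factor $[-u,u]$ was shown equivalent to the version with $h_L$ in place of $f$), and it goes through here without change because mixed volumes, mixed area measures, and the integral $\int \tfrac{(\cdot)^2}{h_K}\,dS_{K,\ldots,K}$ are all multilinear, respectively quadratic, in their arguments. I would therefore present the proof as: reduce to $C^2_+$ data by approximation; observe $f := h_L - h_M$ is an even $C^2$ function; apply the spectral inequality $\langle g,\mathscr{A}_K g\rangle \le -\tfrac{1}{n-1}\|g\|^2$ established in the proof of Theorem \ref{thm:main} to $g = f - \tfrac{\langle f,h_K\rangle}{\|h_K\|^2}h_K$; and translate the result back into mixed-volume language using the three identities above, exactly as in the final paragraph of the proof of Theorem \ref{thm:main}.
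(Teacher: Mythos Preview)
Your proposal is correct and follows essentially the same route as the paper. The paper's one-line proof (``as in the proof of Lemma~\ref{lem:indlocal}'') would, after approximation to $C^2_+$ data, invoke Lemma~\ref{lem:c2d2}(b) to write $f=a(h_{L'}-h_K)$ for some $L'\in\mathcal{K}^n_s$ of class $C^2_+$ and then apply Theorem~\ref{thm:main} to $L'$ after expanding the squares; you instead invoke the spectral inequality $\langle g,\mathscr{A}_Kg\rangle\le-\tfrac{1}{n-1}\|g\|^2$ for even $g\perp h_K$ directly from the proof of Theorem~\ref{thm:main}. These are equivalent reformulations of the same step, and you correctly identify the connection to Lemma~\ref{lem:indlocal}.
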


\begin{proof}
This follows from Theorem~\ref{thm:main} as in the proof of Lemma 
\ref{lem:indlocal}.
\end{proof}

We can now obtain a basic reformulation of the equality condition in 
\eqref{eq:locallogbm}. The method is due to Alexandrov \cite[pp.\ 
80--81]{Ale96}.

\begin{lem}
\label{lem:alex}
For any $L\in\mathcal{K}_s^n$ and any zonoid $K\in\mathcal{K}^n_s$, the
following are equivalent:
\begin{enumerate}[1.]
\item Equality holds in \eqref{eq:locallogbm}, that is,
$$
	\frac{\V(L,K,\ldots,K)^2}{\Vol(K)} =
	\frac{n-1}{n}\,
	\V(L,L,K,\ldots,K)
	+
	\frac{1}{n^2} \int \frac{h_L^2}{h_K}\,dS_{K,\ldots,K}.
$$
\item There exists $a>0$ so that
$f=h_L-ah_K$ satisfies 
$$
	h_K\,dS_{f,K,\ldots,K} = 
	-\frac{1}{n-1}\,f\,dS_{K,\ldots,K}.
$$
\end{enumerate}
\end{lem}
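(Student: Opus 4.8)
The plan is to follow Alexandrov's variational approach, treating the inequality of Lemma~\ref{lem:hedgehog} as a quadratic form in $f$ and exploiting the fact that the extremal body $L$ must be a critical point. Concretely, consider the quadratic functional
$$
	Q(f) := \frac{\V(f,K,\ldots,K)^2}{\Vol(K)} -
	\frac{n-1}{n}\,\V(f,f,K,\ldots,K) -
	\frac{1}{n^2}\int\frac{f^2}{h_K}\,dS_{K,\ldots,K}
$$
defined on the cone of differences of support functions of bodies in $\mathcal{K}^n_s$; by Lemma~\ref{lem:hedgehog}, $Q(f)\ge 0$ for all such $f$, and statement~1 says $Q(h_L)=0$, i.e.\ $h_L$ is a minimizer. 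The key observation is that $Q(h_K)=0$ as well: indeed $\V(K,\ldots,K)=\Vol(K)$ and $\int h_K\,dS_{K,\ldots,K}=n\Vol(K)$, so all three terms combine to $\Vol(K)-\frac{n-1}{n}\Vol(K)-\frac{1}{n}\Vol(K)=0$. Thus both $h_L$ and $h_K$ are minimizers of the nonnegative functional $Q$.

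First I would perturb: for small $\varepsilon$ and any admissible $g=h_{L'}-h_{M'}$, the function $h_L+\varepsilon g$ is still a difference of support functions of symmetric bodies (after absorbing signs), so $Q(h_L+\varepsilon g)\ge 0 = Q(h_L)$ forces the first-order term to vanish,
$$
	\frac{d}{d\varepsilon}\Big|_{\varepsilon=0} Q(h_L+\varepsilon g) = 0
	\quad\text{for all admissible }g.
$$
Expanding, and writing $c := \V(L,K,\ldots,K)/\Vol(K)$, this derivative is
$$
	\frac{2}{\Vol(K)}\,\V(L,K,\ldots,K)\,\V(g,K,\ldots,K)
	- \frac{2(n-1)}{n}\,\V(g,L,K,\ldots,K)
	- \frac{2}{n^2}\int\frac{h_L g}{h_K}\,dS_{K,\ldots,K}.
$$
Rewriting $\V(g,K,\ldots,K)=\frac1n\int g\,dS_{K,\ldots,K}$ and $\V(g,L,K,\ldots,K)=\frac1n\int g\,dS_{L,K,\ldots,K}$ via \eqref{eq:mixvolarea}, and collecting the integrand against the (signed) measure $dS_{f,K,\ldots,K}=dS_{L,K,\ldots,K}-a\,dS_{K,\ldots,K}$ where we will choose $a=c\cdot\frac{n}{n-1}$ (or the appropriate normalization making the $S_{K,\ldots,K}$-terms cancel), the vanishing of the derivative becomes
$$
	\int g\,\Big( h_K\,dS_{f,K,\ldots,K} + \tfrac{1}{n-1}\,f\,dS_{K,\ldots,K}\Big)\frac{1}{h_K} = 0
$$
for all admissible $g$. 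Since differences of support functions of symmetric $C^2_+$ bodies are dense in $C(S^{n-1})_{\mathrm{even}}$ and both measures involved are even, the bracketed signed measure must vanish, which is exactly statement~2; here one picks $a>0$ by noting $c=\V(L,K,\ldots,K)/\Vol(K)>0$. Conversely, if statement~2 holds then one computes directly that $\V(f,f,K,\ldots,K) = \langle f\,dS_{K,\ldots,K}, f/h_K\rangle$ collapses so that $Q(h_L)=0$ by substituting the relation back; this direction is a short algebraic verification.

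The main obstacle I anticipate is \emph{admissibility of perturbations}: the functional $Q$ is only known to be nonnegative on the cone of differences of support functions of bodies in $\mathcal{K}^n_s$, not on all of $C^2(S^{n-1})_{\mathrm{even}}$, so $h_L+\varepsilon g$ must be shown to lie in that cone for a rich enough family of $g$ and small $\varepsilon$. This is handled exactly as in Lemma~\ref{lem:c2d2}(b): fix any fixed symmetric $C^2_+$ body $M_0$; for $g$ of the form $a(h_{L'}-h_{M_0})$ one has $h_L+\varepsilon g = h_L - \varepsilon a h_{M_0} + \varepsilon a h_{L'}$, and for $g$ of the form $h_{M_0}-h_{L'}$ with $L'$ of class $C^2_+$ and $D^2 h_{L'}$ suitably bounded, $h_L + \varepsilon g$ remains a difference of support functions; since such $g$ span a dense subspace of $C(S^{n-1})_{\mathrm{even}}$ by Lemma~\ref{lem:c2d2}, the density argument in the last step goes through. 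The one delicate point — that $L$ need not be smooth, so $S_{L,K,\ldots,K}$ is only a measure — causes no trouble because every identity above is phrased integrally against $dS_{\bullet,K,\ldots,K}$, and the final relation in statement~2 is itself an identity of measures.
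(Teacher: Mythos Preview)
Your approach is essentially the paper's: define the quadratic $Q$, use Lemma~\ref{lem:hedgehog} to get $Q\ge 0$, observe $Q(h_L)=0$ is condition~1, and differentiate $t\mapsto Q(h_L+tg)$ at $t=0$ to obtain the measure identity of condition~2; the converse $2\Rightarrow 1$ is the direct computation you indicate. Two small corrections: the correct constant is $a=c=\V(L,K,\ldots,K)/\Vol(K)$, not $c\cdot\tfrac{n}{n-1}$ (your own derivative, rewritten as $\int \tfrac{g}{h_K}\big(h_K\,dS_{L,K,\ldots,K}+\tfrac{1}{n-1}h_L\,dS_{K,\ldots,K}-\tfrac{nc}{n-1}h_K\,dS_{K,\ldots,K}\big)=0$, forces $a\cdot\tfrac{n}{n-1}=\tfrac{nc}{n-1}$); and the observation $Q(h_K)=0$, while true, plays no role. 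Your admissibility discussion is more elaborate than needed: since any $g\in C^2(S^{n-1})_{\rm even}$ is $a(h_{L'}-h_{M'})$ for symmetric $C^2_+$ bodies $L',M'$ by Lemma~\ref{lem:c2d2}(b), one has $h_L+tg=h_{L+taL'}-h_{taM'}$ for $t>0$ (and similarly for $t<0$), which lies in the domain of Lemma~\ref{lem:hedgehog} directly.
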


\begin{proof}
We first prove that $2\Rightarrow 1$.
Integrating condition $2$ yields
$\V(f,K,\ldots,K)=0$, while multiplying condition $2$ by
$\frac{f}{h_K}$ and integrating yields
$$
	\frac{n-1}{n}\,
	\V(f,f,K,\ldots,K) = -\frac{1}{n^2}
	\int \frac{f^2}{h_K}\,dS_{K,\ldots,K}.
$$
We therefore obtain
$$
	\frac{\V(f,K,\ldots,K)^2}{\Vol(K)} =
	\frac{n-1}{n}\,
	\V(f,f,K,\ldots,K)
	+
	\frac{1}{n^2} \int \frac{f^2}{h_K}\,dS_{K,\ldots,K},
$$
and condition $1$ follows using $f=h_L-ah_K$ and expanding the squares.

We now prove the converse implication $1\Rightarrow 2$. Let $g\in 
C^2(S^{n-1})_{\rm even}$ and define
$$
	\beta(t) :=
	\frac{\V(g_t,K,\ldots,K)^2}{\Vol(K)} -
	\frac{n-1}{n}\,
	\V(g_t,g_t,K,\ldots,K)
	-
	\frac{1}{n^2} \int \frac{g_t^2}{h_K}\,dS_{K,\ldots,K}
$$
where $g_t:=h_L+tg$.
Condition $1$ implies $\beta(0)=0$, while Lemma \ref{lem:hedgehog} implies
$\beta(t)\ge 0$ for all $t$. Thus $\beta$ is minimized at 
zero, so that $\beta'(0)=0$ yields
$$
	\int g\,dS_{f,K,\ldots,K} = 
	-\frac{1}{n-1} \int g \,\frac{f}{h_K}\,dS_{K,\ldots,K}
$$
with $f = h_L - \frac{\V(L,K,\ldots,K)}{\Vol(K)}h_K$.
As $g\in
C^2(S^{n-1})_{\rm even}$ is arbitrary and as
$dS_{f,K,\ldots,K}$ and $\frac{f}{h_K}\,dS_{K,\ldots,K}$ are even measures, 
condition $2$ follows.
\end{proof}

It follows from the definition of the Hilbert operator $\mathscr{A}_K$ that 
when $K,L$ are of class $C^2_+$, Lemma \ref{lem:alex} states precisely that 
equality holds in \eqref{eq:locallogbm} if and only if 
$\mathscr{A}_Kf=-\frac{1}{n-1}f$ for $f=h_L-ah_K$. The point of Lemma 
\ref{lem:alex} is that the same characterization can be formulated for 
nonsmooth bodies in the sense of measures. The latter will suffice to apply 
the Bochner method to study the equality cases.

\subsection{The Bochner method revisited}
\label{sec:bochnereq}

Using Lemma \ref{lem:alex}, we can now essentially repeat the proof of 
Proposition \ref{prop:induct} in the present setting to extract a necessary 
condition for equality in \eqref{eq:locallogbm} from the Bochner method. 

\begin{lem}
\label{lem:eqbochner}
Let $K\in\mathcal{K}^n_s$ be a zonoid with generating measure $\eta$, and 
let $L\in\mathcal{K}^n_s$ be such 
that equality holds in \eqref{eq:locallogbm}. Then for every
$u\in\supp\eta$, there exists $c(u)\ge 0$ such that
$h_L(x)=c(u)h_K(x)$ for all $x\in\supp S_{K,\ldots,K}$ with
$|\langle u,x\rangle|>0$.
\end{lem}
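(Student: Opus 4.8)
The plan is to mimic the proof of Proposition \ref{prop:induct}, but starting from the equality characterization of Lemma \ref{lem:alex} rather than from the operator $\mathscr{A}_K$ (which does not even make sense when $K$ is a general zonoid). First I would reduce to the case where $K$ and $L$ are of class $C^2_+$ by a two-step argument: it suffices to treat the smooth case and then pass to the limit, but the subtlety is that equality is not an open condition, so one cannot simply approximate. Instead I would use Lemma \ref{lem:alex}: equality in \eqref{eq:locallogbm} is equivalent to the measure identity $h_K\,dS_{f,K,\ldots,K}=-\tfrac{1}{n-1}f\,dS_{K,\ldots,K}$ for $f=h_L-ah_K$, and this identity is stable under the approximations of $K$ and $L$ by $C^2_+$ bodies used in the proof of Theorem \ref{thm:main}. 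Actually, I expect it is cleaner to work directly with the nonsmooth identity and a fixed $u\in\supp\eta$, approximating only in auxiliary steps; the precise bookkeeping of which bodies are smoothed is the first thing to get right.

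Second, and this is the heart of the matter, I would exploit the Bochner identity in the form given in Remark \ref{rem:protoequality}. When $K$ is of class $C^2_+$ and equality holds, $f=h_L-ah_K$ is an even eigenfunction with $\mathscr{A}_Kf=-\tfrac{1}{n-1}f$, so the left-hand side of the displayed identity in Remark \ref{rem:protoequality} vanishes, forcing both nonnegative terms on the right to vanish. The vanishing of the first term,
$$
\int\!\!\int \frac{h_{[-u,u]}}{h_K}\bigg(\mathscr{A}_Kf-\frac{\V([-u,u],f,K,\ldots,K)}{\V([-u,u],K,\ldots,K)}h_K\bigg)^2 d\mu_K\,\eta(du)=0,
$$
says that for $\eta$-a.e.\ $u$, and for $\mu_K$-a.e.\ $x$ with $h_{[-u,u]}(x)=|\langle u,x\rangle|>0$, one has $\mathscr{A}_Kf(x)=c(u)h_K(x)$ with $c(u)$ the displayed ratio. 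Using $\mathscr{A}_Kf=-\tfrac1{n-1}f$ this becomes $f(x)=-(n-1)c(u)h_K(x)$, i.e.\ $h_L(x)=(a-(n-1)c(u))h_K(x)$; absorbing constants gives $h_L(x)=c'(u)h_K(x)$ on the relevant set, with $c'(u)\ge0$ because $h_L,h_K>0$. This is exactly the claimed conclusion in the smooth case, with $\supp S_{K,\ldots,K}=S^{n-1}$ and "$\mu_K$-a.e." upgraded to "everywhere" by continuity of $h_L,h_K$ and positivity of the density of $\mu_K$ on $\{|\langle u,x\rangle|>0\}$.

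Third, I would remove the smoothness assumption. Here the approach is to establish the nonsmooth analogue of the vanishing-first-term conclusion directly from Lemma \ref{lem:alex} and Lemma \ref{lem:indlocal}, rerunning the chain of inequalities in the proof of Proposition \ref{prop:induct} at the level of measures: the Cauchy–Schwarz step becomes an integrated Cauchy–Schwarz whose equality case pins down $dS_{f,K,\ldots,K}$ against $\tfrac{f}{h_K}dS_{K,\ldots,K}$ restricted to each projection, and the equality hypothesis closes the loop. The main obstacle, which I would expect to occupy most of the write-up, is precisely this: justifying that the two inequalities chained in Proposition \ref{prop:induct} are in fact equalities for $\eta$-a.e.\ $u$, and then extracting from equality in the (projected, integrated) Cauchy–Schwarz inequality the pointwise proportionality $h_L(x)=c(u)h_K(x)$ on $\{x\in\supp S_{K,\ldots,K}:|\langle u,x\rangle|>0\}$ — all while $K$ is merely a zonoid and the natural quantities are measures rather than functions. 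Once the proportionality is known $\eta$-a.e., a final continuity/support argument promotes "a.e.\ $u$" to "every $u\in\supp\eta$", using that the set of $u$ for which it holds is closed modulo null sets and that $h_L-c(u)h_K$ depends continuously on $u$ on the fixed closed set $\supp S_{K,\ldots,K}\cap\{|\langle u,x\rangle|>0\}$.
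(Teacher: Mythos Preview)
Your step 3 is the right idea and is essentially what the paper does, but steps 1--2 are an unnecessary detour: the paper works \emph{entirely} at the nonsmooth level, with no approximation and no appeal to $\mathscr{A}_K$. What you are missing is the specific way to rerun the Bochner chain without the Hilbert operator. The trick is to start not from $\langle\mathscr{A}_Kf,\mathscr{A}_Kf\rangle$ but from $\int \tfrac{f^2}{h_K}\,dS_{K,\ldots,K}$, which equals $(n-1)^2\langle\mathscr{A}_Kf,\mathscr{A}_Kf\rangle$ under the eigenvalue condition and makes sense for any $K$. Expanding $h_K=\int h_{[-u,u]}\,\eta(du)$ and applying Cauchy--Schwarz to $f/h_K$ against the measure $h_{[-u,u]}\,dS_{K,\ldots,K}$ gives
\[
\int\frac{f^2}{h_K}\,dS_{K,\ldots,K}\ge\int\frac{\big(\int\tfrac{f}{h_K}h_{[-u,u]}\,dS_{K,\ldots,K}\big)^2}{\int h_{[-u,u]}\,dS_{K,\ldots,K}}\,\eta(du).
\]
Now the measure identity of Lemma~\ref{lem:alex} converts $\int\tfrac{f}{h_K}h_{[-u,u]}\,dS_{K,\ldots,K}$ into $-n(n-1)\,\V([-u,u],f,K,\ldots,K)$, after which Lemma~\ref{lem:indlocal} and one more use of Lemma~\ref{lem:alex} close the chain back to $\int\tfrac{f^2}{h_K}\,dS_{K,\ldots,K}$. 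Equality throughout forces equality in each Cauchy--Schwarz, and the equality case there says precisely that $f/h_K$ is constant on $\supp(h_{[-u,u]}\,dS_{K,\ldots,K})=\{x\in\supp S_{K,\ldots,K}:|\langle u,x\rangle|>0\}$, which is the conclusion. So the ``main obstacle'' you anticipate in step 3 dissolves once you see that the Cauchy--Schwarz is applied to the scalar function $f/h_K$ rather than to measures, and that the passage from $\eta$-a.e.\ $u$ to all $u\in\supp\eta$ is immediate by continuity of both sides in $u$.

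Your step 1 as written has a genuine problem: the measure identity of Lemma~\ref{lem:alex} holds for the limiting $K$, not for its smooth approximants $K_n$, so there is no reason the approximants are equality cases. You recognize this and retreat to ``work directly with the nonsmooth identity,'' which is correct---but then step 2 serves no purpose, since the smooth case is never used and never fed into an approximation. Drop steps 1--2 entirely and execute step 3 with the computation above.
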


\begin{proof}
Let $a>0$ be such that $f=h_L-ah_K$ satisfies the second condition of 
Lemma \ref{lem:alex}, that is, $f\,dS_{K,\ldots,K} = 
-(n-1)\,h_K\,dS_{f,K,\ldots,K}$. Then we have
\begin{align*}
	\int \frac{f^2}{h_K}\,dS_{K,\ldots,K} &=
	\int
	\int
	\frac{f^2}{h_K^2}\, 
	h_{[-u,u]}\,dS_{K,\ldots,K}\,\eta(du) 
	\\ 
	&\ge
	\int
	\frac{
	\big(\int \frac{f}{h_K}\,h_{[-u,u]}\,dS_{K,\ldots,K}\big)^2}{
	\int h_{[-u,u]}\,dS_{K,\ldots,K}}
	\,\eta(du)
	\\
	&=
	n(n-1)^2
	\int
	\frac{\V([-u,u],f,K,\ldots,K)^2}{
	\V([-u,u],K,\ldots,K)}
	\,\eta(du) \\
	&\ge
	n(n-1)(n-2) \V(f,f,K,\ldots,K) 
	+
	(n-1) \int \frac{f^2}{h_K}\,dS_{K,\ldots,K}
	\\
	&=
	\int \frac{f^2}{h_K}\,dS_{K,\ldots,K}.
\end{align*}
Here we used $h_K=\int h_{[-u,u]}\,\eta(du)$ in the first line; the 
Cauchy-Schwarz inequality in the second line; the condition of
Lemma \ref{lem:alex} in the third line; Lemma \ref{lem:indlocal} in the
fourth line (or by the proof of Lemma \ref{lem:planar} for $n=2$);
and the fifth line follows as
$$
	\V(f,f,K,\ldots,K) = 
	\frac{1}{n}\int f\,dS_{f,K,\ldots,K} =
	-\frac{1}{n(n-1)}\int \frac{f^2}{h_K}\,dS_{K,\ldots,K}
$$
by the condition of Lemma \ref{lem:alex}.

Consequently, both inequalities used above must hold with equality.
In particular, we have equality in the Cauchy-Schwarz inequality
$$
	\int
	\frac{f^2}{h_K^2}\, h_{[-u,u]}\,dS_{K,\ldots,K}
	=
	\frac{
	\big(\int \frac{f}{h_K}\,h_{[-u,u]}\,dS_{K,\ldots,K}\big)^2}{
	\int h_{[-u,u]}\,dS_{K,\ldots,K}}
$$
for every $u\in\supp\eta$. By the equality condition of the Cauchy-Schwarz 
inequality, this implies that for every $u\in\supp\eta$, there is a 
constant $c'(u)$ so that $f(x)=c'(u)h_K(x)$ for every $x\in\supp 
S_{K,\ldots,K}$ with $h_{[-u,u]}(x)=|\langle u,x\rangle|>0$.
But as $f=h_L-ah_K$, the conclusion follows with $c(u)=a+c'(u)$. (Note 
that it must be the case that $c(u)\ge 0$ as $h_K,h_L$ are positive 
functions.)
\end{proof}

\begin{rem}
The proof of Lemma \ref{lem:eqbochner} actually provides more information
than is expressed in its statement: not only do we get equality
in Cauchy-Schwarz, but we also get equality in the application
of Lemma \ref{lem:indlocal}. In particular, this implies that if equality
holds in \eqref{eq:locallogbm} for given $K,L$ in dimension $n$,
then the projections $\proj_{u^\perp}K,\proj_{u^\perp}L$ must also yield
equality in \eqref{eq:locallogbm} in dimension $n-1$ for every
$u\in\supp\eta$.
It is a curious feature of the present problem that the latter information
will not be needed to characterize the equality cases: the equality
condition in Cauchy-Schwarz will already suffice to fully characterize the 
equality cases of \eqref{eq:locallogbm}.
\end{rem}

\subsection{Characterization of equality}
\label{sec:pfeq}

We are now ready to proceed to the proof of Theorem \ref{thm:equality}. 
The main difficulty is to show that the stated conditions are necessary 
for equality, which will be deduced from Lemma \ref{lem:eqbochner}.

In the proof of the following result, we will encounter graphs that may 
have an uncountable number of vertices and edges. The standard properties 
of graphs that will be used in the proof---chiefly that a graph can be 
partitioned into its connected components---are valid at this level of 
generality; cf.\ \cite[Chapter 2]{Ore62}.

\begin{prop}
\label{prop:eqnecess}
Let $K\in\mathcal{K}^n_s$ be a zonoid, and let $L\in\mathcal{K}^n_s$ be 
such that equality holds in \eqref{eq:locallogbm}. Then there exist $1\le 
m\le n$, $a_1,\ldots,a_m\ge 0$, and zonoids $C_1,\ldots,C_m$ with 
$\dim(C_1)+\cdots+\dim(C_m)=n$ so that $K=C_1+\cdots+C_m$ and
$$
	h_L(x)=h_{a_1C_1+\cdots+a_mC_m}(x)
	\mbox{ for all }
	x\in\supp S_{K,\ldots,K}.
$$
\end{prop}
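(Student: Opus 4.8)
The goal is to upgrade the pointwise information of Lemma \ref{lem:eqbochner} into the global structural statement. The plan is to build a graph $G$ on the generating measure's support $\supp\eta$, encode the constraints from Lemma \ref{lem:eqbochner} as edges, partition into connected components, and read off the Minkowski decomposition of $K$ from this partition.

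\textbf{Step 1: Set-up and the graph.} Fix $a>0$ with $f=h_L-ah_K$ as in Lemma \ref{lem:alex}. By Lemma \ref{lem:eqbochner}, for each $u\in\supp\eta$ there is $c(u)\ge 0$ such that $h_L(x)=c(u)h_K(x)$ for all $x\in\supp S_{K,\ldots,K}$ with $\langle u,x\rangle\neq 0$. The key observation is that this forces a consistency relation: if $u,v\in\supp\eta$ and there exists a single $x\in\supp S_{K,\ldots,K}$ with $\langle u,x\rangle\neq 0$ and $\langle v,x\rangle\neq 0$, then $c(u)h_K(x)=h_L(x)=c(v)h_K(x)$, so $c(u)=c(v)$ (using $h_K>0$ on $S^{n-1}$, or more carefully on $\supp S_{K,\ldots,K}$, which is nonempty since $K$ has nonempty interior). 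I would define a graph $G$ with vertex set $\supp\eta$ and an edge between $u$ and $v$ whenever such a common $x$ exists. On each connected component the function $c$ is constant. Here one must be slightly careful: the relation "$\exists x$ with $\langle u,x\rangle\neq 0\neq\langle v,x\rangle$" is not transitive, but constancy of $c$ propagates along paths, and a graph can be partitioned into connected components even when vertex/edge sets are uncountable (cite \cite[Chapter 2]{Ore62}).

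\textbf{Step 2: From components to a decomposition of $K$.} Let $\{I_j\}_{j\in J}$ be the connected components of $G$, with $c\equiv a_j'$ on $I_j$; set $a_j:=a+a_j'\ge 0$. Define $C_j$ to be the zonoid with generating measure $\eta\restriction_{I_j}$, so that $h_{C_j}(x)=\int_{I_j}|\langle u,x\rangle|\,\eta(du)$ and $K=\sum_j C_j$ by additivity of support functions (a priori an infinite Minkowski sum, to be trimmed below). The crucial claim is that for $x\in\supp S_{K,\ldots,K}$, the set $I(x):=\{u\in\supp\eta:\langle u,x\rangle\neq 0\}$ meets at most one component $I_j$: indeed any two elements of $I(x)$ are joined by an edge (witnessed by $x$ itself), hence lie in the same component. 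Consequently, for such $x$, all $C_j$ with $j\neq j(x)$ have $h_{C_j}(x)=\int_{I_j}|\langle u,x\rangle|\,\eta(du)=0$, and $h_K(x)=h_{C_{j(x)}}(x)$. Then
$$
h_L(x)=c(u)\,h_K(x)=a_{j(x)}\,h_{C_{j(x)}}(x)=\sum_j a_j h_{C_j}(x)=h_{\sum_j a_j C_j}(x)
$$
for every $x\in\supp S_{K,\ldots,K}$, which is the desired identity. (If $I(x)=\emptyset$ then $x$ is orthogonal to $\supp\eta$, forcing $h_K(x)=0$, impossible; so $j(x)$ is well-defined.)

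\textbf{Step 3: Dimension count and finiteness.} It remains to show one can take finitely many components with $\sum_j\dim C_j=n$. Here I would invoke a dimension/linear-independence argument on the directions: $\dim C_j=\dim\sspan(I_j)$. Since the $C_j$ are summands of $K$ which has nonempty interior, $\sum_j\sspan(I_j)=\mathbb{R}^n$; I expect one can argue that distinct components span linearly independent subspaces—if $\sspan(I_j)\cap\sspan(I_k)\neq\{0\}$ for $j\neq k$, one should be able to produce $x\in\supp S_{K,\ldots,K}$ with $\langle u,x\rangle\neq 0$ for some $u\in I_j$ and some $v\in I_k$ (using that $\supp S_{K,\ldots,K}$ is large enough—by the support theorem for mixed area measures of $K,\ldots,K$ it is "full" relative to the directions present), giving a contradictory edge. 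This independence forces at most $n$ nontrivial components and $\sum\dim C_j\le n$; absorbing any degenerate (dimension-0) contributions and possibly padding with a segment in a missing direction (scaled suitably, or merged) gives exactly $\sum\dim C_j=n$ with $1\le m\le n$.

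\textbf{Main obstacle.} The routine part is the Cauchy-Schwarz-to-constancy propagation (Step 1–2). The delicate part is Step 3: controlling $\supp S_{K,\ldots,K}$ precisely enough to conclude that distinct graph components span independent subspaces, and handling low-dimensional components so that the dimensions sum exactly to $n$ rather than merely $\le n$. This requires knowing that $\supp S_{K,\ldots,K}$ is not "too small"—essentially that a normal direction $x$ survives in $\supp S_{K,\ldots,K}$ as soon as it is non-degenerate for enough of the generating segments—which should follow from standard facts about supports of mixed area measures of a body with itself, but is the step needing genuine care.
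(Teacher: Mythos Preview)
Your overall architecture is exactly the paper's: build the graph on $\supp\eta$, use Lemma~\ref{lem:eqbochner} to make $c$ constant on components, and read off $C_1,\ldots,C_m$ from the components. Steps~1--2 are correct (modulo a slip: since Lemma~\ref{lem:eqbochner} already gives $h_L=c(u)h_K$, you want $a_j:=a_j'$, not $a+a_j'$).

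The one real gap is Step~3, which you correctly flagged. The fix is shorter than you anticipate and does not require any deep support theorem. The paper isolates two claims. Claim~1 you already have: for $x\in\supp S_{K,\ldots,K}$, the set $\{u\in\supp\eta:\langle u,x\rangle\ne 0\}$ lies in a single component. Claim~2 is the missing ingredient: \emph{for every $u\in S^{n-1}$ there exists $x\in\supp S_{K,\ldots,K}$ with $|\langle u,x\rangle|>0$}. This follows in one line from Lemma~\ref{lem:proj}: otherwise $\int|\langle u,x\rangle|\,dS_{K,\ldots,K}(x)=0$, i.e.\ $\Vol(\proj_{u^\perp}K)=0$, contradicting $\intr K\ne\emptyset$. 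With Claim~2 in hand, linear independence of the subspaces $\mathrm{L}_i:=\sspan V_i$ follows: if some $0\ne z\in\mathrm{L}_i\cap\sspan\bigl(\bigcup_{j\ne i}V_j\bigr)$, pick $x\in\supp S_{K,\ldots,K}$ with $\langle z,x\rangle\ne 0$ (Claim~2) and note that Claim~1 forces $x\perp V_j$ for all $j\ne i'$ for some $i'$, whence $x\perp z$ via one of the two representations of $z$ --- a contradiction. Two remarks on your sketch of this step: checking only pairwise intersections $\mathrm{L}_j\cap\mathrm{L}_k=\{0\}$ is not enough for linear independence of more than two subspaces, so you must argue with $\mathrm{L}_i$ against the span of all the others as above; and no ``padding'' is needed --- once the $\mathrm{L}_i$ are independent and $K=\sum C_i$ has nonempty interior, $\sum\dim C_i=\sum\dim\mathrm{L}_i=n$ automatically.
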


\begin{proof}
We define a graph $(V,E)$ as follows:
\begin{enumerate}[$\bullet$]
\itemsep\abovedisplayskip
\item The vertices are $V=\supp\eta$, where $\eta$ denotes the generating 
measure of $K$.
\item There is an edge $\{u,v\}\in E$ between $u,v\in V$ if and only
if there exists $x\in\supp S_{K,\ldots,K}$ such that
$|\langle u,x\rangle|>0$ and $|\langle v,x\rangle|>0$.
\end{enumerate}
Denote by $V=\bigsqcup_{i\in I}V_i$ the partition of $V$ into
its connected components $V_i$.

For any edge $\{u,v\}\in E$, Lemma \ref{lem:eqbochner} implies that
$$
	c(u)h_K(x)=h_L(x) = c(v)h_K(x)
$$
for some $x\in\supp S_{K,\ldots,K}$.
As $h_K(x)>0$, it follows that $c(u)=c(v)$. In particular, the value of 
$c(u)$ must be constant on each connected component. In the sequel, 
we will denote this value as $c(u)=a_i$ for $u\in V_i$.

Next, we make a key observation.

\begin{claim}
For every $x\in\supp S_{K,\ldots,K}$, there exists $i\in I$ so that
$x\perp V_j$ for all $j\ne i$.
\end{claim}

\begin{proof}
We can assume that $x\in\supp S_{K,\ldots,K}$ satisfies $|\langle 
u,x\rangle|>0$
for some $i\in I$, $u\in V_i$, as otherwise the conclusion is trivial. 
But then we must have
$|\langle v,x\rangle|=0$ for all $j\ne i$, $v\in V_j$, as distinct 
connected components have no edge between them.
\end{proof}

We also need the following.

\begin{claim}
For every $u\in S^{n-1}$, there exists $x\in\supp S_{K,\ldots,K}$ so that
$|\langle u,x\rangle|>0$.
\end{claim}

\begin{proof}
If the conclusion were false, there would exist some $u\in S^{n-1}$ such 
that $0=\int |\langle u,x\rangle|\,S_{K,\ldots,K}(dx) = 
2\,\Vol(\proj_{u^\perp}K)$ by Lemma \ref{lem:proj}. The latter is
impossible as $K\in\mathcal{K}^n_s$ is assumed to have nonempty interior.
\end{proof}

The above two claims imply that distinct $V_i$ must lie in
linearly independent subspaces $\mathrm{L}_i=\mathop{\mathrm{span}} V_i$.
Indeed, if this is not so, then there exists $z\in S^{n-1}$ so that
$$
	z = t_1u_1+\cdots+t_ku_k = s_1v_1+\cdots s_lv_l 
$$
for some $k,l\ge 1$, $i\in I$, $u_1,\ldots,u_k\in V_i$,
$v_1,\ldots,v_l\in \bigcup_{j\ne i}V_j$, 
$t_1,\ldots,t_k,s_1,\ldots,s_l\ne 0$.
By the second claim there exists $x\in \supp S_{K,\ldots,K}$ so that
$|\langle z,x\rangle|>0$. But by the first claim we must then have
$x\perp v_1,\ldots,u_l$, which entails a contradiction. It follows,
in particular, that there can be at most $n$ connected components,
so we can write $I=\{1,\ldots,m\}$ for some $1\le m\le n$.

We now define zonoids $C_1,\ldots,C_m$ as
$$
	h_{C_i} = \int_{\mathrm{L}_i} h_{[-u,u]}\,\eta(du).
$$
As $\mathrm{L}_1,\ldots,\mathrm{L}_m$ are linearly independent and 
$\supp\eta = V \subseteq S^{n-1}\cap(\mathrm{L}_1\cup\cdots\cup 
\mathrm{L}_m)$
$$
	h_{C_1}+\cdots+h_{C_m} =
	\int h_{[-u,u]}\,\eta(du) = h_K,
$$
that is, $K=C_1+\cdots+C_m$. Moreover, as 
$\mathrm{L}_1,\ldots,\mathrm{L}_m$ are linearly 
independent and $K$ has nonempty interior, we must have 
$\dim(C_1)+\cdots+\dim(C_m)=n$. 

Finally, let $x\in \supp S_{K,\ldots,K}$. By the first claim above, there
exists $1\le i\le m$ so that $h_{C_j}(x)=0$ for all $j\ne i$. As this 
implies that $h_{C_i}(x)=h_K(x)>0$, there must exist $u\in V_i$ so that
$|\langle u,x\rangle|>0$. Recalling that $c(u)=a_i$ for $u\in V_i$,
we obtain
$$
	h_L(x) = a_i h_K(x) = 
	a_i h_{C_i}(x) = 
	a_1 h_{C_1}(x)+\cdots+a_m h_{C_m}(x)
$$
by Lemma \ref{lem:eqbochner}. As this holds for any 
$x\in\supp S_{K,\ldots,K}$, the proof is 
complete.
\end{proof}

Before we complete the proof, we must verify the basic case of equality.

\begin{lem}
\label{lem:eqbasic}
Suppose that $K=C_1+\cdots+C_m$ for some convex bodies
$C_1,\ldots,C_m$ such that $\dim(C_1)+\cdots+\dim(C_m)=n$, and that
$L=a_1C_1+\cdots+a_mC_m$ for some $a_1,\ldots,a_m\ge 0$. Then
equality holds in \eqref{eq:locallogbm}.
\end{lem}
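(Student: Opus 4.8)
The plan is to verify Lemma \ref{lem:eqbasic} by a direct computation, exploiting multilinearity of mixed volumes together with the product structure of $K=C_1+\cdots+C_m$ and $L=a_1C_1+\cdots+a_mC_m$. The key algebraic fact is that since $\dim(C_1)+\cdots+\dim(C_m)=n$ and the $C_i$ live in linearly independent (complementary) subspaces $\mathrm{L}_i$, a mixed volume $\V(D_1,\ldots,D_n)$ with each $D_j$ chosen from $\{C_1,\ldots,C_m\}$ vanishes unless $C_i$ appears \emph{exactly} $\dim(C_i)$ times for every $i$; in that case it factors as a product $\prod_i \Vol_{\mathrm{L}_i}(C_i)$ up to a combinatorial constant. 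This is the ``block-diagonal'' behavior of mixed volumes, which follows from Lemma \ref{lem:proj} by iterating projections onto the $\mathrm{L}_i$, or from the standard formula for mixed volumes of convex bodies in complementary subspaces (see \cite[Sec.~5.3]{Sch14}).

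Concretely, I would first write $n_i:=\dim(C_i)$ and expand each of the three terms in \eqref{eq:locallogbm} using multilinearity. For the left-hand side, $\V(L,K,\ldots,K)=\sum_i a_i\,\V(C_i,K,\ldots,K)$, and $\V(C_i,K,\ldots,K)$ picks out only the term in which $K^{n-1}$ contributes $C_i^{n_i-1}\prod_{j\ne i}C_j^{n_j}$, giving a constant (depending on the $n_i$) times $\Vol(K)$; summing, the left-hand side becomes $\bigl(\sum_i \tfrac{n_i}{n} a_i\bigr)^2\Vol(K)$ after dividing by $\Vol(K)$. A parallel computation handles $\V(L,L,K,\ldots,K)=\sum_{i,j}a_ia_j\V(C_i,C_j,K,\ldots,K)$: off-diagonal terms $i\ne j$ yield a constant times $\Vol(K)$, while diagonal terms $i=j$ are governed by whether $C_i$ can appear $n_i+1$ times, which forces the combinatorial weight $\tfrac{n_i(n_i-1)}{n(n-1)}$ times $\Vol(K)$ for $n_i\ge 2$ and $0$ for $n_i=1$. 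For the integral term, I would use $dS_{K,\ldots,K}=\sum$ over how $K^{n-1}$ distributes among the blocks, but since $h_L(x)=\sum_i a_i h_{C_i}(x)$ and each $h_{C_i}$ is supported (as a function influencing the integrand) on directions in $\mathrm{L}_i$, and the area measure $S_{K,\ldots,K}$ restricted to $\supp$ is carried by directions lying in a single $\mathrm{L}_i$ (by the first claim in the proof of Proposition \ref{prop:eqnecess}), the cross terms drop out: $\tfrac{1}{n^2}\int \tfrac{h_L^2}{h_K}dS_{K,\ldots,K}=\sum_i \tfrac{a_i^2}{n^2}\int_{\mathrm{L}_i\text{-directions}}\tfrac{h_{C_i}^2}{h_K}dS_{K,\ldots,K}$, and each such piece evaluates to $\tfrac{n_i}{n^2}a_i^2\Vol(K)$ by the same projection/block-diagonal bookkeeping (it is the $C_i=L=K$ instance of the LHS computation within block $i$).

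Assembling, the inequality \eqref{eq:locallogbm} becomes the scalar identity
$$
\Bigl(\sum_i \tfrac{n_i}{n}a_i\Bigr)^2
= \frac{n-1}{n}\Bigl(\sum_{i\ne j}\tfrac{n_in_j}{n(n-1)}a_ia_j + \sum_i \tfrac{n_i(n_i-1)}{n(n-1)}a_i^2\Bigr)
+ \frac{1}{n^2}\sum_i n_i a_i^2,
$$
and the right-hand side simplifies: the first bracket is $\tfrac{1}{n^2}\bigl((\sum_i n_ia_i)^2-\sum_i n_ia_i^2\bigr)$, so multiplying by $\tfrac{n-1}{n}\cdot\tfrac{n}{n-1}=1$ leaves $\tfrac{1}{n^2}(\sum_i n_ia_i)^2 - \tfrac{1}{n^2}\sum_i n_ia_i^2 + \tfrac{1}{n^2}\sum_i n_ia_i^2 = \tfrac{1}{n^2}(\sum_i n_ia_i)^2$, which equals the left-hand side. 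Thus equality holds exactly. The main obstacle is getting the combinatorial constants right in the block-diagonal expansion of the mixed volumes and area measure; I would pin these down cleanly by reducing everything, via repeated application of Lemma \ref{lem:proj} (equivalently, the complementary-subspace mixed volume formula), to products of ordinary volumes in the subspaces $\mathrm{L}_i$, after which the identity above is pure arithmetic. One should also note at the outset that the $C_i$ here may be taken to be the zonoid summands from Proposition \ref{prop:eqnecess}, but the computation uses only $\dim(C_i)$ and the linear independence of the $\mathrm{L}_i$, so no zonoid hypothesis is actually needed for this lemma.
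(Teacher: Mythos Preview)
Your approach is correct and the algebra checks out: with $n_i=\dim C_i$ and the block-diagonal fact that $\V(C_{i_1},\ldots,C_{i_n})\ne 0$ only when each $C_j$ appears exactly $n_j$ times (this is \cite[Theorem~5.1.8]{Sch14}), one gets $\V(C_i,K,\ldots,K)=\tfrac{n_i}{n}\Vol(K)$, $\V(C_i,C_j,K,\ldots,K)=\tfrac{n_in_j}{n(n-1)}\Vol(K)$ for $i\ne j$ and $\tfrac{n_i(n_i-1)}{n(n-1)}\Vol(K)$ for $i=j$, and $\tfrac{1}{n^2}\int\tfrac{h_L^2}{h_K}\,dS_{K,\ldots,K}=\tfrac{1}{n^2}\sum_i n_ia_i^2\,\Vol(K)$; the resulting scalar identity is exactly $(\sum_i\tfrac{n_i}{n}a_i)^2$ on both sides. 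One correction: do not cite the first claim in the proof of Proposition~\ref{prop:eqnecess} for the fact that $\supp S_{K,\ldots,K}$ splits into ``blocks'' on which only one $h_{C_i}$ is nonzero. That claim is established there under the standing hypothesis that $K$ is a zonoid and $L$ attains equality, via Lemma~\ref{lem:eqbochner}; invoking it here would be circular. The fact you need follows directly from \cite[Theorem~5.1.8]{Sch14}: if $\int h_{C_i}\,dS_{C_{i_1},\ldots,C_{i_{n-1}}}>0$ then $\int h_{C_j}\,dS_{C_{i_1},\ldots,C_{i_{n-1}}}=0$ for $j\ne i$, so summing over the expansion of $S_{K,\ldots,K}$ gives the block structure.

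The paper's proof uses the same two ingredients (\cite[Theorem~5.1.8]{Sch14} and the block structure of $\supp S_{K,\ldots,K}$) but packages the mixed-volume bookkeeping differently. Rather than expanding all three terms of \eqref{eq:locallogbm} and matching combinatorial constants, it observes that the block-diagonal fact gives $\Vol(b_1C_1+\cdots+b_mC_m)=\Gamma\prod_i b_i^{n_i}$, and then computes
\[
-\frac{\Vol(K)}{n^2}\,\frac{d^2}{dt^2}\log\Vol(e^{ta_1}C_1+\cdots+e^{ta_m}C_m)\bigg|_{t=0},
\]
which vanishes since $\log$ of a monomial is linear in $t$. Differentiating the polynomial expansion of volume identifies this second derivative with the deficit in \eqref{eq:locallogbm}, once one checks (via the block structure) that $h_{a_1^2C_1+\cdots+a_m^2C_m}=h_L^2/h_K$ on $\supp S_{K,\ldots,K}$. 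This sidesteps the multinomial coefficients entirely. Your direct expansion is more elementary but requires tracking those constants carefully; the paper's logarithmic-derivative trick is shorter and makes the link to the ``local'' nature of \eqref{eq:locallogbm} (as a second variation) more transparent.
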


\begin{proof}
By \cite[Theorem 5.1.8]{Sch14}, the condition 
$\dim(C_1)+\cdots+\dim(C_m)=n$ implies that we have
$\V(C_{i_1},\ldots,C_{i_n})>0$ if and only if each index $1\le j\le m$
appears exactly $\dim(C_j)$ times among $(i_1,\ldots,i_n)$. Thus
for any $b_1,\ldots,b_m\ge 0$
$$
	\Vol(b_1C_1+\cdots+b_mC_m)=
	\sum_{i_1,\ldots,i_n=1}^m b_{i_1}\cdots b_{i_n}
	\V(C_{i_1},\ldots,C_{i_n})
	=
	\Gamma\, b_1^{\dim(C_1)}\cdots b_m^{\dim(C_m)}
$$
for some constant $\Gamma$ that depends only on $C_1,\ldots,C_m$.
Therefore
\begin{align*}
	0 &=
	-\frac{\Vol(K)}{n^2}
	\frac{d^2}{dt^2} \log \Vol(e^{t a_1} C_1 + \cdots + e^{t a_m} C_m)
	\bigg|_{t=0} \\
	&=
	\frac{\V(L,K,\ldots,K)^2}{\Vol(K)}
	-\frac{n-1}{n}\,\V(L,L,K,\ldots,K)
	-\frac{1}{n^2}\int h_{a_1^2  C_1 + \cdots + a_m^2 C_m}
	dS_{K,\ldots,K}.
\end{align*}
Now note that if $\int h_{C_i}\, dS_{C_{i_1},\ldots,C_{i_{n-1}}}>0$, then
using \cite[Theorem 5.1.8]{Sch14} as above shows that
$\int h_{C_j}\, dS_{C_{i_1},\ldots,C_{i_{n-1}}}=0$ for all $j\ne i$. 
In particular, as we have
$S_{K,\ldots,K}=\sum_{i_1,\ldots,i_{n-1}}S_{C_{i_1},\ldots,C_{i_{n-1}}}$,
this implies that for every $x\in\supp S_{K,\ldots,K}$, there exists an 
index $i$ so that $h_{C_j}(x)=0$ for all $j\ne i$. It follows readily that
$$
	h_{a_1^2C_1+\cdots+a_m^2C_m}(x)= \frac{h_L(x)^2}{h_K(x)}
	\quad\mbox{for all }x\in\supp S_{K,\ldots,K},
$$
and the proof is complete.
\end{proof}

We can now complete the proof of the necessity part of Theorem 
\ref{thm:equality}. In the proof, we use some nontrivial facts that do not 
appear elsewhere in this note.

\begin{proof}[Proof of Theorem \ref{thm:equality}]
We first prove sufficiency. Suppose that $K=C_1+\ldots+C_m$ for
bodies $C_1,\ldots,C_m$ with $\dim(C_1)+\cdots+\dim(C_m)=n$, and
that $L$ and $L':=a_1C_1+\cdots+a_mC_m$ have the same supporting 
hyperplanes in all $1$-extreme normal directions of $K$. The latter
implies by \cite[Theorem 4.5.3 and Lemma 7.6.15]{Sch14} that
\begin{equation}
\label{eq:support}
	h_L(x)=h_{L'}(x)\quad\mbox{for all }x\in\supp S_{M,K,\ldots,K}
\end{equation}
for any convex body $M$. In particular, every term in 
\eqref{eq:locallogbm} is unchanged if we replace $L$ by $L'$.
Thus equality holds in \eqref{eq:locallogbm} by Lemma \ref{lem:eqbasic}.

We now prove necessity. Suppose equality holds in 
\eqref{eq:locallogbm}. Then Proposition \ref{prop:eqnecess} provides 
$C_1,\ldots,C_m$ that satisfy all the required properties by construction 
except the last one: that is, what remains to be shown is that $L$ and 
$L':=a_1C_1+\cdots+a_mC_m$ have the same supporting hyperplanes in all 
$1$-extreme normal directions of $K$.

Let us write $f:=h_{L}-h_{L'}$. By Proposition \ref{prop:eqnecess}, we 
have $f=0$ on $\supp S_{K,\ldots,K}$. Moreover, as we clearly have
$L'+C = (\max_k a_k)K$ for a convex body $C$, it follows that
$\supp S_{L',K,\ldots,K}\subseteq \supp S_{K,\ldots,K}$ and thus
$f=0$ on $\supp S_{L',K,\ldots,K}$ as well. Substituting $h_L=h_{L'}+f$ 
into \eqref{eq:locallogbm} and using that both $L$ and $L'$ yield equality in
\eqref{eq:locallogbm} (by assumption and by Lemma \ref{lem:eqbasic}, 
respectively), we can readily compute
$$
	\V(f,K,\ldots,K)=0,\qquad
	\V(f,f,K,\ldots,K) = 0.
$$
Using that $f=h_{L}-h_{L'}$, this implies that we have equality
$$
	\V(L,L',K,\ldots,K)^2 = \V(L,L,K,\ldots,K)\,
	\V(L',L',K,\ldots,K)
$$
in Minkowski's quadratic inequality. By the main result of 
\cite{SvH22}, it follows that $L$ and $aL'+v$ have the same supporting
hyperplanes in all $1$-extreme normal directions of $K$ for some
$a\ge 0$, $v\in\mathbb{R}^n$. But as $L,L'$ are symmetric we must
have $v=0$, while $\V(f,K,\ldots,K)=0$ and \eqref{eq:support} imply $a=1$. 
This concludes the proof.
\end{proof}

\section{Implications}
\label{sec:impl}

As we recalled in Theorem \ref{thm:logequiv}, the validity of the local 
logarithmic Brunn-Minkowski inequality \eqref{eq:locallogbm} for 
\emph{all} $K\in\mathcal{K}^n_s$ is equivalent to the validity of the 
logarithmic Brunn-Minkowski and the logarithmic Minkowski inequalities. 
The proof of these facts is based on several recent deep results on 
uniqueness in the $L^q$-Minkowski problem for $q<1$. While this 
equivalence does not hold for fixed $K\in\mathcal{K}^n_s$, it is explained 
in \cite[\S 2.4]{Mil21} that the theory behind Theorem \ref{thm:logequiv} 
still yields nontrivial implications when \eqref{eq:locallogbm} is known 
to hold in a sufficiently rich sub-class of $\mathcal{K}^n_s$. The aim of 
the final section of this note is to investigate what conclusions may be 
drawn by combining these results with Theorems 
\ref{thm:main}--\ref{thm:equality}.

We begin with the proof of Corollary \ref{cor:main}.

\begin{proof}[Proof of Corollary \ref{cor:main}]
By a routine approximation argument as in the proof of Theorem 
\ref{thm:main}, it suffices to prove the validity of \eqref{eq:logmink}
for $K\in\mathcal{K}_s^n$ that are zonoids of class $C^\infty_+$. Let us 
fix such a zonoid, and let $\mathcal{F}=\{(1-t)K+tB:t\in[0,1]\}$ where
$B$ is the Euclidean unit ball. Then every $K'\in\mathcal{F}$ is a zonoid
of class $C^\infty_+$. Moreover, it was observed in Remark 
\ref{rem:protoequality} that every even eigenfunction of 
$\mathscr{A}_{K'}$ that is orthogonal to $h_{K'}$ has eigenvalue
$\lambda < -\frac{1}{n-1}$. By the continuity of the eigenvalues
of the Hilbert operator (cf. \cite[Theorem 5.3]{KM17}), there exists
$\varepsilon>0$ so that for every $K'\in\mathcal{F}$, every even 
eigenfunction of 
$\mathscr{A}_{K'}$ that is orthogonal to $h_{K'}$ has eigenvalue 
$\lambda \le -\frac{1}{n-1}-\varepsilon$. Thus there exists $p<0$ so that
condition (4) of \cite[Theorem 2.1]{Mil21} holds for 
all $K'\in\mathcal{F}$. The conclusion now follows from the
implication (4)$\Rightarrow$(3b) of \cite[Theorem 2.1]{Mil21} (as
the inequality in (3b) with $q=0$ is precisely \eqref{eq:logmink}).
\end{proof}

The logarithmic Brunn-Minkowski conjecture is intimately connected to
the uniqueness problem for cone volume measures; this was in fact the
original motivation for the formulation of the conjecture \cite{BLYZ12}.
Let us recall the definition.

\begin{defn}
The \emph{cone volume measure} $V_K$ of a convex body $K$ is
defined as
$$
	dV_K := \frac{1}{n}\,h_K\, dS_{K,\ldots,K}.
$$
\end{defn}

The basic question that arises here is whether the cone volume measure 
uniquely characterizes the convex body $K$. While this is not always the 
case, the question is closely connected to the equality cases of the 
logarithmic Minkowski inequality \eqref{eq:logmink} in the case that
$K,L\in\mathcal{K}_s^n$ are symmetric. For example, if 
$K,L\in\mathcal{K}_s^n$ satisfy $V_K=V_L$ (and thus \emph{a fortiori}
$\Vol(K)=\Vol(L)$ as $\Vol(K)=\int dV_K$), the validity of the logarithmic 
Brunn-Minkowski conjecture would yield
$$
	0\le \int h_K \log\bigg(\frac{h_L}{h_K}\bigg)\,dS_{K,\ldots,K}
	= \int h_L \log\bigg(\frac{h_L}{h_K}\bigg)\,dS_{L,\ldots,L}
	\le 0
$$
using \eqref{eq:logmink} in the first inequality, $V_K=V_L$ in the
equality, and \eqref{eq:logmink} with the roles of $K,L$ reversed in the 
second inequality. This would imply that $V_K$ is uniquely determined by 
$K$ whenever \eqref{eq:logmink} does not admit nontrivial equality 
cases.

Unfortunately, even though we obtained a complete characterization of the 
equality cases of \eqref{eq:locallogbm} when $K$ is a zonoid, this 
information is lost in Corollary \ref{cor:main}. The reason is that the 
proof of Corollary \ref{cor:main} required approximation of $K$ by smooth 
bodies, which destroys the nontrivial equality cases. Nonetheless, for 
sufficiently smooth zonoids, uniqueness of cone volume measures
follows by \cite[Theorem 2.1]{Mil21}. Note that while the smoothness 
assumption on $K$ is restrictive, the following statement requires neither 
that $L$ is smooth nor that $L$ is a zonoid.

\begin{cor}
Let $K\in\mathcal{K}^n_s$ be a zonoid of class $C^3_+$.
Then for any $L\in\mathcal{K}^n_s$, we have $V_K=V_L$ if and only if 
$K=L$.
\end{cor}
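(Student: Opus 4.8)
The plan is to follow the argument sketched in the paragraph preceding the statement, using the logarithmic Minkowski inequality from Corollary~\ref{cor:main} together with its \emph{equality cases}, which we can extract from Theorem~\ref{thm:equality} because $K$ is smooth. First I would observe that if $V_K = V_L$ then in particular $\Vol(K) = \int dV_K = \int dV_L = \Vol(L)$. Then, applying \eqref{eq:logmink} with $K$ as the reference body and $L$ as the variable body, using $V_K = V_L$ to rewrite $\int h_K \log(h_L/h_K)\,dS_{K,\ldots,K} = \int h_L \log(h_L/h_K)\,dV_K / \text{(a positive constant)}$— more precisely, using $dV_K = \frac1n h_K\,dS_{K,\ldots,K}$ so that $\int h_K \log(h_L/h_K)\,dS_{K,\ldots,K} = n\int \log(h_L/h_K)\,dV_K = n\int \log(h_L/h_K)\,dV_L = \int h_L\log(h_L/h_K)\,dS_{L,\ldots,L}$, and then applying \eqref{eq:logmink} again with the roles of $K$ and $L$ reversed (noting $L\in\mathcal{K}^n_s$ so the inequality applies to it only if $L$ is itself a zonoid—here is the subtlety). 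Since $\Vol(K)=\Vol(L)$, the right-hand sides of both applications of \eqref{eq:logmink} vanish, forcing the common integral to be zero, i.e.\ equality in \eqref{eq:logmink}.

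The main obstacle is that Corollary~\ref{cor:main} only provides \eqref{eq:logmink} with a zonoid as the \emph{reference} body, whereas the symmetric chain above wants to use \eqref{eq:logmink} with $L$ as reference; and we do not know $L$ is a zonoid. The resolution is to avoid the symmetric argument and instead work directly at the level of the local inequality \eqref{eq:locallogbm}, which is what Theorem~\ref{thm:logequiv} links to \eqref{eq:logmink}. The cleaner route: since $K$ is of class $C^3_+$, the cone volume measure $dV_K = \frac1n h_K\,dS_{K,\ldots,K} = \frac1n h_K \D(D^2h_K,\ldots,D^2h_K)\,d\omega$ has a positive smooth density, so $V_K = V_L$ is a Monge–Ampère-type equation for $h_L$ with $K$ as data. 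One then invokes the uniqueness theory for the $L^q$-Minkowski problem at $q=0$: the argument in the proof of Corollary~\ref{cor:main} already established that for every $K'$ in the segment $\mathcal{F} = \{(1-t)K+tB : t\in[0,1]\}$, every even eigenfunction of $\mathscr{A}_{K'}$ orthogonal to $h_{K'}$ has eigenvalue $\le -\frac{1}{n-1}-\varepsilon$, so condition~(4) of \cite[Theorem~2.1]{Mil21} holds along $\mathcal{F}$ with some $p<0$. I would then appeal to the uniqueness implication in \cite[Theorem~2.1]{Mil21} (the same theorem whose implication (4)$\Rightarrow$(3b) gave Corollary~\ref{cor:main}) — specifically the statement that condition~(4) along such a family implies uniqueness in the $L^0$-Minkowski problem, i.e.\ that $V_K = V_L$ with $L\in\mathcal{K}^n_s$ forces $K=L$. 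The $C^3_+$ hypothesis is exactly the regularity needed to make the relevant eigenvalue-continuity argument and the cited uniqueness result applicable.

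Concretely, the steps in order: (i) reduce to showing $V_K = V_L \Rightarrow K=L$, the reverse implication being trivial; (ii) record that $\Vol(K)=\Vol(L)$; (iii) recall from the proof of Corollary~\ref{cor:main} that the spectral gap condition of Lemma~\ref{lem:km} holds with room to spare uniformly along $\mathcal{F}=\{(1-t)K+tB\}$, hence \cite[Theorem~2.1(4)]{Mil21} applies with some $p<0$; (iv) quote the uniqueness conclusion of \cite[Theorem~2.1]{Mil21}, which states that validity of condition~(4) along the path $\mathcal{F}$ from $K$ to the ball implies that the $L^0$-Minkowski problem with data $V_K$ has $K$ as its unique symmetric solution; (v) conclude $K=L$. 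I expect step~(iv) — correctly citing and applying the precise uniqueness statement of \cite[Theorem~2.1]{Mil21}, as opposed to its Minkowski-inequality consequence used in Corollary~\ref{cor:main} — to be the crux; everything else is bookkeeping with the cone volume measure and homogeneity.
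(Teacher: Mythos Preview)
Your proposal is correct and takes essentially the same approach as the paper: after discarding the symmetric-chain argument (which indeed fails because $L$ need not be a zonoid), you arrive at exactly what the paper does, namely invoke the implication (4)$\Rightarrow$(1) of \cite[Theorem~2.1]{Mil21} using the same spectral-gap-along-$\mathcal{F}$ input that was established in the proof of Corollary~\ref{cor:main}. The paper's proof is a one-liner citing precisely this implication; your steps (iii)--(v) are that one-liner unpacked, and your remark that the $C^3_+$ hypothesis is there to make the eigenvalue-continuity and Milman's uniqueness theorem applicable is on point.
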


\begin{proof}
This follows from the
implication (4)$\Rightarrow$(1) of \cite[Theorem 2.1]{Mil21} by precisely 
the same argument as in the proof of Corollary \ref{cor:main}.
\end{proof}

\subsection*{Acknowledgments}

The results of this note were developed as a pedagogical example for the 
author's lectures at the Spring School on Convex Geometry and Random 
Matrices in High Dimension, Paris, June 2021. The author is grateful to 
M.\ Fradelizi, N.\ Gozlan, and O.\ Gu\'edon for the invitation to present 
these lectures. The author also thanks K.\ B\"or\"oczky, E.\ Milman, and 
G.\ Paouris for helpful discussions, and E.\ Milman and the anonymous 
referee for helpful suggestions on the presentation.
This work was supported in part by NSF grants DMS-1811735 
and DMS-2054565, and by the Simons Collaboration on Algorithms \& 
Geometry.

\bibliographystyle{abbrv}
\bibliography{ref}

\end{document}